\newtheorem{thm}{Theorem}[section]
\newtheorem{cor}[thm]{Corollary}
\newtheorem{defn}[thm]{Definition}
\newtheorem{lem}[thm]{Lemma}
\newtheorem{prop}[thm]{Proposition}
\newtheorem{rem}[thm]{Remark}
\numberwithin{equation}{section}
\renewcommand{\H}{{\mathcal{H}}}
\renewcommand{\O}{{\mathcal{O}}}
  \newcommand{\U}{{\mathcal{U}}}
\newcommand{\rank}{\operatorname{rank}}
\newcommand{\spann}{\operatorname{span}}
\newcommand{\Iso}{\operatorname{Isom}}
\newcommand{\reg}{\operatorname{Reg}}
\newcommand{\sreg}{\operatorname{Str}}
\newcommand{\creg}{\operatorname{Com}}
\newcommand{\csreg}{\operatorname{ComStr}}
\newcommand{\grig}{\operatorname{GRig}}
\newcommand{\Isolin}{\operatorname{Isom^{Lin}}}
\tikzstyle{vertex}=[circle, draw, fill=black, inner sep=0pt, minimum size=3pt]
\tikzstyle{fadedvertex}=[vertex, fill=black!30]
\tikzstyle{edge}=[line width=1pt]
\tikzstyle{fadededge}=[edge,color=black!30]
\begin{document}

\title[Generalised rigid body motions in non-Euclidean planes]{Generalised rigid body motions in non-Euclidean planes with applications to global rigidity}
\author[Sean Dewar]{Sean Dewar}
\address{Johann Radon Institute\\ Altenberger Strasse 69\\ 4040\\ Linz\\
Austria}
\email{sean.dewar@ricam.oeaw.ac.at}
\author[Anthony Nixon]{Anthony Nixon}
\address{Dept.\ Math.\ Stats.\\ Lancaster University\\
Lancaster LA1 4YF \\U.K.}
\email{a.nixon@lancaster.ac.uk}
\thanks{2010 {\it  Mathematics Subject Classification.}
52C25, 05C10, 52A21\\
Key words and phrases: bar-joint framework, global rigidity, non-Euclidean framework, 
analytic normed spaces}

\begin{abstract}
A bar-joint framework $(G,p)$ in a (non-Euclidean) real normed plane $X$ is the combination of a finite, simple graph $G$ and a placement $p$ of the vertices in $X$. A framework $(G,p)$ is globally rigid in $X$ if every other framework $(G,q)$ in $X$ with the same edge lengths as $(G,p)$ arises from an isometry of $X$. The weaker property of local rigidity in normed planes (where only $(G,q)$ within a neighbourhood of $(G,p)$ are considered) has been studied by several researchers over the last 5 years after being introduced by Kitson and Power for $\ell_p$-norms. However global rigidity is an unexplored area for general normed spaces, despite being intensely studied in the Euclidean context by many groups over the last 40 years. In order to understand global rigidity in $X$, we introduce new generalised rigid body motions in normed planes where the norm is determined by an analytic function. This theory allows us to deduce several geometric and combinatorial results concerning the global rigidity of bar-joint frameworks in $X$.
\end{abstract}

\date{\today}
\maketitle

\setcounter{tocdepth}{1}
\tableofcontents

\section{Introduction}

Everyone understands rotations and reflections in Euclidean spaces. However under non-Euclidean norms these become subtle notions since lengths depend on direction. Cook, Lovett and Morgan \cite{clm} studied rotations in normed planes, and in particular observed that a cross braced rhombus cannot be fully rotated. Motivated both by their work and by potential applications to the study of global rigidity we develop generalised rigid body motions in non-Euclidean (typically analytic) normed planes.

A bar-joint framework $(G,p)$ in $\mathbb{R}^d$ is the combination of a graph $G=(V,E)$ and a map $p:V\rightarrow \mathbb{R}^d$. A fundamental question in the field of rigidity theory is when a given framework $(G,p)$ is unique. That is, that every other framework $(G,q)$ in the same dimension and on the same graph arises from $(G,p)$ by an isometry of $\mathbb{R}^d$. This is the global rigidity problem (see \cite{con,C2005,GHT,hendrickson,J&J} inter alia). We will use our results on generalised rigid body motions to initiate a study of global rigidity in normed planes. 

Some of our results will apply in general normed planes, however our main results will apply in the setting of analytic normed planes. A (finite dimensional real) normed space $X$ is \emph{analytic} if the restriction of the norm of $X$ to $X \setminus \{0\}$ is an analytic function.
Any such normed space is \emph{smooth} (i.e.~the norm is differentiable at each point in $X \setminus \{0\}$) and \emph{strictly convex} (i.e.~$\|x + y\|<\|x\|+\|y\|$ for all linearly independent $x,y\in X$). In this article we will always assume our normed spaces are non-Euclidean unless explicitly stated otherwise.

While global rigidity in normed planes seems to be an unexplored area of research, the situation for local rigidity (where the uniqueness of $(G,p)$ is only required in a neighbourhood of $p$) is different. This problem was first considered by  Kitson and Power \cite{kit-pow-1} who began a research program in this area by proving a combinatorial description of when `most' frameworks $(G,p)$ in the normed plane $\ell_q^2$ ($1<q<\infty$ and $q\neq 2$) are rigid. Note that this generalised the result for the Euclidean case obtained by Pollaczek-Geiringer \cite{PG} and popularised by Laman \cite{laman}. Kitson and Power's work led, for example, to Dewar's analysis of more general normed planes \cite{dew2,dewphd,D19} and Levene and Kitson's extension to matrix norms \cite{LK}.

Additionally, a number of papers have considered the similar sounding problem of rigidity and global rigidity under non-Euclidean metrics, such as hyperbolic and Minkowski metrics \cite{GTfields,NW, SalW}. However these contexts are sufficiently similar to the Euclidean case that they invite the same combinatorial techniques and at the level of infinitesimal rigidity there is an elegant projective invariance \cite{NSW,SalW}. In the case of normed planes these conveniences are not available and the nature of the graphs that are typically rigid or globally rigid changes markedly. Moreover since we no longer have quadratic constraints we also do not have the luxury of having an obviously accessible equilibrium stress matrix approach (as in \cite{con,C2005,GHT}). Instead we will use our newly gained insight into geometry in analytic normed planes through generalised rigid body motions to deduce results on global rigidity. In particular we will show that every $k$-lateration graph (on at least 5 vertices and with $k\geq 3$) is globally rigid in any analytic normed plane. We also explicitly construct globally rigid graphs on $n$ vertices in analytic normed planes, for all $n\geq 6$, that are not globally rigid in the Euclidean case.

We conclude the introduction with a brief outline of what follows. In Section \ref{sec:prelim} we introduce the analytic background we shall use. Then Section \ref{sec:rigid} recaps and extends the theory of rigidity in normed spaces. In Section \ref{sec:reflect} we study reflections and rotations in normed planes in detail obtaining our key technical results on rigid body motions. Some of the more technical proofs are deferred to Section \ref{sec:tech} after we deduce our results on globally rigid graphs in Section \ref{sec:global}. We finish in Section \ref{sec:conc} with some avenues for further exploration.

\section{Background}
\label{sec:prelim}

\subsection{Negligible sets}

The concept of a set being ``negligible'' can be described in many (often non-equivalent) ways. There are two important properties that negligible sets should satisfy; (i) the complement of the set is dense, and (ii) the countable union of negligible sets will remain negligible.
There are two types of negligible sets we shall deal with throughout.

\begin{defn}
    A set $A \subset \mathbb{R}^d$ is a \emph{null set} if its Lebesgue measure is 0.
    A set $B \subset \mathbb{R}^d$ is a \emph{meagre set} if it is the countable union of nowhere dense\footnote{Recall that a set is \emph{nowhere dense} if its closure has empty interior.} sets.
    If $S$ is a subset of an open set $O \subset \mathbb{R}^d$,
    then $S$ is a \emph{conull} (respectively, \emph{comeagre}) subset of $O$ if $O \setminus S$ is a null (respectively, meagre) set.
    (If the set $O$ is well understood, we will simply refer to a set being conull/comeagre.)
\end{defn}

Both null sets and meagre sets satisfy the two desired properties for negligible sets, however the two concepts are distinct.
Indeed we can construct sets that are null and comeagre (and hence also sets that are conull and meagre).
We show this with the following subset of $\mathbb{R}$.
Label the rational numbers $x_1,x_2,\ldots$ and define the open intervals $U_i(n)=(x_i - \frac{2^{-i}}{n}, x_i + \frac{2^{-i}}{n})$.
We now define the sets  $V_n := \bigcup_{i \in \mathbb{N}} U_i(n)$ and their intersection $V := \bigcap_{n \in \mathbb{N}} V_n$.
The set $V$ is a null set as each set $V_n$ has Lebesgue measure $2/n$,
but it is also comeagre since each set $V_n$ is also an open dense set.
Hence $V$ is null and comeagre, and $\mathbb{R} \setminus V$ is conull and meagre.

When the set is also restricted to be open/closed, we do obtain a hierarchy.
This follows since every closed null set is meagre, and every open conull set is comeagre (as every open dense set is comeagre). Also, the only open null/meagre set is the empty set, and the only closed conull/comeagre subset of an open set is itself.

As all (finite dimensional real) normed spaces are isomorphic to $\mathbb{R}^d$,
we can define (co)meagre sets and (co)null sets for any normed space.

\subsection{Real analytic functions}

For every $k = (k_1,\ldots,k_d) \in (\mathbb{N}\cup\{0\})^d$ and every $x = (x_1,\ldots,x_d) \in \mathbb{R}^d$ we define $x^k := x_1^{k_1} \cdot \ldots \cdot x_d^{k_d}$.
Let $U \subset \mathbb{R}^d$ be open and let $\|\cdot\|_2$ be the standard Euclidean norm for $\mathbb{R}^d$.
A function $f:U \rightarrow \mathbb{R}$ is \emph{analytic} if, for every $z \in U$, there exists $r > 0$ and $a_k \in \mathbb{R}$ for each $k \in (\mathbb{N}\cup\{0\})^d$, such that if $\|x-z\|_2 <r$ then; the point $x$ lies in $U$, the series $\sum_{k \in (\mathbb{N}\cup\{0\})^d} a_k(x - z)^k$ is absolutely convergent, and
\begin{align*}
    f(x) := \sum_{k \in (\mathbb{N}\cup\{0\})^d} a_k(x - z)^k.
\end{align*}
A map $g: U \subseteq \mathbb{R}^d \rightarrow \mathbb{R}^n$ is \emph{analytic} if $U$ is open and each coordinate function $g_1,\ldots,g_n$ is an analytic function.
It can be quickly checked that every analytic function/map is smooth,
and every derivative will also be analytic.

We recall that for a map $f:A \rightarrow B$ and sets $U \subset A$ and $V \supset f(A)$,
we denote the map formed from $f$ by restricting the domain to $U$ by $f|_U$, the map formed from $f$ by restricting the codomain to $V$ by $f|^V$, and the map formed from $f$ by restricting both the domain and codomain by $f|_U^V$.
An analytic map $f: U \rightarrow V$ between open sets $U,V \subset \mathbb{R}^d$ is called an \emph{analytic diffeomorphism} if it has an inverse that is also an analytic map.
An analytic map $f: U \rightarrow V$ is called a \emph{local analytic diffeomorphism} if for every point $x \in U$ there exists an open neighbourhood $U' \subset U$ and an open neighbourhood $V'$ of $f(x)$ where the map $f|_{U'}^{V'}$ is an analytic diffeomorphism.
We can detect (local) analytic diffeomorphisms via rank of the derivative with the following result.

\begin{thm}\cite[Theorem 1.8.1]{analytic}
\label{thm:imt}
	Let $U,V$ be open subsets of $\mathbb{R}^d$.
	If $f:U \rightarrow V$ is an analytic map and $\rank df(x) = d$ for some $x \in U$,
	then there exists a neighbourhood $U' \subset U$ of $x$ so that $f|_{U'}$ is a local analytic diffeomorphism.
	Furthermore,
	if $f$ is injective and $\rank df(x) = d$ for all $x \in U$, $f$ is an analytic diffeomorphism.
\end{thm}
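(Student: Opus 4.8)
The plan is to deduce this from the classical ($C^\infty$) inverse function theorem and then upgrade the resulting smooth local inverse to an analytic one. Since $f$ is analytic it is smooth, and the hypothesis $\rank df(x) = d$ means the Jacobian matrix $df(x)$ is invertible. The smooth inverse function theorem then supplies open neighbourhoods $U' \subset U$ of $x$ and $V'$ of $f(x)$ such that $f|_{U'}^{V'}$ is a bijection whose inverse $g := (f|_{U'}^{V'})^{-1}$ is $C^\infty$. The substance of the first assertion is therefore not the existence of a smooth inverse but the claim that $g$ is in fact analytic.

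To establish analyticity of $g$, I would complexify. Near $x$ the map $f$ is given by an absolutely convergent real power series, and this series converges on a polydisc in $\mathbb{C}^d$ to a holomorphic extension $\tilde f$ of $f$. The complex Jacobian $d\tilde f$ agrees with $df$ at real points, so $d\tilde f(x)$ is invertible; applying the holomorphic inverse function theorem yields a holomorphic inverse $\tilde g$ defined on a neighbourhood of $f(x)$ in $\mathbb{C}^d$. Restricting $\tilde g$ to a real neighbourhood of $f(x)$ produces a real analytic two-sided inverse of $f$, which by uniqueness of inverses must coincide with $g$ on a possibly smaller neighbourhood. Hence $g$ is analytic and $f|_{U'}$ is a local analytic diffeomorphism, proving the first assertion. (Alternatively, one could avoid complexification and bound the coefficients of the formally inverted power series directly by Cauchy's method of majorants, but the complex-analytic route is cleaner.)

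For the second assertion, suppose $f$ is injective and $\rank df(x) = d$ for every $x \in U$. By the first part, every point of $U$ has a neighbourhood on which $f$ is a local analytic diffeomorphism; in particular $f$ is an open map, so $f(U)$ is open and each restriction $f|_{U'}$ is a homeomorphism onto its image. Combined with global injectivity, this shows $f : U \to f(U)$ is a continuous open bijection, hence a homeomorphism. Its inverse is locally analytic because it agrees locally with the analytic local inverses produced above, and since inverses are unique these local pieces patch together to a single analytic map $g : f(U) \to U$. Thus $f$ is an analytic diffeomorphism onto its (open) image.

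The main obstacle is the analyticity upgrade in the second paragraph: the smooth inverse function theorem only guarantees a $C^\infty$ inverse, and promoting this to analyticity is exactly the point where one must exploit the analytic (as opposed to merely smooth) hypothesis, either through holomorphic extension or majorant estimates.
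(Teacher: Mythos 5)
The paper does not prove this statement: it is quoted (as Theorem 1.8.1) from Krantz and Parks' \emph{A primer of real analytic functions}, so there is no in-paper argument to compare against. Your proof is correct and follows the standard route --- the $C^\infty$ inverse function theorem supplies a smooth local inverse, and complexification together with the holomorphic inverse function theorem (or, equivalently, a majorant estimate on the formally inverted series) upgrades it to an analytic one; identifying the restricted holomorphic inverse with the real smooth inverse correctly disposes of the question of why that restriction is real-valued, and the patching argument for the global statement is sound. The only cosmetic caveat concerns the second assertion: under the paper's definition an analytic diffeomorphism $f : U \rightarrow V$ must be surjective onto $V$, so the honest conclusion (which is the one you actually draw) is that $f$ is an analytic diffeomorphism onto its open image $f(U)$.
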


By using charts where the transition maps are analytic diffeomorphisms,
we can construct \emph{real analytic manifolds} in the usual way;
see for example \cite{manifold}.

Analytic functions generalise polynomials and the zero sets of analytic functions (denoted throughout by $Z(f)$) share many of the properties of algebraic sets.
The following result is folklore,
however we direct the interested reader to \cite{Mityagin15} for a simple proof.

\begin{prop}\label{p:anvar}
	Let $U \subset \mathbb{R}^d$ be an open connected subset and $f:U \rightarrow \mathbb{R}$ be analytic.
	If the zero set $Z(f)$ is not equal to $U$ then it is a closed null subset of $U$.
\end{prop}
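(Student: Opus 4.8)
The plan is to prove two things: that $Z(f)$ is closed, and that it is a null set, under the hypothesis that $Z(f) \neq U$. The closedness is immediate: since $f$ is analytic it is continuous, so $Z(f) = f^{-1}(\{0\})$ is the preimage of a closed set and hence closed in $U$. The substantive content is that $Z(f)$ has Lebesgue measure zero, and for this the connectedness of $U$ is essential --- without it $f$ could vanish on an entire connected component.

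\medskip

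\noindent The key structural fact I would invoke is the \emph{identity theorem} for real analytic functions: if an analytic function on a connected open set vanishes on a set with nonempty interior (equivalently, if all its partial derivatives of every order vanish at some point), then it vanishes identically on $U$. First I would show that the set $S$ of points $z \in U$ at which \emph{every} partial derivative of $f$ vanishes is both open and closed in $U$. It is closed because it is the intersection over all multi-indices $k$ of the zero sets of the (continuous) partial derivatives $\partial^k f$. It is open because, near any such $z$, the power series expansion of $f$ has all coefficients $a_k = 0$, so $f \equiv 0$ on a neighbourhood of $z$, and in particular all derivatives vanish there too. Since $U$ is connected, $S$ is either empty or all of $U$. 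If $S = U$ then $f \equiv 0$, contradicting $Z(f) \neq U$; hence $S = \emptyset$.

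\medskip

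\noindent It remains to deduce that $Z(f)$ is null. The idea is to work locally and induct on the dimension $d$. Fix a point $z \in Z(f)$; since $z \notin S$, some partial derivative of $f$ is nonzero at $z$, and by choosing a coordinate direction we may assume (after a linear change of variables, which preserves null sets) that $\partial f / \partial x_d$ is nonzero at $z$, or more carefully that some pure derivative $\partial^m f / \partial x_d^m$ is the first nonvanishing one in that direction. The crux of the argument is then a slicing estimate: for each fixed value of the first $d-1$ coordinates, the function $t \mapsto f(x_1,\dots,x_{d-1},t)$ is a one-variable analytic function that is not identically zero (for almost every slice), and by the one-dimensional identity theorem its zero set in $t$ is discrete, hence null in $\mathbb{R}$. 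Applying Fubini's theorem to the indicator of $Z(f)$ then shows that $Z(f) \cap N$ is null for a neighbourhood $N$ of $z$. Finally, since $U \subset \mathbb{R}^d$ is second countable, $Z(f)$ is covered by countably many such neighbourhoods, and a countable union of null sets is null, giving the result.

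\medskip

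\noindent The main obstacle I anticipate is making the slicing argument uniform: one must ensure that the slices on which $t \mapsto f(\cdot,t)$ vanishes identically themselves form a null set in $\mathbb{R}^{d-1}$, which is exactly where the inductive hypothesis (applied to a suitable lower-order derivative of $f$ that does not vanish identically) enters. Organising this induction cleanly --- rather than appealing directly to the cited reference \cite{Mityagin15} --- is the delicate part, though as the paper notes this is folklore and one could instead simply quote \cite{Mityagin15} for the measure-zero conclusion and supply only the short closedness argument.
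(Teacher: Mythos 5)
Your argument is correct. The paper does not actually prove this proposition --- it labels it folklore and defers to \cite{Mityagin15} --- and your sketch (closedness from continuity; the open--closed identity-theorem argument showing that $f$ cannot vanish on any open subset; then induction on $d$ via Fubini slicing over a small cube, with the inductive hypothesis applied to a suitable restricted $x_d$-derivative to show the set of slices on which $f$ vanishes identically is null) is essentially the standard proof found in that reference.
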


We can improve upon Proposition \ref{p:anvar} significantly for non-zero analytic functions with a 1-dimensional domain by showing that they must have a locally finite zero set.

\begin{prop}\cite[Corollary 1.2.6]{analytic}\label{p:analytic1d}
	Let $U$ be an open interval and $f:U \rightarrow \mathbb{R}$ an analytic function.
	If there exists a convergent sequence $(x_n)_{n \in \mathbb{N}}$ in $U$ where $f(x_n)=0$ for all $n \in \mathbb{N}$ and $\lim_{n \rightarrow \infty} x_n \in U$,
	then $Z(f)=U$.
\end{prop}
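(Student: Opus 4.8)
The plan is to establish the one-variable real analytic identity theorem: vanishing on a set with an accumulation point in $U$ forces $f \equiv 0$. Write $L := \lim_{n \to \infty} x_n \in U$. By passing to a subsequence we may assume the $x_n$ are pairwise distinct (an eventually constant sequence being the degenerate case with no real content), and after discarding at most one term we may assume $x_n \neq L$ for all $n$; thus $L$ is a genuine accumulation point of the zero set $Z(f)$. Continuity of $f$ already gives $f(L) = 0$.

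The first main step is to show that \emph{every} Taylor coefficient of $f$ at $L$ vanishes. Using analyticity, expand $f(x) = \sum_{k=0}^\infty a_k (x-L)^k$ on some interval $(L-r, L+r) \subset U$. Suppose for contradiction that not all $a_k$ are zero, and let $m$ be the least index with $a_m \neq 0$. Then on $(L-r,L+r)$ we may factor $f(x) = (x-L)^m g(x)$, where $g(x) := \sum_{k=0}^\infty a_{m+k}(x-L)^k$ is again analytic with $g(L) = a_m \neq 0$. By continuity, $g$ is nonzero on a smaller neighbourhood of $L$; but $x_n \to L$ with $x_n \neq L$ then forces $f(x_n) = (x_n - L)^m g(x_n) \neq 0$ for all large $n$, contradicting $f(x_n)=0$. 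Hence $a_k = 0$ for all $k$, so that $f$ vanishes identically on $(L-r, L+r)$.

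The second step promotes this local vanishing to all of $U$ by a connectedness argument. Consider the set
\begin{align*}
    S := \{\, x \in U : f^{(k)}(x) = 0 \text{ for all } k \geq 0 \,\}.
\end{align*}
It is nonempty, since all Taylor coefficients of $f$ at $L$ vanish and hence $L \in S$. It is closed in $U$, being the intersection over $k$ of the sets $Z(f^{(k)})$, each closed because $f^{(k)}$ is continuous. It is also open: if $x_0 \in S$, then the Taylor coefficients of $f$ at $x_0$ are $f^{(k)}(x_0)/k! = 0$, so $f$ vanishes on a neighbourhood of $x_0$, and therefore so do all its derivatives, placing that whole neighbourhood inside $S$. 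Since $U$ is an interval it is connected, so the nonempty clopen subset $S$ must equal $U$. Therefore $f \equiv 0$ on $U$, that is, $Z(f) = U$.

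I expect the only delicate point to be the order-of-vanishing argument in the second paragraph: the contradiction relies on $L$ being a limit of zeros genuinely \emph{distinct} from $L$, which is exactly what the subsequence reduction in the first paragraph secures. Once all Taylor coefficients at $L$ are shown to vanish, the clopen-set connectedness step is routine.
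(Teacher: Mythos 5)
Your proof is correct. Note, though, that the paper does not actually prove this proposition: it is quoted verbatim from Krantz--Parks \cite[Corollary 1.2.6]{analytic}, so there is no in-paper argument to compare against. Your argument is the standard one-variable identity theorem proof --- extract the lowest non-vanishing Taylor coefficient at the limit point $L$ to contradict the existence of zeros $x_n\to L$ with $x_n\neq L$, then propagate the local vanishing to all of $U$ via the clopen set $S=\bigcap_k Z(f^{(k)})$ and connectedness --- which is essentially the route taken in the cited reference, so nothing is genuinely different in approach.

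One small point worth making explicit: as literally stated, the proposition is false for eventually constant sequences (take $f(x)=x$ on $(-1,1)$ and $x_n=0$ for all $n$); the intended hypothesis is that $Z(f)$ has an accumulation point in $U$, which is how the result is used later (e.g.\ in Proposition \ref{p:anvar2} and Lemma \ref{l:bandiff}, to conclude that a non-zero analytic function on an interval has locally finite zero set). Your parenthetical remark identifies this correctly, and your subsequence reduction proves exactly the accumulation-point version; just be aware that you are (rightly) proving the corrected statement rather than the literal one.
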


We can also state a similar result for $d=2$, although we will first need to lay out some terminology.
We will use $\mathbb{T}$ to denote the circle group,
i.e. $\mathbb{T} = \mathbb{R}/ 2\pi \mathbb{Z}$, represented by the set $[0,2\pi)$.
To define the topology we will use the metric $d_{\mathbb{T}}$ with $d_{\mathbb{T}}(s,t) := \min\{ |s-t| , 2\pi - |s-t| \}$.
We also recall that a map between any two topological spaces is \emph{proper} if the preimage of any compact set is compact.

\begin{prop}\label{p:anvar2}
	Let $U \subset \mathbb{R}^2$ be an open connected subset and $f:U \rightarrow \mathbb{R}^2$ be analytic.
	Then one of three possibilities holds:
	\begin{enumerate}[(i)]
		\item $Z(f) \in \{ \emptyset ,U \}$.
		\item $Z(f)$ is countable.
		\item There exists a real analytic manifold $M$ that is the disjoint union of countably many copies of the sets $\mathbb{R}$ and $\mathbb{T}$, and there exists an analytic map $\phi : M \rightarrow U$ so that $\phi(M) = Z(f)$.
		Further,
		there exists a discrete set $M' \subset M$ so that $\rank d \phi(x) =1$\footnote{Here $d\phi(x)$ denotes the Jacobian derivative of the map $\phi$ evaluated at $x$.} for all $x \in M \setminus M'$.
	\end{enumerate}
\end{prop}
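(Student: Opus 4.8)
The plan is to first clear the degenerate cases and then run a local-to-global analysis of $Z(f)$, separating the locus where $Z(f)$ is a smooth curve from its remaining (isolated or singular) points. Write $f=(f_1,f_2)$. If $Z(f)\in\{\emptyset,U\}$ we are in case (i); note $Z(f)=U$ exactly when $f_1\equiv f_2\equiv 0$. So assume $\emptyset\neq Z(f)\neq U$; then not both coordinate functions vanish identically, and after relabelling $f_1\not\equiv 0$. By Proposition \ref{p:anvar}, $Z(f_1)$ is a closed null subset of $U$, and since $Z(f)=Z(f_1)\cap Z(f_2)\subseteq Z(f_1)$, the set $Z(f)$ is closed in $U$ with empty interior.

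Next I would isolate the regular part. Fix $z\in Z(f)$ with $\nabla f_1(z)\neq 0$ and choose a linear functional $\ell$ whose (constant) gradient is independent of $\nabla f_1(z)$; then the analytic map $x\mapsto (f_1(x),\ell(x))$ has Jacobian of rank $2$ at $z$, so by Theorem \ref{thm:imt} it is a local analytic diffeomorphism near $z$. Consequently $Z(f_1)$ coincides near $z$ with an analytic arc $\gamma$ whose derivative never vanishes. Restricting $f_2$ to $\gamma$ gives an analytic function of one real variable, so Proposition \ref{p:analytic1d} forces its zero set to be either all of $\gamma$ (locally) or a discrete set. Hence near $z$ the set $Z(f)$ is either a $1$-dimensional analytic submanifold or else $z$ is isolated in $Z(f)$; the symmetric argument applies when $\nabla f_2(z)\neq 0$.

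The heart of the proof, and the step I expect to be hardest, concerns the points of $Z(f)$ at which both gradients vanish. There the inverse function theorem gives nothing, and this locus need not be discrete: already $f_1(x_1,x_2)=x_1^2$ has $df_1$ vanishing along the whole curve $Z(f_1)$. This is the non-reduced phenomenon, and to control it I would pass to the reduced structure, invoking the local resolution theory of real analytic plane curves: after an analytic coordinate change and Weierstrass preparation, each branch of $Z(f_1)$ through such a point admits a real Puiseux parametrisation, namely an analytic arc $t\mapsto\gamma(t)$ (with $d\gamma$ vanishing at only isolated parameter values) whose image is that branch, and only finitely many branches meet any compact subset of $U$. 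Feeding these arcs into the previous step shows that, away from a locally finite (hence discrete and countable) set of singular points, $Z(f)$ is a $1$-dimensional analytic submanifold, and that every non-isolated point of $Z(f)$ lies on an analytic arc contained in $Z(f)$.

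Finally I would assemble the global object. If $Z(f)$ has no non-isolated points it is a closed discrete, hence countable, subset of $U$, giving case (ii). Otherwise I would take the countably many analytic arcs covering the non-isolated part of $Z(f)$ and glue them along their overlaps into a Hausdorff, second-countable real analytic $1$-manifold $M$; since the only connected $1$-manifolds are $\mathbb{R}$ and $\mathbb{T}$, $M$ is a countable disjoint union of copies of these, and the arcs furnish an analytic map $\phi:M\to U$ whose image is the non-isolated part of $Z(f)$ (any remaining isolated points forming a separate countable set to be organised into the conclusion). Declaring $M'$ to be the discrete set of preimages of the singular points then yields $\rank d\phi=1$ on $M\setminus M'$, which is case (iii). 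Besides the Puiseux parametrisation above, the delicate bookkeeping is ensuring the gluing is Hausdorff, so that distinct branches meeting at a singular point remain separated in $M$ and are recorded in $M'$.
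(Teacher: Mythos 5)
Your route is genuinely different from the paper's. The paper disposes of the whole proposition in a few lines by citing the global structure theorem for real analytic varieties \cite[Theorem~5.4.8]{analytic}, which hands over a manifold $M$, a proper analytic $\phi$ with $\phi(M)=Z(f)$ and a closed proper subset off which $d\phi$ is injective; the only work left is the dimension count ($\dim M=0$ gives (ii), $\dim M=2$ gives (i) via Proposition~\ref{p:anvar}, $\dim M=1$ gives (iii) after classifying $1$-manifolds and using Proposition~\ref{p:analytic1d} to make the degenerate locus discrete). You instead build the parametrisation by hand: inverse function theorem at points where some $\nabla f_i\neq 0$, restriction of the other coordinate to the resulting arc via Proposition~\ref{p:analytic1d}, and Weierstrass preparation plus real Puiseux branches at the genuinely singular points, followed by gluing. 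This is a legitimate strategy and the regular-point analysis is correct. But be aware that you have not made the proof more elementary, only relocated the black box: the local branch decomposition of a real analytic plane curve (finitely many branches, each an analytic arc with isolated critical parameters, locally finite singular set) and the assembly of countably many overlapping local arcs into a Hausdorff, second-countable analytic $1$-manifold are each theorems of comparable weight to the single citation the paper uses, and you assert both rather than prove or cite them. If you keep this route you must supply references for the real Puiseux parametrisation and actually carry out the gluing (gluing only arcs that overlap on open subarcs, using analyticity to rule out arcs agreeing on a set with an accumulation point without agreeing locally).

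There is also one concrete gap at the final step, namely your parenthetical ``any remaining isolated points forming a separate countable set to be organised into the conclusion.'' If $Z(f)$ contains both an arc and an isolated point (e.g.\ $f=(x_1((x_1-1)^2+x_2^2),\,0)$), your $M$ covers only the non-isolated part, and there is no way to enlarge it to achieve $\phi(M)=Z(f)$ while keeping $M'$ discrete: any connected component of $M$ whose image meets an isolated point of $Z(f)$ must map constantly to it, so $\rank d\phi=0$ on that entire component and $M'$ cannot be discrete. What your argument actually yields is the disjunction ``$Z(f)\in\{\emptyset,U\}$, or $Z(f)$ is the union of a countable set with the images of countably many analytic arcs that are immersions off a discrete set,'' which is weaker than the literal statement but is all that the paper ever uses downstream (in Lemmas~\ref{l:theyareisom} and \ref{l:aisom} only the existence of an analytic arc inside a non-trivial, uncountable zero set is needed). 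To be fair, the paper's own proof glosses over the same mixed-dimension possibility when it assumes $M$ has a single well-defined dimension; but since you are constructing $M$ explicitly, the obligation to resolve this lands on you, either by proving the isolated points cannot occur alongside arcs in your setting or by weakening the conclusion you claim.
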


\begin{proof}
	Suppose $Z(f) \neq \emptyset$.
	By \cite[Theorem 5.4.8]{analytic},
	there exists a real analytic manifold $M$,
	a closed proper subset $M' \subset M$ and a proper analytic map $\phi : M \rightarrow U$ so that $\phi(M) = Z(f)$ and $d\phi(x)$ is injective for each $x \in M \setminus M'$.
	As $\dim U =2$ then $\dim M \in \{0,1,2\}$.
	If $\dim M = 0$ then $M$ is a countable set of points and $Z(f)$ is countable.
	If $\dim M=2$ then, by Theorem \ref{thm:imt},
	there exists an open set $O \subset U$ where $O \subset Z(f)$;
	hence by Proposition \ref{p:anvar},
	$Z(f)=U$.
	So $\dim M =1$.
	Now $M$ is the union of countably many sets $\{M_i : i \in I\}$ where each set $M_i$ is analytically diffeomorphic to either $\mathbb{R}$ or $\mathbb{T}$.
	Since $\rank d\phi(x) = 0$ if and only if $d \phi(x)\cdot d \phi(x) = 0$,
	the set $M'$ is the zero set of a non-zero analytic function with $1$-dimensional domain.
	The result now follows from Proposition \ref{p:analytic1d}.
\end{proof}

The next result is the analytic version of the implicit function theorem, and can be proven by combining the methods of \cite[Theorem 1.8.3]{analytic} and \cite[Theorem 2.5.7]{manifold}.

\begin{thm}
\label{thm:ifta}
	Let $U \subset \mathbb{R}^m$ and $V \in \mathbb{R}^n$ be open subsets.
	Suppose $f:U \times V \rightarrow \mathbb{R}^m$ is an analytic map and, for some $(x_0,y_0) \in U \times V$, the map
	\begin{align*}
		\tilde{f} : U \rightarrow \mathbb{R}^m, ~ x \mapsto f(x,y_0)
	\end{align*}
	has the property that $\rank d \tilde{f}(x_0) = m$.
	Then there exists a neighbourhood $V' \subset V$ of $y_0$, 
	a neighbourhood $W$ of $f(x_0,y_0)$ and a unique analytic map $g : V' \times W \rightarrow U$ such that for all $(y,w) \in V' \times W$,
	\begin{align*}
		f( g(y,w) , w) = w.
	\end{align*}
\end{thm}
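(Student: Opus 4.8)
The plan is to deduce this parametrised inverse function theorem from the analytic inverse function theorem already recorded as Theorem \ref{thm:imt}, by augmenting $f$ with the identity in the parameter variable. Concretely, I would introduce the analytic map
\begin{align*}
F : U \times V \to \mathbb{R}^m \times \mathbb{R}^n, \qquad F(x,y) = (f(x,y), y),
\end{align*}
which is analytic since $f$ is and since the second coordinate is linear. The point of this construction is that solving $f(x,y)=w$ for $x$ in terms of $(y,w)$ is exactly the problem of inverting $F$: we have $F(x,y)=(w,y)$ if and only if $f(x,y)=w$.

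Next I would verify that $F$ is a local analytic diffeomorphism at $(x_0,y_0)$. Writing the Jacobian in block form with respect to the splitting $\mathbb{R}^m \times \mathbb{R}^n$,
\begin{align*}
dF(x_0,y_0) = \begin{pmatrix} \partial_x f(x_0,y_0) & \partial_y f(x_0,y_0) \\ 0 & I_n \end{pmatrix},
\end{align*}
and since the hypothesis $\rank d\tilde f(x_0)=m$ says precisely that the top-left block $\partial_x f(x_0,y_0)$ is invertible, the whole matrix is block upper triangular with invertible diagonal blocks, and hence has rank $m+n$. By Theorem \ref{thm:imt} there is a neighbourhood of $(x_0,y_0)$ on which $F$ is a local analytic diffeomorphism; shrinking this neighbourhood if necessary, I may take $F$ there to be an injective analytic diffeomorphism onto an open set, with an analytic inverse.

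From the analytic inverse $F^{-1}$ I would then read off $g$. Because $F$ fixes the second coordinate, its inverse must have the form $F^{-1}(w,y) = (g(y,w), y)$ for an analytic map $g$ taking values in $U$, and the identity $F(F^{-1}(w,y)) = (w,y)$ yields $f(g(y,w),y)=w$. After shrinking the domain of $F^{-1}$ to contain a product neighbourhood $V' \times W$ of $\big(y_0, f(x_0,y_0)\big)$, this $g$ is the required map. Uniqueness follows from the local injectivity of $F$, equivalently of the map $x \mapsto f(x,y)$ for each fixed $y$ near $y_0$: any two analytic solutions whose graphs lie in the diffeomorphism neighbourhood must coincide.

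I expect the only genuine work to be the bookkeeping of neighbourhoods. Theorem \ref{thm:imt} furnishes a diffeomorphism on some neighbourhood of $(x_0,y_0)$, whereas the statement demands that $F^{-1}$ be defined on a product set $V' \times W$ and that $g$ take values in $U$, so I would need to shrink to a product form and confirm the image condition. By contrast, the analyticity of $g$ is handed to us by Theorem \ref{thm:imt}, and the Jacobian computation is immediate, so the block-triangular rank argument together with the cited inverse function theorem carries essentially all of the content.
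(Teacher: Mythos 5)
Your proof is correct and is essentially the intended argument: the paper gives no proof of Theorem \ref{thm:ifta} beyond citing \cite[Theorem 1.8.3]{analytic} and \cite[Theorem 2.5.7]{manifold}, and the augmentation $F(x,y)=(f(x,y),y)$ with the block-triangular Jacobian, followed by an application of the analytic inverse function theorem (Theorem \ref{thm:imt}), is exactly the standard route those references take. Note also that you have silently (and correctly) repaired a typo in the statement: the displayed identity should read $f(g(y,w),y)=w$ rather than $f(g(y,w),w)=w$, as is clear from how the theorem is applied in the proof of Theorem \ref{t:genrot}.
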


\begin{rem}\label{rem:analyticfunctions}
    Since all $d$-dimensional normed spaces are isomorphic (as topological vector spaces) to $\mathbb{R}^d$,
    we can easily define analytic maps between normed spaces by choosing bases for our normed spaces.
    Hence, all of the results of this section will extend to general normed spaces.
\end{rem}

\subsection{Rigidity theoretic background}

For a set $V$ and a normed space $X$, a \emph{placement of $V$ in $X$} is any element $p \in X^{V}$.
We define the \emph{placement norm} to be the norm $\|\cdot \|_V$ of $X^{V}$ where $\|p\|_V := \max_{v \in V} \|p_v \|$.
Let $G = (V,E)$ be a (finite simple) graph with placement $p :V \rightarrow X$ in some (finite dimensional) normed space $X$, then the pair $(G,p)$ is called a \emph{framework in $X$}.
We define the set 
\begin{align*}
X_G := \left\{ p \in X^{V} : p_v \neq p_w \text{ for all } vw \in E \right\}.
\end{align*}
The set $X_G$ is an open conull subset of $X^{V}$; further, if $\dim X >1$ then $X_G$ is path-connected.
For a placement $p$ and affine map $g:X \rightarrow X$, we define $g\circ p := (g(p_v))_{v \in V}$.

A non-zero point $x$ in a normed space $X$ is a \emph{smooth point} if the norm is differentiable $x$;
equivalently,
$x$ is smooth if there exists a unique linear functional $\varphi_x : X \rightarrow \mathbb{R}$ where $\varphi_x(x)=\|x\|^2$ and $\|\varphi_x\|=\|x\|$.
If $x$ is smooth then $\varphi_x$ is the derivative of the map $z \mapsto \|z\|^2/2$ at the point $x$ (see \cite[Lemma 1]{kitschulze} for a proof).
A normed space $X$ is \emph{smooth} if every non-zero point of $X$ is smooth,
and \emph{strictly convex} if $\|tx + (1-t)y\| <2$ for all linearly independent $x,y$ with $\|x\|=\|y\|=1$ and all $0 < t < 1$.
A smooth normed space is strictly convex if and only if the map $x \mapsto \varphi_x$ is injective (equivalently, is a homeomorphism).

We define the \emph{rigidity map} to be
\begin{align*}
f_G : X^V  \rightarrow \mathbb{R}^E, ~ (x_v)_{v \in V} \mapsto \left( \frac{1}{2}\|x_v - x_w \|^2 \right)_{vw \in E}.
\end{align*}
A placement $p$ of $G$ is \emph{well-positioned} if $f_G$ is differentiable at $p$ and $p \in X_G$;
we note that for a graph $G=(V,E)$ with $|E|\geq 1$,
$X$ is smooth if and only if the set of well-positioned placements of $G$ is exactly the set $X_G$.
If $(G,p)$ is well-positioned, we define the derivative 
\begin{align*}
df_G(p) : X^V  \rightarrow \mathbb{R}^E, ~ (x_v)_{v \in V} \mapsto (\varphi_{p_v-p_w}(x_v - x_w ))_{vw \in E}.
\end{align*}
of $f_G$ at $p$ to be the \emph{rigidity operator of $(G,p)$}.

For a given choice of basis $\{e_i : i=1, \ldots,d\}$ for $X$, we may also define the \emph{rigidity matrix of $(G,p)$ in $X$} to be the $|E| \times d|V|$ real matrix $R(G,p)$ with entries $a_{e, (v,i)}$, where
\begin{align*}
a_{e,(v,i)} := 
\begin{cases}
\varphi_{v,w}(e_i), & \text{if } e = vw \in E(G) \\
0, & \text{otherwise}.
\end{cases}
\end{align*} 
If $X= \mathbb{R}^d$ we will use the standard basis, i.e.~$e_i$ is the vector with $1$ for its $i$-th coordinate and $0$ elsewhere.
We note immediately that $\rank R(G,p) = \rank df_G(p)$.

The sets of isometries and linear isometries of $X$ will be denoted by $\Iso (X)$ and $\Isolin (X)$ respectively.
As $\Iso(X)$ is a Lie group, we may define the tangent space at the identity map $\iota$ (denoted by $T_\iota \Iso (X)$).
The following implies that all isometries of a normed space are affine.

\begin{thm}\label{t:mazurulam}(see \cite[Theorem 3.1.2]{thompson})
	Let $X$ be a normed space,
	$U \subset X$ an open connected subset
	and $g :U \rightarrow X$ be an isometry.
	Then there exists an affine isometry $g' : X \rightarrow X$ where $g'|_U = g$.
\end{thm}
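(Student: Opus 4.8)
The plan is to show that $g$ is an affine map on $U$; the conclusion then follows almost immediately. Indeed, if $g$ is affine on the open connected set $U$ it has a unique affine extension $g'(x) = L(x) + c$ to all of $X$, and since $g$ preserves distances its linear part $L$ satisfies $\|L(x-y)\| = \|x-y\|$ for all $x,y \in U$; by homogeneity $L$ is then a linear isometry, so $g'$ is an affine isometry with $g'|_U = g$. Since an isometry is continuous (indeed $1$-Lipschitz), and a continuous map satisfying Jensen's equation on a convex set is affine, it suffices to prove the local midpoint property: $g\bigl( \tfrac{a+b}{2}\bigr) = \tfrac{1}{2}(g(a)+g(b))$ whenever $a,b$ lie in a sufficiently small ball contained in $U$. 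Establishing this on a neighbourhood of each point of $U$ makes $g$ affine near each point; the local pieces agree wherever they overlap (both equal $g$), so by connectedness of $U$ they share a common linear part and glue into a single affine map on $U$.

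The engine for the midpoint property is the reflection argument of Mazur and Ulam. Write $m = \tfrac{1}{2}(a+b)$, set $m' = \tfrac{1}{2}(g(a)+g(b))$, and let $\psi_m(x) = 2m - x$ and $\psi_{m'}(y) = 2m' - y$ denote the point reflections through $m$ and $m'$; these are isometric involutions fixing $m$, respectively $m'$, and interchanging $\{a,b\}$, respectively $\{g(a),g(b)\}$. The composition $E := \psi_m \circ g^{-1} \circ \psi_{m'} \circ g$ is then a local isometry fixing both $a$ and $b$, and a direct computation shows that for any local isometry $\phi$ fixing $a$ and $b$ the displacement $\lambda(\phi) := \|\phi(m) - m\|$ satisfies $\lambda(\psi_m \phi^{-1}\psi_m\phi) = 2\lambda(\phi)$. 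Since $\lambda(\phi) \le \|\phi(m)-\phi(a)\| + \|a - m\| = \|a-b\|$ is bounded, this self-doubling forces $\lambda$ to vanish on the relevant family, so $E(m) = m$; unwinding the definition of $E$ gives $\psi_{m'}(g(m)) = g(m)$, that is $g(m) = m'$, which is the midpoint property. Here I use that $g$ is a homeomorphism onto the open set $g(U)$ (by invariance of domain, as $g$ is a continuous injection between subsets of $\mathbb{R}^d$), so that $g^{-1}$ and the reflection $\psi_{m'}$ are available near $m'$.

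The main obstacle is precisely the localisation of this reflection argument. In the classical Mazur--Ulam setting $g$ is a surjective isometry of the whole space, so the reflection $\psi_{m'}$, the inverse $g^{-1}$, and every iterate $\phi \mapsto \psi_m\phi^{-1}\psi_m\phi$ are globally defined, and the supremum driving the doubling bound is taken over a genuine group. In the present local setting each of these maps is only defined on an open piece, and the reflections tend to push points out of $U$ and out of $g(U)$; keeping all the relevant compositions defined on a common neighbourhood of $m$ requires taking $\|a-b\|$ small relative to the distance from $m$ and $g(m)$ to the boundaries of $U$ and $g(U)$, together with careful bookkeeping of the shrinking domains. This domain control is the technical heart of the statement (it is what separates the local extension theorem, due to Mankiewicz, from the global Mazur--Ulam theorem), and I would expect it to absorb essentially all of the work; the remaining verifications — continuity, the Jensen-equation implication, the gluing via connectedness, and checking that the linear part of the extension is a linear isometry — are routine.
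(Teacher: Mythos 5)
The paper does not actually prove this statement: it is imported directly from \cite[Theorem 3.1.2]{thompson}, i.e.\ it is the Mazur--Ulam theorem in the local form due to Mankiewicz (recall that in this paper ``normed space'' means finite-dimensional real normed space, so invariance of domain is available as you assume). Your overall architecture is the standard route to that result --- local midpoint property via the reflection/doubling trick, then Jensen's equation plus continuity gives local affineness, then gluing over the connected set $U$ and extending, with the linear part an isometry by homogeneity --- and the algebra you record ($E=\psi_m\circ g^{-1}\circ\psi_{m'}\circ g$ fixes $a,b$; $E(m)=m$ unwinds to $g(m)=m'$; $\lambda(\phi^{*})=2\lambda(\phi)$) is all correct.

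The gap is exactly the step you flag and defer, and as written the argument does not close. The iterates $E_0=E$, $E_{n+1}=\psi_m\circ E_n^{-1}\circ\psi_m\circ E_n$ satisfy $\lambda(E_n)=2^n\lambda(E)$, but $E_{n+1}$ is defined only on $\{x:\psi_m(E_n(x))\in E_n(\mathrm{dom}\,E_n)\}$, and once $2^n\lambda(E)$ becomes comparable to the size of the domain the next iterate need not be defined at $m$. You then conclude only $\lambda(E)\le 2^{-N}\|a-b\|$ for the finitely many admissible steps $N$, which does not force $\lambda(E)=0$; note that shrinking $\|a-b\|$ shrinks the bound and the available room in the same proportion, so ``take $\|a-b\|$ small'' is not by itself an answer. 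To quantify the bookkeeping you need a fact your sketch does not contain: if $h$ is an isometric embedding of a ball $B_\rho(p)$ of a finite-dimensional normed space, then $h(B_\rho(p))\supseteq B_\rho(h(p))$. (Sketch: $h(B_\rho(p))$ is open by invariance of domain; a compactness argument shows every boundary point of the image lies at distance exactly $\rho$ from $h(p)$; and a connected set meeting an open set while avoiding its boundary is contained in it.) With this, $\mathrm{dom}\,E_{n+1}$ contains a ball about $m$ of radius $\rho_n-2\lambda_n$, and chasing the radii shows the iteration survives long enough to force $\lambda(E)=0$ whenever $\|a-b\|$ is below a fixed fraction of the distance from $m$ to $\partial U$. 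Without this lemma (or a substitute) the ``technical heart'' you point to is not merely laborious but genuinely missing from your write-up; with it, your plan does go through. In any case, for the purposes of this paper the theorem is a citation, not something to be reproved.
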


For a given placement $p$ we may define the \emph{orbit of $p$} and \emph{space of infinitesimal flexes of $p$} to be the respective sets
\begin{align*}
\mathcal{O}_p := \{ g\circ p : g \in \Iso (X) \} \mbox{ and }
\mathcal{T}(p) := \{ g\circ p : g \in T_{\iota} \Iso (X) \}.
\end{align*}
A placement $p$ is \emph{isometrically full} if and only if there exists a diffeomorphism between $\mathcal{O}_p$ and $\Iso (X)$,
and \emph{full} if and only if there exists a local diffeomorphism between $\mathcal{O}_p$ and $\Iso (X)$,
i.e., $\dim \mathcal{T}(p) = \dim \Iso (X)$.
Equivalently,
$p$ is isometrically full if for every isometry $g$ with $g \circ p = p$ we have that $g$ is the identity map,
and $p$ is full if there is a finite set of isometries $g$ with $g \circ p = p$ (see \cite[Proposition 3.9(ii), Theorem 3.15]{D19}).
Any isometrically full placement is full,
any placement $p$ where the set $\{p_v:v \in V\}$ affinely spans $X$ is isometrically full,
and any placement $p$ where the set $\{p_v:v \in V\}$ affinely spans an affine hyperplane of $X$ is full.

We say that two frameworks $(G,p)$ and $(G,q)$ in a normed space $X$ are \emph{equivalent} 
(denoted by $(G,p) \sim (G,p')$) if $f_G(p) = f_G(p')$,
and we say that they are \emph{congruent} (denoted by $p \sim p'$) if there exists an isometry $g \in \Iso (X)$ such that $p' = g \circ p$.

Let $(G,p)$ be a well-positioned framework in a normed space $X$.
We define a framework $(G,p)$ to be \emph{regular} if the rigidity operator $df_G(p)$ has maximal rank,
and \emph{strongly regular} if for all frameworks equivalent to $(G,q)$ are regular.
The placement $p$ is \emph{completely regular} if for all graphs $H$ with $V(H) := V$, $(H,p)$ is well-positioned and regular,
and \emph{completely strongly regular} if for all graphs $H$ with $V(H) := V$, $(H,p)$ is well-positioned and strongly regular.
We denote the set of regular placements of $G$ in $X$ by $\reg(G;X)$, the set of strongly regular placements of $G$ in $X$ by $\sreg(G;X)$, the set of completely regular placements of a set $V$ in $X$ by $\creg (V;X)$,
and the set of completely strongly regular placements of a set $V$ in $X$ by $\csreg (V;X)$.
We note that $(G,p)$ is strongly regular if and only if $f_G(p)$ is a regular value of the map $f_G$.

\begin{rem}
	If a framework is either strongly or completely regular then it is regular,
	and if a framework is completely strongly regular then it is both strongly and completely regular.
	However,
	completely regular and strongly regular are distinct,
	as we illustrate in Figure \ref{fig:notcomreg}.
\end{rem}

 \begin{figure}[htp]
\begin{center}
\begin{tikzpicture}[scale=.4]

\filldraw (0,0) circle (3pt)node[anchor=east]{};
\filldraw (-0.5,3.5) circle (3pt)node[anchor=east]{};
\filldraw (3.5,0) circle (3pt)node[anchor=north]{};
\filldraw (4,3.5) circle (3pt)node[anchor=south]{};
\filldraw (1.75,3.5) circle (3pt)node[anchor=east]{};

 \draw[black,thick]
(0,0) -- (3.5,0) -- (4,3.5) -- (0,0);

 \draw[black,thick]
(3.5,0) -- (4,3.5) -- (0,0);

 \draw[black,thick]
(3.5,0) -- (1.75,3.5) -- (0,0);

 \draw[black,thick]
(3.5,0) -- (-0.5,3.5) -- (0,0);

\draw[black,dashed]
(-1,3.5) -- (4.5,3.5);

        \end{tikzpicture}
          \hspace{0.5cm}
     \begin{tikzpicture}[scale=.4]
\filldraw (0,0) circle (3pt)node[anchor=east]{};
\filldraw (0,3.5) circle (3pt)node[anchor=east]{};
\filldraw (3.5,0) circle (3pt)node[anchor=north]{};
\filldraw (3.5,3.5) circle (3pt)node[anchor=south]{};

 \draw[black,thick]
(0,0) -- (0,3.5) -- (3.5,3.5) -- (3.5,0) -- (0,0);

\end{tikzpicture}
\end{center}
\vspace{-0.3cm}
\caption{Two frameworks in the Euclidean plane. The framework on the left is strongly regular but not completely regular as it has a colinear triple. The framework on the right is completely regular but not strongly regular as we can flatten the framework into colinear non-regular framework.}
\label{fig:notcomreg}
\end{figure}
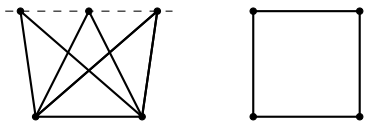

A framework $(G,p)$ in a normed space $X$ is: \emph{locally rigid} in $X$ if there exists a neighbourhood $U \subset X^V$ of $p$ where any equivalent framework in $U$ is congruent to $(G,p)$ (otherwise $(G,p)$ is locally flexible); \emph{globally rigid} if any equivalent frameworks $(G,q)$ in $X$ is congruent to $(G,p)$; \emph{infinitesimally rigid} in $X$ if $(G,p)$ is well-positioned and $\ker df_G(p) = \mathcal{T}(p)$ (otherwise $(G,p)$ is infinitesimally flexible);  \emph{independent} in $X$ if $(G,p)$ is well-positioned and $\rank df_G(p) = |E|$; 
	otherwise $G$ is dependent; \emph{minimally rigid} if it is infinitesimally rigid and $(G-e,p)$ is infinitesimally flexible for all $e\in E$.
	
It is easy to convince oneself that the framework on the left of Figure \ref{fig:notcomreg} is locally/infinitesimally rigid in the Euclidean plane, while the framework on the right is locally/infinitesimally flexible.
If we were to add a diagonal brace for the framework on the right, 
then we would obtain a locally/infinitesimally rigid framework for the Euclidean plane.
However such a framework will be locally/infinitesimally flexible with a non-Euclidean norm (Figure \ref{fig:k4-e}).

 \begin{figure}[htp]
\begin{tikzpicture}[scale=0.8]
\draw[thick] (0,0) -- (-0.794,0.794); 
\draw[thick] (0,0) -- (0.694,0.894); 

\draw[thick] (-0.794,0.794) -- (-0.1,1.687); 
\draw[thick] (-0.794,0.794) -- (0.694,0.894); 
\draw[thick] (0.694,0.894) -- (-0.1,1.687); 

\draw [dashed,domain=0:90] plot ({(cos(\x))^(2/3)}, {(sin(\x))^(2/3)});
\draw [dashed,domain=90:180] plot ({-(-cos(\x))^(2/3)}, {(sin(\x))^(2/3)});
\draw [dashed,domain=180:270] plot ({-(-cos(\x))^(2/3)}, {-(-sin(\x))^(2/3)});
\draw [dashed,domain=270:360] plot ({(cos(\x))^(2/3)}, {-(-sin(\x))^(2/3)});

\draw [dashed,domain=0:90] plot ({1.687*(cos(\x))^(2/3)}, {1.687*(sin(\x))^(2/3)});
\draw [dashed,domain=90:180] plot ({-1.687*(-cos(\x))^(2/3)}, {1.687*(sin(\x))^(2/3)});
\draw [dashed,domain=180:270] plot ({-1.687*(-cos(\x))^(2/3)}, {-1.687*(-sin(\x))^(2/3)});
\draw [dashed,domain=270:360] plot ({1.687*(cos(\x))^(2/3)}, {-1.687*(-sin(\x))^(2/3)});

\draw[fill] (0,0) circle [radius=0.05];
\draw[fill] (-0.794,0.794) circle [radius=0.05];
\draw[fill] (0.694,0.894) circle [radius=0.05];
\draw[fill] (-0.1,1.687) circle [radius=0.05];





\end{tikzpicture}\qquad\qquad
\begin{tikzpicture}[scale=0.8]


\draw [dashed,domain=0:90] plot ({(cos(\x))^(2/3)}, {(sin(\x))^(2/3)});
\draw [dashed,domain=90:180] plot ({-(-cos(\x))^(2/3)}, {(sin(\x))^(2/3)});
\draw [dashed,domain=180:270] plot ({-(-cos(\x))^(2/3)}, {-(-sin(\x))^(2/3)});
\draw [dashed,domain=270:360] plot ({(cos(\x))^(2/3)}, {-(-sin(\x))^(2/3)});

\draw [dashed,domain=0:90] plot ({1.687*(cos(\x))^(2/3)}, {1.687*(sin(\x))^(2/3)});
\draw [dashed,domain=90:180] plot ({-1.687*(-cos(\x))^(2/3)}, {1.687*(sin(\x))^(2/3)});
\draw [dashed,domain=180:270] plot ({-1.687*(-cos(\x))^(2/3)}, {-1.687*(-sin(\x))^(2/3)});
\draw [dashed,domain=270:360] plot ({1.687*(cos(\x))^(2/3)}, {-1.687*(-sin(\x))^(2/3)});

\draw[fill] (0,0) circle [radius=0.05];

\draw[thick] (0,0) -- (-1,-0.15); 
\draw[thick] (0,0) -- (0,-1); 

\draw[thick] (-1,-0.15) -- (0,-1); 
\draw[thick] (-1,-0.15) -- (-1,-1.15); 
\draw[thick] (0,-1) -- (-1,-1.15);

\draw[fill] (-1,-0.15) circle [radius=0.05];
\draw[fill] (0,-1) circle [radius=0.05];
\draw[fill] (-1,-1.15) circle [radius=0.05];

\draw[->,domain=100:190] plot ({2.2*cos(\x)}, {2.2*sin(\x)});

\end{tikzpicture}
\caption{(Left) An independent placement in $\ell_4^2$ of the complete graph on 4 vertices minus an edge. (Right) An equivalent independent framework that can be reached by continuous motion that preserves edge lengths. As the distance between the two non-adjacent vertices is altered during the motion, the framework is not locally rigid. The framework can be seen to be infinitesimally flexible by applying Theorem \ref{t:asimowroth}.}\label{fig:k4-e}
\end{figure}
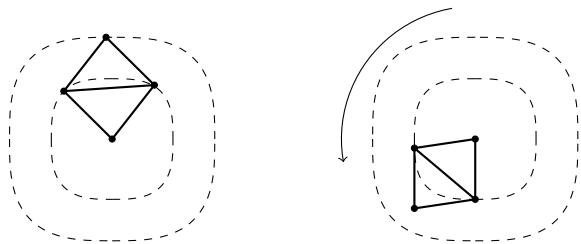

Global rigidity implies local rigidity,
however the converse is not always true (simply consider a tree when $X$ is 1-dimensional).
It is not too hard to show that a framework in $X$ is minimally rigid if and only if it is both infinitesimally rigid and independent (see, for example, \cite{dew2}).
We can also link infinitesimal and local rigidity.

\begin{thm}\label{t:asimowroth}\cite[Theorem 1.1]{D19}
	Let $(G,p)$ be a regular framework in a smooth normed space $X$.
	Then $(G,p)$ is infinitesimally rigid if and only if $(G,p)$ is locally rigid.
\end{thm}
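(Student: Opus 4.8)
The plan is to adapt the Asimow--Roth argument to the smooth normed setting, replacing the real-algebraic input available in the Euclidean case with the constant rank theorem; the regularity hypothesis is exactly what supplies the constant rank. Write $k := \dim \ker df_G(p)$. Since $X$ is smooth, $X_G$ is open and $f_G$ is continuously differentiable there: each coordinate of $f_G$ has the form $z \mapsto \frac{1}{2}\|z\|^2$ precomposed with a difference of coordinates, and $z \mapsto \varphi_z$ is the gradient of the convex function $z \mapsto \frac{1}{2}\|z\|^2$, hence continuous. I will use two facts that lie outside the dimension count. First, $\mathcal{T}(p) \subseteq \ker df_G(p)$, i.e.\ every trivial flex is an infinitesimal flex; this is immediate, since applying an isometry preserves all edge lengths. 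Second, the orbit $\mathcal{O}_p$ is an embedded submanifold of $X^V$ with $T_p \mathcal{O}_p = \mathcal{T}(p)$, and in particular $\dim \mathcal{O}_p = \dim \mathcal{T}(p)$. This second fact is where the structure of $\Iso(X)$ enters: by Theorem \ref{t:mazurulam} every isometry is affine, $\Isolin(X)$ is compact, and so the smooth action $g \mapsto g \circ p$ of the Lie group $\Iso(X)$ on $X^V$ is proper with embedded orbits tangent to $\mathcal{T}(p)$.

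The first step is to promote regularity to constant rank near $p$. As rank is lower semicontinuous, the set $\{q \in X_G : \rank df_G(q) \geq \rank df_G(p)\}$ is open; since $\rank df_G(p)$ is maximal over $X_G$ by regularity, $\rank df_G$ is constant, equal to $\dim X^V - k$, on some neighbourhood of $p$. The constant rank theorem, applied to the $C^1$ map $f_G$ on this neighbourhood, then shows that the level set $M := f_G^{-1}(f_G(p))$ is, near $p$, a $C^1$ submanifold of $X^V$ of dimension $k$ with $T_p M = \ker df_G(p)$. Moreover $\mathcal{O}_p \subseteq M$, because congruent frameworks are equivalent.

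For the forward implication, suppose $(G,p)$ is infinitesimally rigid, so that $\ker df_G(p) = \mathcal{T}(p)$ and hence $\dim \mathcal{O}_p = \dim \mathcal{T}(p) = k = \dim M$. Then $\mathcal{O}_p$ is an embedded submanifold of $M$ of the same dimension as $M$, and is therefore open in $M$. Choosing an open set $U \subseteq X^V$ with $p \in U$ and $U \cap M \subseteq \mathcal{O}_p$, any $(G,q)$ with $q \in U$ and $(G,q) \sim (G,p)$ satisfies $q \in U \cap M \subseteq \mathcal{O}_p$, i.e.\ $q \sim p$. Thus $(G,p)$ is locally rigid.

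For the converse I argue contrapositively: assume $(G,p)$ is infinitesimally flexible, so $\dim \mathcal{O}_p = \dim \mathcal{T}(p) < k = \dim M$. Then $\mathcal{O}_p$ is a submanifold of $M$ of strictly smaller dimension, so it cannot contain a neighbourhood of $p$ in $M$; consequently every neighbourhood $U$ of $p$ meets $M \setminus \mathcal{O}_p$, yielding frameworks $(G,q)$ with $q \in U$ that are equivalent but not congruent to $(G,p)$, and so $(G,p)$ is locally flexible. The main obstacle is the second standing fact of the first paragraph, namely that $\mathcal{O}_p$ is genuinely an embedded submanifold with tangent space $\mathcal{T}(p)$ of the expected dimension, since in a non-Euclidean normed plane the isometry group and its orbits are markedly more delicate than in the Euclidean case; once this and the constant rank theorem are in hand, both directions reduce to the dimension count above.
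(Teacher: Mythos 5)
The paper itself offers no proof of this statement --- it is imported directly from \cite{D19} --- and your argument is correct and is essentially the constant-rank Asimow--Roth argument that the cited source uses: regularity plus lower semicontinuity of rank makes $f_G$ of locally constant rank, the level set is locally a $C^1$ manifold of dimension $\dim\ker df_G(p)$, and both directions reduce to comparing this with $\dim \mathcal{O}_p = \dim\mathcal{T}(p)$. The two points you rightly flag as needing care are indeed the substantive ones, and both are fine: continuity of $q \mapsto \varphi_q$ follows from the standard fact that a convex function differentiable on an open subset of a finite-dimensional space is automatically $C^1$ there, and the fact that $\mathcal{O}_p$ is an embedded submanifold with $T_p\mathcal{O}_p = \mathcal{T}(p)$ is exactly \cite[Proposition 3.9, Theorem 3.15]{D19}, provable from properness of the $\Iso(X)$-action on $X^V$ via compactness of $\Isolin(X)$.
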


\begin{thm}\cite[Theorem 10]{kitschulze}\label{t:kitschulze}
    Let finite $(G,p)$ be a well-positioned framework in a $d$-dimensional normed space $X$.
    Then the following hold:
    \begin{enumerate}[(i)]
        \item If $(G, p)$ is independent then $|E|= d|V| - \dim \ker df_G(p)$.
        \item If $(G, p)$ is infinitesimally rigid then $|E| \geq d|V|- \dim \mathcal{T}(p)$.
    \end{enumerate}
\end{thm}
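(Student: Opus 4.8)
The plan is to recognise that both statements are immediate consequences of the rank-nullity theorem applied to the rigidity operator. Since $(G,p)$ is well-positioned, the rigidity operator $df_G(p) : X^V \rightarrow \mathbb{R}^E$ is defined and, by its explicit formula $(x_v)_{v \in V} \mapsto (\varphi_{p_v-p_w}(x_v - x_w))_{vw \in E}$, it is genuinely a linear map (each $\varphi_{p_v-p_w}$ being a linear functional). The only data I need are the dimensions of the two spaces: because $X$ is $d$-dimensional, the domain $X^V$ has dimension $d|V|$, and the codomain $\mathbb{R}^E$ has dimension $|E|$. Rank-nullity then reads
\begin{align*}
    d|V| = \dim X^V = \rank df_G(p) + \dim \ker df_G(p).
\end{align*}

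For part (i), I would simply unpack the definition of independence. By definition $(G,p)$ independent means $\rank df_G(p) = |E|$, so substituting into the displayed identity gives $d|V| = |E| + \dim \ker df_G(p)$, which rearranges to $|E| = d|V| - \dim \ker df_G(p)$ as required.

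For part (ii), I would use the definition of infinitesimal rigidity together with the trivial bound on the rank. Infinitesimal rigidity means precisely $\ker df_G(p) = \mathcal{T}(p)$, so $\dim \ker df_G(p) = \dim \mathcal{T}(p)$, and the rank-nullity identity becomes $d|V| = \rank df_G(p) + \dim \mathcal{T}(p)$. Since $df_G(p)$ maps into $\mathbb{R}^E$, we always have $\rank df_G(p) \leq |E|$; substituting this bound yields $d|V| \leq |E| + \dim \mathcal{T}(p)$, which is exactly $|E| \geq d|V| - \dim \mathcal{T}(p)$.

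I do not expect any genuine obstacle here, as the argument is pure linear algebra; the content of the theorem lies entirely in the earlier framework established in this section. The only points requiring a moment of care are confirming that the rigidity operator is a well-defined linear map on the full space $X^V$ (guaranteed by the well-positioned hypothesis, via smoothness of $X$ at the edge vectors $p_v - p_w$), and correctly matching the definitions of \emph{independent} and \emph{infinitesimally rigid} to the rank and kernel of $df_G(p)$ respectively. Everything else is the bookkeeping of the dimension count $\dim X^V = d|V|$.
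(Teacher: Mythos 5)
Your argument is correct and is essentially the standard one: the paper itself imports this statement from Kitson--Schulze without proof, and the proof in that reference is exactly the rank--nullity computation you give, combined with the paper's definitions of \emph{independent} ($\rank df_G(p)=|E|$) and \emph{infinitesimally rigid} ($\ker df_G(p)=\mathcal{T}(p)$). Nothing further is needed.
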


We can improve Theorem \ref{t:kitschulze} in the special case when $d=2$. We say that $G=(V,E)$ is \emph{$(2,2)$-tight} if $|E|=2|V|-2$ and for every subgraph $(V',E')$ of $G$ we have $|E'|\leq 2|V'|-2$.

\begin{thm}\label{thm:rigidne}\cite[Theorem 1.4]{dew2}\label{thm:22normed}
	Let $X$ be a normed plane and $G=(V,E)$ be a graph.
	Then the following are equivalent.
	\begin{enumerate}[(i)]
	    \item There exists a minimally rigid framework $(G,p)$ in $X$.
	    \item $G$ is $(2,2)$-tight.
	\end{enumerate}
\end{thm}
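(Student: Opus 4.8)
The plan is to handle the two implications separately, reducing both to statements about $\rank R(G,p)$ and the quantity $\dim \mathcal{T}(p)$. Throughout I use that in a non-Euclidean normed plane the group of linear isometries is finite, so $\dim \Iso(X) = 2$; hence any placement whose image affinely spans a line is full with $\dim \mathcal{T}(p) = 2$. The central reduction is this: by Theorem \ref{t:kitschulze}(i) a well-positioned independent framework satisfies $|E| = 2|V| - \dim\ker df_G(p)$, while $\mathcal{T}(p) \subseteq \ker df_G(p)$ always holds. Therefore, if $G$ is connected with $|E| = 2|V|-2$ and $p$ is independent, then $\dim\ker df_G(p) = 2 = \dim\mathcal{T}(p)$ forces $\ker df_G(p) = \mathcal{T}(p)$, i.e.\ $(G,p)$ is infinitesimally rigid and hence minimally rigid. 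Thus the whole problem reduces to deciding when $G$ admits an \emph{independent} placement.

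For (i)$\,\Rightarrow\,$(ii): a minimally rigid $(G,p)$ is independent, so every subframework $(H, p|_{V(H)})$ is independent, its rigidity matrix being a row-submatrix of $R(G,p)$. Applying Theorem \ref{t:kitschulze}(i) to $H$, and using that $p|_{V(H)}$ is full (so $\dim\mathcal{T} = 2$) whenever $H$ contains an edge, yields $|E(H)| \leq 2|V(H)| - 2$; the edgeless case is immediate. Combined with the global identity $|E| = 2|V| - \dim\mathcal{T}(p) = 2|V| - 2$ from the reduction, this is exactly $(2,2)$-tightness.

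For (ii)$\,\Rightarrow\,$(i): I would first argue that the maximal value of $\rank R(G,p)$ over well-positioned $p$ is attained on a conull open subset of $X_G$ --- using lower semicontinuity of rank, path-connectedness of $X_G$ (as $\dim X = 2 > 1$), and conullity of the well-positioned placements --- so that ``an independent placement exists'' is a property of $G$ alone. I would then prove by induction on $|V|$, via a recursive Henneberg-type construction of $(2,2)$-tight simple graphs, that every such graph has an independent placement. For a \emph{$0$-extension} (a new vertex $v$ joined to $x,y$), extend an independent placement by choosing $p_v$ at a smooth point with $p_v-p_x$ and $p_v-p_y$ linearly independent; the two new rows are the only ones meeting the columns of $v$, where they form an invertible $2\times 2$ block, so independence is preserved. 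For a \emph{$1$-extension} (delete $xy$, add $v$ joined to $x,y$ and a third vertex $z$), the key is to place $p_v$ on the line through $p_x,p_y$ (after first perturbing so that $p_z$ avoids this line), whence $\varphi_{p_v-p_x}$ and $\varphi_{p_v-p_y}$ are both scalar multiples of $\varphi_{p_x-p_y}$ by the homogeneity identity $\varphi_{cx}=c\varphi_x$. A direct dependency calculation then shows the combination of the rows $vx,vy$ that annihilates the columns of $v$ is a nonzero multiple of the deleted row $xy$; independence of $(G,p)$ together with the row $vz$ supplying the second independent functional in the columns of $v$ force all coefficients to vanish, so $(G',p')$ is independent.

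The main obstacle is this sufficiency direction. The geometric content of the $1$-extension is comparatively mild in a normed plane precisely because of $\varphi_{cx}=c\varphi_x$, but two points need genuine care: first, pinning down a correct recursive construction of \emph{simple} $(2,2)$-tight graphs, ensuring the $0$- and $1$-extensions together with the right base cases generate all of them while preserving simplicity; and second, making the genericity reduction rigorous for a general, possibly non-smooth norm, where $\varphi_x$ is defined only on the conull set of smooth directions, so that each inductive placement can be chosen well-positioned while retaining every previously arranged open condition.
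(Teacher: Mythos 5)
The paper does not actually prove this statement---it is quoted from \cite[Theorem 1.4]{dew2}---so your proposal must be measured against the argument given there (which builds on Kitson--Power and Nixon--Owen--Power). Your necessity direction is sound and is the standard one: independence plus Theorem \ref{t:kitschulze}(i), applied to $G$ and to every subgraph, together with the fact that a non-Euclidean normed plane has only finitely many linear isometries (so the trivial flexes of any placement with two distinct points form a $2$-dimensional space), yields exactly the $(2,2)$-tight counts. Your $1$-extension calculation (placing $p_v$ on the line through $p_x,p_y$ and exploiting $\varphi_{cx}=c\varphi_x$) is also correct and is essentially the argument used in the literature.

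The genuine gap is in the sufficiency direction, precisely at the point you flag as needing care: $0$- and $1$-extensions, starting from $K_4$ or from \emph{any} finite list of base graphs, do not generate all simple $(2,2)$-tight graphs. Every $(2,2)$-tight graph has a vertex of degree $2$ or $3$ (the average degree is $4-4/|V|$), but a degree-$3$ vertex whose neighbourhood induces a triangle admits no inverse $1$-extension within the class of simple graphs. Concretely, two copies of $K_4$ glued at a single vertex (this is exactly the graph on the left of Figure \ref{fig:grexamples}) is $(2,2)$-tight with $|V|=7$ and $|E|=12$, has no degree-$2$ vertex, and every degree-$3$ vertex lies inside a $K_4$; hence it is irreducible under your two moves, and chaining further copies of $K_4$ produces infinitely many such irreducible graphs. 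The construction actually needed is that of \cite[Theorem 5.3]{NOP}, which supplements the two Henneberg moves with a ``vertex-to-$K_4$'' (graph extension) move, and the proof in \cite{dew2} must separately show that this move preserves the existence of an independent placement---a genuinely different geometric step from the row-dependency computation you give for the $1$-extension. Without that extra move and its accompanying lemma, your induction does not reach all $(2,2)$-tight graphs, so the implication (ii)$\Rightarrow$(i) is not established.
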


Given a normed space $X$, a graph $G=(V,E)$ is: \emph{rigid} in $X$ if there exists a well-positioned placement of $G$ such that $(G,p)$ is infinitesimally rigid (otherwise $G$ is flexible); \emph{globally rigid} in $X$ if the set
	\begin{align*} 
	\grig (G;X) := \left\{ p \in X^V : (G,p) \text{ is globally rigid in } X \right\} 
	\end{align*}
	has a non-empty interior in $X^{V}$;
 \emph{independent} in $X$ if there exists a well-positioned placement of $G$ such that $(G,p)$ is independent;
 \emph{minimally rigid} in $X$ if there exists a well-positioned placement of $G$ such that $(G,p)$ is minimally rigid.
 In general normed spaces, rigidity and global rigidity can behave strangely, as can be seen in Figure \ref{fig:grexamples}.

  \begin{figure}[htp]
\begin{center}
     \begin{tikzpicture}[scale=0.8]
    \node[vertex] (a) at (2,0) {};
	\node[vertex] (b) at (1,-1) {};
    \node[vertex] (c) at (1,1) {};
    
	\node[vertex] (d) at (0,0) {};
	
    \node[vertex] (e) at (-2,0) {};
	\node[vertex] (f) at (-1,-1) {};
    \node[vertex] (g) at (-1,1) {};
	
	\draw[edge] (a)edge(b);
	\draw[edge] (a)edge(c);
	\draw[edge] (a)edge(d);
	\draw[edge] (b)edge(c);
	\draw[edge] (b)edge(d);
	\draw[edge] (c)edge(d);
    
	\draw[edge] (d)edge(e);
	\draw[edge] (d)edge(f);
	\draw[edge] (d)edge(g);
	\draw[edge] (e)edge(f);
	\draw[edge] (e)edge(g);
	\draw[edge] (f)edge(g);
	
	\filldraw[black] (0,-1.5) circle {};
\end{tikzpicture}\qquad \qquad
    \begin{tikzpicture}[scale=0.6]
    \node[vertex] (a) at (-2,-1) {};
	\node[vertex] (b) at (-2,1) {};
	
    \node[vertex] (c) at (0,1) {};
	\node[vertex] (d) at (0,-1) {};
	
    \node[vertex] (e) at (2,-1) {};
	\node[vertex] (f) at (2,1) {};

	\draw[edge] (a)edge(b);
	\draw[edge] (a)edge(c);
	\draw[edge] (a)edge(d);
	\draw[edge] (b)edge(c);
	\draw[edge] (b)edge(d);
	\draw[edge] (c)edge(d);
    
	\draw[edge] (c)edge(e);
	\draw[edge] (c)edge(f);
	\draw[edge] (d)edge(e);
	\draw[edge] (d)edge(f);
	\draw[edge] (e)edge(f);

\draw[thick,dashed]
(0,-2) -- (0,2);
\end{tikzpicture}
\end{center}
\vspace{-0.3cm}
\caption{(Left) A graph that is minimally rigid in any normed plane. Although the graph has a separating vertex, the normed plane itself does not have a continuous family of rotational isometries to exploit so as to flex the framework. The graph is also not globally rigid as we can translate the framework so that the separating vertex is at the origin and then map every vertex in the right copy of $K_4$ to its negative. (Right) The graph $H$ is globally rigid in all analytic normed planes (Theorem \ref{thm:H}). It is not globally rigid in the Euclidean plane; for almost all placements, the left two vertices may be reflected across the dashed line to obtain an equivalent but non-congruent framework.
However, the required reflection does not exist in most normed planes.}
\label{fig:grexamples}
\end{figure}
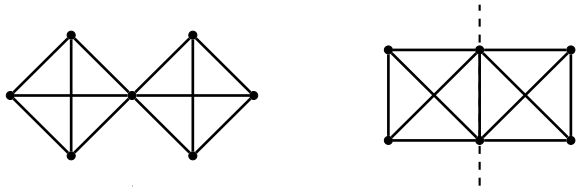

\section{Rigidity in normed spaces}
\label{sec:rigid}

\subsection{Analytic normed spaces}

A normed space $X$ is \emph{analytic} if the restriction of the norm to the set $X \setminus \{0\}$ is an analytic function.
An example of an analytic normed space is $\ell_{2n}^d$ for any positive integer $n$ (although in general $\ell_p^n$ is not analytic).
It can be shown that every norm can be approximated uniformly by analytic norms for the same linear space;
for example, see \cite[Theorem 2.5.2]{thompson}\footnote{The given reference describes approximating $d$-dimensional centrally symmetric convex bodies by those with analytic boundaries. However, the Minkowski functional of a convex body defines a homeomorphism between the space of $d$-dimensional centrally symmetric convex bodies with the topology gifted by the Hausdorff metric, and the space of norms for $\mathbb{R}^d$ with the compact-open topology. Furthermore, convex bodies with analytic boundaries will always have analytic Minkowski functionals.}.
It is immediate that every analytic norm is smooth.
They are also strictly convex.

\begin{lem}\label{prop:strconv}
    Every analytic normed space is strictly convex.
\end{lem}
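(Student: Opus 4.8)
The plan is to argue by contradiction, reducing the failure of strict convexity to the presence of a genuine line segment in the unit sphere, and then to apply the one-variable identity theorem (Proposition \ref{p:analytic1d}) to the restriction of the norm to the affine line carrying that segment.

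First I would negate strict convexity. If $X$ is not strictly convex then, by definition, there exist linearly independent unit vectors $a,b \in X$ and some $\lambda \in (0,1)$ with $\|\lambda a + (1-\lambda)b\| = 1$ (equivalently, one may normalise a pair of linearly independent $x,y$ achieving $\|x+y\| = \|x\| + \|y\|$). I then introduce the affine map $\gamma : \mathbb{R} \to X$, $\gamma(t) = (1-t)a + tb$, and the scalar function $h := \|\cdot\| \circ \gamma$. The key structural point is that $\gamma(t) \neq 0$ for every $t$: since $a,b$ are linearly independent, $(1-t)a + tb = 0$ would force both coefficients to vanish, which is impossible. Hence $\gamma$ takes values in $X \setminus \{0\}$, where the norm is analytic, and as $\gamma$ is itself analytic the composite $h : \mathbb{R} \to \mathbb{R}$ is analytic.

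Next I would pin down the values of $h$ on $[0,1]$. Convexity of the norm gives $h(t) \le (1-t)\|a\| + t\|b\| = 1$ for all $t \in [0,1]$, while $h(0) = h(1) = 1$ and $h(\lambda) = 1$ by assumption. A standard property of convex functions---one bounded above by $1$ that attains the value $1$ at $0$, at an interior point $\lambda$, and at $1$ is identically $1$ on $[0,1]$---then yields $h \equiv 1$ on $[0,1]$. Now the analyticity does the real work: $h - 1$ is analytic on the open interval $\mathbb{R}$ and vanishes on $[0,1]$, which contains a convergent sequence whose limit lies in $\mathbb{R}$, so Proposition \ref{p:analytic1d} forces $h \equiv 1$ on all of $\mathbb{R}$.

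Finally I would derive the contradiction from the behaviour of $h$ at infinity. Writing $\gamma(t) = a + t(b-a)$ with $b - a \neq 0$, the reverse triangle inequality gives $h(t) \ge |t|\,\|b-a\| - \|a\| \to \infty$ as $t \to \infty$, which is incompatible with $h \equiv 1$. Hence no such segment exists and $X$ is strictly convex. I expect the only genuinely delicate point to be this reduction step: extracting from the failure of strict convexity a \emph{full} segment on the unit sphere rather than a single non-extremal point, which is exactly where the elementary convexity observation is needed. Once the norm is restricted to the segment's affine line, the analytic identity theorem makes the remainder routine.
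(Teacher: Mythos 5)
Your proof is correct and follows essentially the same route as the paper: restrict the norm to the line carrying the segment, apply the one-variable analytic identity theorem (you use Proposition \ref{p:analytic1d} where the paper invokes Proposition \ref{p:anvar}, but these are interchangeable here), and contradict the unboundedness of the norm along that line. The only difference is that you explicitly justify, via convexity, that a single non-extremal point on the sphere forces the whole segment onto it --- a step the paper absorbs into its statement of the negation --- so there is nothing to change.
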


\begin{proof}
    Suppose an analytic normed space $X$ is not strictly convex;
    i.e.~there exists $\|y_1\|=\|y_2\|=1$ with $\|t y_1 + (1-t)y_2\| = t\|y_1\| + (1-t)\|y_2\| = 1$ for all $t \in [0,1]$.
    Define $Y$ to be the linear span of $y_1 -y_2$ and define the analytic function $f : Y \rightarrow \mathbb{R}$ where for each $x = t (y_1 -y_2) \in Y$ (for some $t \in \mathbb{R}$) we define $f(x) := \|t (y_1-y_2) + y_2\| - 1$.
    As $f$ is an analytic function and $f(x) = 0$ for all $x= ty$ for $t \in [0,1]$, it is the constant zero function by Proposition \ref{p:anvar}.
    However this implies the unit ball contains the unbounded line through $y_1$ and $y_2$, contradicting compactness.
\end{proof}

As we shall see during this section, analytic normed spaces are well suited to rigidity theory as we can discuss ``generic'' properties without recourse to the usual polynomial notion of algebraic independence.

\begin{prop}\label{prop:regcreg}
	Let $X$ be an analytic normed space and $G=(V,E)$ be a graph.
	Then the sets $\reg (G;X)$ and $\creg (V;X)$ are open conull subsets of $X^{V}$.
\end{prop}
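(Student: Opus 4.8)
The plan is to realise $\reg(G;X)$ as the complement within $X_G$ of the zero set of a single non-trivial analytic function, and then to obtain $\creg(V;X)$ from finitely many such sets. The key preliminary step is to check that the entries of the rigidity matrix $R(G,p)$ depend analytically on $p$ throughout $X_G$. The entry attached to an edge $e=vw$ and basis vector $e_i$ is $\varphi_{p_v-p_w}(e_i)$. Since $X$ is analytic, the map $z \mapsto \tfrac12\|z\|^2$ is analytic on $X \setminus \{0\}$, so its derivative $z \mapsto \varphi_z$ is analytic there too (derivatives of analytic maps are analytic). Precomposing with the linear map $p \mapsto p_v - p_w$, which is nonvanishing on $X_G$, and evaluating at $e_i$, shows each entry is analytic on $X_G$; recall also that, as $X$ is smooth, the well-positioned placements of $G$ are exactly the points of the open conull set $X_G$.

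Next I would set $r := \max_{p \in X_G} \rank R(G,p)$ and let $M_1,\dots,M_k$ denote the $r\times r$ minors of $R(G,\cdot)$. Each $M_j$ is a polynomial in the (analytic) entries and hence analytic on $X_G$, so $F := \sum_j M_j^2$ is analytic, with $F(p)=0$ exactly when $\rank R(G,p) < r$. Thus $\reg(G;X) = X_G \setminus Z(F) = F^{-1}(\mathbb{R}\setminus\{0\})$ is open by continuity of $F$, and $F$ is not identically zero by the choice of $r$. Since $X_G$ is connected (being path-connected as $\dim X > 1$), Proposition \ref{p:anvar} shows $Z(F)$ is a closed null subset of $X_G$; then $X^V \setminus \reg(G;X) = (X^V \setminus X_G) \cup Z(F)$ is a union of two null sets, so $\reg(G;X)$ is conull.

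For $\creg(V;X)$ I would use that there are only finitely many graphs $H$ on the vertex set $V$, together with the identity $\creg(V;X) = \bigcap_{H:\,V(H)=V} \reg(H;X)$ coming straight from the definitions. By the previous paragraph each $\reg(H;X)$ is open and conull, and a finite intersection of open conull sets is again open and conull, which completes the argument.

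The step I expect to be the main obstacle is establishing the analyticity of $p \mapsto \varphi_{p_v-p_w}(e_i)$, as this is exactly where the hypothesis that $X$ is analytic is used, via the identification of $\varphi_z$ with the derivative of $\tfrac12\|z\|^2$. A secondary point needing care is the connectedness of $X_G$ required by Proposition \ref{p:anvar}: this holds when $\dim X > 1$, while the one-dimensional case can be treated on each connected component of $X_G$, using the symmetry of the line to ensure the maximal rank is attained on every component.
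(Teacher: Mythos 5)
Your proposal is correct and follows essentially the same route as the paper: analytic dependence of the rigidity-matrix entries on $p$ (via $\varphi_z$ being the derivative of $z\mapsto \tfrac12\|z\|^2$), a non-trivial analytic minor function on the connected open set $X_G$, Proposition \ref{p:anvar}, and a finite intersection for $\creg(V;X)$. The only notable differences are cosmetic: you sum the squares of all maximal minors, which yields openness of $\reg(G;X)$ for free, whereas the paper fixes a single non-vanishing minor and cites \cite[Lemma 4.4]{D19} for openness; and in the one-dimensional case the paper simply observes that every injective placement is completely regular, which is safer than your appeal to ``symmetry of the line'' (isometries of $\mathbb{R}$ do not relate all components of $X_G$, although the rank is in fact constant on $X_G$ there, so your conclusion still holds).
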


\begin{proof}
	If $\dim X=1$ then this holds immediately since any injective placement is completely regular.
	Suppose $\dim X \geq 2$ and choose any graph $G$.
	By \cite[Lemma 4.4]{D19},
	$\reg(G;X)$ is an open set.
	Choose any regular placement $p$ of $G$ with $\rank R(G,p) =n$.
	Let $A:= \{e_1, \ldots, e_n\} \subseteq E$ and $B := \{(v_1,m_1), \ldots, (v_n,m_n)\} \subseteq V \times \{1, \ldots,d\}$ be sets chosen so that the square submatrix $M(p)$ of $R(G,p)$ formed by the rows $A$ and columns $B$ has non-zero determinant.
	Define the function $\phi: X_G \rightarrow \mathbb{R}$ with $\phi(q) := \det M(q)$,
	where $M(q)$ is the square submatrix of $R(G,q)$ formed by the rows $A$ and columns $B$.
	As the norm $\| \cdot \|$ is analytic on $X \setminus \{0\}$,
	$\phi$ is an analytic function with an open connected domain.
	By Proposition \ref{p:anvar},
	the zero set $Z(\phi) := \{ q \in X_G : \phi(q) =0\}$ is a closed null set.
	It follows that $\reg(G;X) \supset X \setminus Z(\phi)$,
	hence $\reg(G;X)$ is conull.
	
	Let $\mathcal{G}$ be the set of graphs with vertex set $V$.
	As $\creg(V;X) = \bigcap_{H \in \mathcal{G}} \reg (H;X)$ then $\creg(V;X)$ is also an open conull set.
\end{proof}

To prove a similar result for the sets $\sreg(G;X)$ and $\csreg (V;X)$ we need Sard's theorem.

\begin{thm}(see \cite[Theorem 3.6.3]{manifold}\label{t:sard})
	Let $\mathcal{O} \subset \mathbb{R}^m$ be an open subset and $f: \mathcal{O} \rightarrow \mathbb{R}^n$ be $C^\infty$-differentiable.
	If we define
	\begin{align*}
		C := \left\{x \in \mathcal{O} : \rank df(x) < \sup_{y \in \mathcal{O}} \rank df(y) \right\}.
	\end{align*}
	then $f(C)$ is a null subset of $\mathbb{R}^n$.
\end{thm}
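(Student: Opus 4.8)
The statement is a (mild) reformulation of the classical Sard theorem, so my plan is two-fold: first reduce it to the classical \emph{critical values} form --- that the image of the set $\{x : \rank df(x) < n\}$ is null --- and then establish that classical form by the standard inductive argument. The reduction is essentially free; the genuine work is the inductive proof of the classical statement, and that is where the single hard estimate lives.

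\textbf{The reduction.} Write $r := \sup_{y \in \mathcal{O}} \rank df(y)$. Since $df(x)$ is (the Jacobian, viewed as) a linear map $\mathbb{R}^m \to \mathbb{R}^n$, we always have $\rank df(x) \le \min(m,n) \le n$, and hence $r \le n$. Consequently every $x \in C$ satisfies $\rank df(x) < r \le n$, so $C$ is contained in the classical critical set $C_{\mathrm{crit}} := \{x \in \mathcal{O} : \rank df(x) < n\}$. Therefore $f(C) \subseteq f(C_{\mathrm{crit}})$, and it suffices to show $f(C_{\mathrm{crit}})$ is null, as any subset of a null set is null. This observation is the whole point of the "$<$ maximal rank'' formulation: it is never stronger than the usual one.

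\textbf{The classical statement.} For $f(C_{\mathrm{crit}})$ I would run the standard inductive argument on the domain dimension $m$ (the cases $m=0$ or $n=0$ being trivial). The key device is the decreasing filtration $C_{\mathrm{crit}} = \Sigma_0 \supseteq \Sigma_1 \supseteq \Sigma_2 \supseteq \cdots$, where $\Sigma_k$ is the set of points at which every partial derivative of $f$ of order $\le k$ vanishes (so $\Sigma_1$ already forces $df = 0$). I would then bound the image of each layer separately. On an intermediate layer $\Sigma_k \setminus \Sigma_{k+1}$ some partial derivative of order $k+1$ is nonzero, so an order-$k$ derivative $w$ vanishes on $\Sigma_k$ while $\partial w/\partial x_s \neq 0$; this lets one locally realise $\Sigma_k$ inside the hypersurface $w^{-1}(0)$, restrict $f$ to it, apply the inductive hypothesis in dimension $m-1$, and use Fubini's theorem to assemble the level sets into a null set. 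Covering by countably many such charts (Lindel\"of) then kills every intermediate layer.

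\textbf{Main obstacle.} The delicate, quantitative step is the top layer: showing $f(\Sigma_k)$ is null for $k$ large. Here I expect the bookkeeping to be the real obstacle. On a compact piece of $\Sigma_k$, Taylor's theorem gives a remainder bound $\|f(x+h) - f(x)\| \lesssim \|h\|^{k+1}$; subdividing a cube of side $\delta$ into $N^m$ subcubes of side $\delta/N$, the image of each subcube meeting $\Sigma_k$ lies in a box of edge $\lesssim (\delta/N)^{k+1}$ and hence of volume $\lesssim (\delta/N)^{n(k+1)}$, so the total image volume is $\lesssim N^m (\delta/N)^{n(k+1)} = \delta^{n(k+1)} N^{m - n(k+1)}$, which tends to $0$ as $N \to \infty$ precisely once $n(k+1) > m$. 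Choosing $k$ above this threshold closes the induction; the care needed is in matching this threshold against the intermediate layers and in verifying that the smoothness hypothesis supports all the Taylor estimates, for which the assumed $C^\infty$-differentiability is comfortably sufficient.
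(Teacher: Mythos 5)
The paper does not prove this statement at all --- it is quoted verbatim from \cite[Theorem 3.6.3]{manifold} (Marsden--Ratiu--Abraham) and used as a black box, so there is no internal proof to compare against. Your reduction is correct and is exactly the right first move: since $r := \sup_y \rank df(y) \le \min(m,n) \le n$, the set $C$ of the statement sits inside the classical critical set $C_{\mathrm{crit}} = \{x : \rank df(x) < n\}$, so the ``$<$ maximal rank'' formulation follows from the usual critical-values form of Sard's theorem. The remainder of your outline is the standard Milnor/Sternberg induction on $m$ (filtration by vanishing of higher derivatives, hypersurface reduction for the intermediate layers, Taylor-remainder cube counting for the top layer with threshold $n(k+1) > m$), and the arithmetic of the top-layer estimate is right; $C^\infty$ smoothness indeed makes the differentiability bookkeeping a non-issue. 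The one place your sketch needs disentangling is the bottom layer $\Sigma_0 \setminus \Sigma_1$ (where $df \neq 0$ but $\rank df < n$): the mechanism you describe for ``intermediate layers'' --- an order-$k$ derivative $w$ vanishing on $\Sigma_k$ with $\partial w/\partial x_s \neq 0$ --- only makes sense for $k \ge 1$, since for $k = 0$ the ``order-$0$ derivative'' would be a component of $f$ itself, which does not vanish on the critical set. That layer requires the separate argument in which one straightens coordinates so that $f$ maps the slices $\{x_1 = t\}$ into the slices $\{y_1 = t\}$, applies the inductive hypothesis on each slice, and only then invokes Fubini; you name Fubini but attach it to the wrong step. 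This is a presentational conflation rather than a missing idea --- all the ingredients of the standard proof are present --- so the proposal is correct in outline.
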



\begin{lem}\label{l:submer}
	Let $\mathcal{O} \subset \mathbb{R}^m$ be an open subset and $f: \mathcal{O} \rightarrow \mathbb{R}^n$ be a submersion\footnote{i.e.\ a differentiable map where the derivative is surjective at all points of the domain.}.
	If $A$ is a null subset of $\mathbb{R}^n$, then $f^{-1}(A)$ is a null subset of $\mathcal{O}$.
\end{lem}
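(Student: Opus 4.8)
The plan is to reduce the global statement to a local one and then exploit the normal form for submersions. Since $\mathbb{R}^m$ is Lindel\"of, any open cover of $\mathcal{O}$ admits a countable subcover, and a countable union of null sets is null; hence it suffices to show that every point $x_0 \in \mathcal{O}$ has an open neighbourhood $U$ for which $f^{-1}(A) \cap U$ is null.

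First I would straighten $f$ near $x_0$. Since $df(x_0) : \mathbb{R}^m \to \mathbb{R}^n$ is surjective, after permuting coordinates we may assume the first $n$ columns of $df(x_0)$ are linearly independent. Define $F : \mathcal{O} \to \mathbb{R}^m$ by $F(x) = (f(x), x_{n+1}, \ldots, x_m)$; a block-determinant computation shows $dF(x_0)$ is invertible, so by the inverse function theorem there is an open neighbourhood $U$ of $x_0$ on which $F$ restricts to a diffeomorphism onto an open set $V := F(U)$. Writing $\pi : \mathbb{R}^m \to \mathbb{R}^n$ for the projection onto the first $n$ coordinates, we have $f = \pi \circ F$ on $U$, and therefore
\[
f^{-1}(A) \cap U = (F|_U)^{-1}\left( \pi^{-1}(A) \cap V \right).
\]

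Next I would show $\pi^{-1}(A)$ is null. Since $\pi^{-1}(A) = A \times \mathbb{R}^{m-n} = \bigcup_{k \in \mathbb{N}} A \times [-k,k]^{m-n}$, Fubini's theorem gives that each $A \times [-k,k]^{m-n}$ has Lebesgue measure $\lambda_n(A)\cdot(2k)^{m-n} = 0$; hence $\pi^{-1}(A)$ is null, as is its subset $\pi^{-1}(A) \cap V$. Finally, $(F|_U)^{-1}$ is continuously differentiable as the inverse of a diffeomorphism, hence locally Lipschitz, and locally Lipschitz maps send null sets to null sets; so $f^{-1}(A) \cap U$ is null, completing the local step.

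This result is standard, so I expect no deep obstacle; the conceptual heart is the observation that a submersion locally factors through the coordinate projection, together with the Fubini argument that the projection pulls null sets back to null sets. The only point requiring care is regularity: both the inverse function theorem and the fact that the local inverse preserves null sets need $f$ to be at least continuously differentiable, which holds automatically in all our applications where $f$ is analytic.
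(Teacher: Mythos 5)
Your proof is correct and follows essentially the same route as the paper's: localise, put the submersion into its normal form as a coordinate projection composed with a diffeomorphism, note that the projection pulls null sets back to null sets (Fubini) and that diffeomorphisms preserve null sets, then conclude by a countable subcover. The only cosmetic difference is that you build the straightening map $F$ explicitly via the inverse function theorem, whereas the paper invokes the constant rank theorem as a black box (which also silently requires $C^1$ regularity, a point you rightly flag).
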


\begin{proof}
	Choose any point $x \in \mathcal{O}$.
	By the constant rank theorem \cite[Theorem 2.5.15]{manifold},
	there exists an	open subset $U_x \subset \mathcal{O}$ with $x \in U_x$, open subsets $V_x,V' \subset \mathbb{R}^n$ with $f(U_x) \subset V_x$ and $0 \in V'$, an open subset $U' \subset \mathbb{R}^{m-n}$ with $0 \in U'$, and diffeomorphisms $\lambda: U_x \rightarrow U' \times V'$ and $\phi: V_x \rightarrow V'$ where $\lambda(x) = (0,0)$ and
	\begin{align*}
	    \phi \circ f \circ \lambda^{-1} : U' \times V' \rightarrow V', ~ (a,b) \mapsto b.
	\end{align*}
	As $\phi$ is a diffeomorphism, the set $\phi(A \cap V_x)$ is a null subset of $V'$ (see \cite[Lemma 3.6.1]{manifold}).
	Hence $U' \times \phi(A \cap V_x)$ is a null subset of $U' \times V'$.
	As
	\begin{align*}
	f^{-1}(A\cap V_x) \cap U_x = \lambda^{-1}\left ( \lambda \circ f^{-1} \circ \phi^{-1}[ \phi (A \cap V_x)]\right) = \lambda^{-1} \left( U' \times \phi (A \cap V_x) \right)
	\end{align*}
	and $\lambda$ is a diffeomorphism, the set $f^{-1}(A\cap V_x) \cap U_x$ is a null subset of $\mathcal{O}$ (see \cite[Lemma 3.6.1]{manifold}).
	By applying this same process to every point $x \in \O$,
	we obtain the open cover $\{U_x : x \in \mathcal{O}\}$ of $\O$ and the open cover $\{U_x : x \in \mathcal{O}\}$ of $f(\O)$,
	where for each $x \in \O$ we have $f(U_x) \subset V_x$ and the set $f^{-1}(A\cap V_x) \cap U_x$ is a null subset of $\mathcal{O}$.

	Choose a countable subset $\mathcal{O}' \subset \mathcal{O}$ so that $\{U_x : x \in \mathcal{O}'\}$ is an open cover of $\mathcal{O}$.
	Choose any $z \in f^{-1}(A)$ and pick a point $x \in \mathcal{O}'$ with $z \in U_x$.
	We note that $z \in f^{-1}(A \cap V_x) \cap U_x$,
	as $f^{-1}(V_x) \cap U_x =U_x$ and $z \in f^{-1}(A)$.
	It now follows that 
	\begin{align*}
	f^{-1}(A) \subset \bigcup_{x \in \mathcal{O}'} f^{-1}\left( A\cap V_x \right) \cap U_x.
	\end{align*}
	As each $f^{-1}(A \cap V_x) \cap U_x$ is a null subset of $\mathcal{O}$, the set $f^{-1}(A)$ is a null subset of $\mathcal{O}$.
\end{proof}

\begin{lem}\label{l:sard2}
	Let $X$ be a normed space, $G$ an independent graph in $X$, and $Z$ a null subset of $\mathbb{R}^{E}$.
	Suppose $\reg(G;X)$ is an open conull subset of $X^{V}$.
	Then the set $f_G^{-1}(Z)$ is a null subset of $X^V$.
\end{lem}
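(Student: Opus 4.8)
The plan is to split $f_G^{-1}(Z)$ according to whether a placement is regular, and to treat each piece separately.

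First I would upgrade the hypothesis on $\reg(G;X)$ to a submersion statement. Since $G$ is independent, there is a well-positioned placement at which $df_G$ has rank $|E|$; as the codomain $\mathbb{R}^E$ has dimension $|E|$, this is the maximal possible rank. Hence ``maximal rank'' in the definition of regular is exactly rank $|E|$, so for every $p \in \reg(G;X)$ the operator $df_G(p)$ is surjective onto $\mathbb{R}^E$. Because every regular placement is well-positioned, $f_G$ is differentiable throughout the open set $\reg(G;X)$. Identifying $X^V$ with $\mathbb{R}^{d|V|}$ via a choice of basis (under which null sets are preserved), the restriction $f_G|_{\reg(G;X)}$ is therefore a submersion from an open subset of $\mathbb{R}^{d|V|}$ onto $\mathbb{R}^E$.

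Next I would apply Lemma \ref{l:submer} to this submersion and the null set $Z$. This yields that $f_G^{-1}(Z) \cap \reg(G;X) = (f_G|_{\reg(G;X)})^{-1}(Z)$ is a null subset of $\reg(G;X)$, and hence a null subset of $X^V$, since a null subset of an open set is null in the ambient space.

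Finally I would dispose of the non-regular part. The set $f_G^{-1}(Z) \setminus \reg(G;X)$ is contained in $X^V \setminus \reg(G;X)$, which is null by the conull hypothesis on $\reg(G;X)$. Writing $f_G^{-1}(Z)$ as the union of these two null pieces shows that it is null. The only real content beyond the already-established Lemma \ref{l:submer} is the observation that independence forces the maximal rank to equal $|E|$, turning $f_G|_{\reg(G;X)}$ into a genuine submersion onto all of $\mathbb{R}^E$; everything else is bookkeeping, so I do not anticipate a serious obstacle.
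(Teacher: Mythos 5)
Your proposal is correct and follows essentially the same route as the paper: restrict $f_G$ to $\reg(G;X)$, observe that independence forces this restriction to be a submersion onto $\mathbb{R}^E$, apply Lemma \ref{l:submer} to handle $f_G^{-1}(Z)\cap\reg(G;X)$, and absorb the remainder into the null complement of $\reg(G;X)$. Your extra remark that independence pins the maximal rank at $|E|$ simply makes explicit what the paper leaves implicit.
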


\begin{proof}
	The restriction of $f_G$ to $\reg(G;X)$ is a submersion,
	hence $f_G(\reg(G;X))$ is a non-empty open subset of $\mathbb{R}^{E}$ by the local onto theorem (see \cite[Theorem 3.5.2]{manifold}).
	We note that $f_G(\reg(G;X)) \cap Z$ is a null set thus, by Lemma \ref{l:submer}, the set of all regular placements $p$ with $f_G(p) \in Z$ is also a null set.
	As $X^V \setminus \reg(G;X)$ is a null set, the result holds.
\end{proof}

\begin{prop}\label{p:comreg}
	Let $X$ be an analytic normed space and $G=(V,E)$ be a graph.
	Then the sets $\sreg (G;X)$ and $\csreg (V;X)$ are open conull sets.
\end{prop}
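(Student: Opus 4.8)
The plan is to deduce the completely strongly regular statement from the strongly regular one, and then to prove openness and conullity of $\sreg(G;X)$ separately. As $V$ is finite there are only finitely many graphs $H$ with $V(H)=V$, and directly from the definitions $\csreg(V;X)=\bigcap_H \sreg(H;X)$; since a finite intersection of open conull sets is open and conull, it suffices to treat $\sreg(G;X)$. Write $C:=X_G\setminus\reg(G;X)$ for the set of well-positioned but non-regular placements of $G$. Because $\reg(G;X)$ is exactly the locus where $df_G$ attains its maximal rank, $C$ is precisely the critical set of $f_G|_{X_G}$ appearing in Theorem \ref{t:sard}. A placement $p\in X_G$ fails to be strongly regular exactly when $f_G(p)=f_G(q)$ for some $q\in C$, so the non-strongly-regular placements are $f_G^{-1}(Z)$ where $Z:=f_G(C)$; note every coordinate of a point of $Z$ is strictly positive, so $f_G^{-1}(Z)\subseteq X_G$ and $\sreg(G;X)=X_G\setminus f_G^{-1}(Z)$.

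For conullity, Theorem \ref{t:sard} gives that $Z$ is a null subset of $\mathbb{R}^E$, and it remains to show $f_G^{-1}(Z)$ is null. Let $r$ be the maximal rank of $df_G$ and fix an $r$-element edge set $A\subseteq E$ for which $G_A:=(V,A)$ is independent; such an $A$ exists by selecting $r$ linearly independent rows of $R(G,p)$ at any regular $p$. Writing $\pi_A:\mathbb{R}^E\rightarrow\mathbb{R}^A$ for the coordinate projection, we have $\pi_A\circ f_G=f_{G_A}$, and every $q\in C$ satisfies $\rank df_{G_A}(q)\leq\rank df_G(q)<r=|A|$ and is therefore critical for $f_{G_A}$. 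Applying Theorem \ref{t:sard} to $f_{G_A}$ shows that $\pi_A(Z)=f_{G_A}(C)$ is null in $\mathbb{R}^A$. Since $f_G^{-1}(Z)\subseteq f_{G_A}^{-1}(\pi_A(Z))$, and $G_A$ is independent with $\reg(G_A;X)$ open conull by Proposition \ref{prop:regcreg}, Lemma \ref{l:sard2} yields that $f_{G_A}^{-1}(\pi_A(Z))$ is null. Hence $f_G^{-1}(Z)$ is null and $\sreg(G;X)$ is conull.

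The main obstacle is openness. Unlike the situation for proper maps, the set of regular values of $f_G$ need not be open, since $f_G$ is invariant under the non-compact translation subgroup of $\Iso(X)$; the idea is to use precisely this invariance to recover the missing compactness. Suppose $p\in\sreg(G;X)$ and $p_n\rightarrow p$ with $p_n\notin\sreg(G;X)$. As $X_G$ is open we may assume each $p_n\in X_G$, so there are $q_n\in C$ with $f_G(q_n)=f_G(p_n)$. Translating each connected component of $q_n$ so that a fixed vertex of that component lies at the origin alters neither $f_G(q_n)$ nor the rigidity matrix $R(G,\cdot)$, and so preserves membership of $C$; call the result $q_n'$. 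Since the edge lengths $f_G(q_n')=f_G(p_n)\rightarrow f_G(p)$ are bounded and every vertex is joined within its component to the pinned vertex by a path, the $q_n'$ are bounded, and we may pass to a convergent subsequence $q_n'\rightarrow q$. The limiting edge lengths $f_G(p)$ are strictly positive, forcing $q\in X_G$; continuity of $f_G$ gives $f_G(q)=f_G(p)$; and since the simultaneous vanishing of all $r\times r$ minors of $R(G,\cdot)$ is a closed condition on $X_G$, we obtain $q\in C$. Then $q$ is a non-regular placement equivalent to $p$, contradicting $p\in\sreg(G;X)$. Therefore $\sreg(G;X)$ is open, which together with the previous paragraph and the reduction above completes the proof. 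I expect this compactness-via-translation step to be the crux, whereas the conullity argument is essentially an application of Sard's theorem through the independent subgraph $G_A$.
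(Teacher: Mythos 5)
Your proof is correct and follows essentially the same strategy as the paper's: openness is obtained by pinning one vertex per connected component to recover compactness (the paper phrases this as properness/closedness of the pinned length map, you unpack it as a sequential compactness argument), and conullity is obtained from Sard's theorem together with Lemma \ref{l:sard2} applied through a maximal independent subgraph (the paper takes the union of $\sreg(H;X)$ over maximal independent spanning subgraphs $H$ and shows it sits inside $\sreg(G;X)$, whereas you project the critical values onto the edge set of one such subgraph and pull back --- the same reduction). The finite-intersection reduction of $\csreg(V;X)$ to the $\sreg(G;X)$ case is identical to the paper's.
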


\begin{proof}
	For each of the $k$ connected components of $G$, fix a vertex $w_i$.
	Define $Y := \{ (x_v)_{v \in V} : x_{w_i}=0 \text{ for all } 1 \leq i \leq k \}$ and $f := f_G|_Y$.
	Since $f$ is closed it is proper and so the set $C := f(X^V \setminus \reg(G;X))$ is closed.
	Also the set $\sreg(G;X) = X^V \setminus  f_G^{-1}(C)$ is open.
	
	If $G$ is independent then Theorem \ref{t:sard} and Lemma \ref{l:sard2} imply that
	$\sreg(G;X)$ is a conull subset of $X^V$.
	So $G$ is not independent.
	Let $S$ be the set of maximal independent spanning subgraphs of $G$.
	We note that the set $\bigcup_{H \in S} \sreg(H;X)$ is conull.
	Choose any placement $p$ from $\bigcup_{H \in S} \sreg(H;X)$ and any $p' \in f_G^{-1}[f_G(p)]$;
	we may suppose $p \in \sreg(H;X)$ for some $H \in S$.
	As $p' \in f_H^{-1}[f_H(p)]$ and $H$ is a maximally independent spanning subgraph of $G$,
	we have
	\begin{align*}
		\rank df_G(p') \geq \rank df_H(p') = \rank df_H(p) = \rank df_G(p),
	\end{align*}
	thus $p \in \sreg(G;X)$. 
	It now follows that 
	\begin{align*}
		\bigcup_{H \in S} \sreg(H;X) \subset \sreg(G;X), 
	\end{align*}
	and thus $\sreg(G;X)$ is a conull subset of $X^V$.
	Let $\mathcal{G}$ be the set of graphs with vertex set $V$.
	As $\csreg(V;X) = \bigcap_{H \in \mathcal{G}} \sreg (H;X)$ then $\csreg(V;X)$ is an open conull subset of $X^V$ also.
\end{proof}

\subsection{Global rigidity in smooth normed spaces with finitely many linear isometries}
At this point, to prove that a graph is globally rigid requires proving that every placement in an open set is globally rigid.
In this subsection we shall prove that it suffices to find a single placement that is both globally rigid and infinitesimally rigid.

First we need a definition.
Let $X$ and $Y$ be normed spaces, $U \subset X$ be an open subset and $f :U \rightarrow Y$ be a continuous map.
We say $f$ is \emph{proper} if $f^{-1}[C]$ is compact for all compact sets $C$;
equivalently,
$f$ is proper if every sequence $(x_n)_{n \in \mathbb{N}}$ in $U$, where $(f(x_n))_{n \in \mathbb{N}}$ is bounded, is itself bounded.

\begin{lem}\label{l:infglobal1}
	Let $X$ and $Y$ be normed spaces, $U \subset X$ be an open subset and $f :U \rightarrow Y$ be a proper $C^1$-differentiable map.
	Suppose for some point $x \in U$ there exists $k \in \mathbb{N}$ such that $|f^{-1} f[x]| = k$,
	and all points of $f^{-1} f[x]$ are regular.
	Then there exists an open neighbourhood $U' \subset U$ of $x$ where $|f^{-1} f[x']| \leq k$ for all $x' \in U'$.	
\end{lem}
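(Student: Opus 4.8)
The plan is to use regularity to make $f$ locally injective near each point of the fibre $f^{-1}f[x]$, then invoke properness to trap every nearby fibre inside these local injectivity neighbourhoods, and finish with a counting argument. Write $f^{-1}f[x] = \{y_1,\dots,y_k\}$ (so $x$ is one of the $y_i$). The first step is to observe that at each $y_i$ the derivative $df(y_i)$ is injective: since $y_i$ is a regular point, the rank of $df$ is maximal and hence locally constant near $y_i$, so by the constant rank theorem the local fibre of $f$ through $y_i$ is a submanifold of dimension $\dim X - \rank df(y_i)$; but $y_i$ is isolated in $f^{-1}f[x]$ (the whole fibre being finite), forcing this dimension to be $0$ and hence $\rank df(y_i) = \dim X$. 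A $C^1$ map with injective derivative at a point is locally injective there (inverse function theorem, or the immersion theorem when $\dim X < \dim Y$), so I obtain open neighbourhoods $W_i \ni y_i$ on which $f$ is injective, which I may shrink to be pairwise disjoint.

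The crux, and the step I expect to be the main obstacle, is the following claim: there is an open neighbourhood $V$ of $f(x)$ with $f^{-1}(V) \subseteq \bigcup_{i=1}^k W_i$. I would argue by contradiction using properness. If no such $V$ exists, then choosing balls $B(f(x),1/n)$ I can pick points $z_n \in f^{-1}(B(f(x),1/n)) \setminus \bigcup_i W_i$, so that $f(z_n) \to f(x)$. The set $C := \{f(x)\} \cup \{f(z_n) : n \in \mathbb{N}\}$ is compact, being a convergent sequence together with its limit, so properness of $f$ makes $f^{-1}(C)$ a compact subset of $U$ containing every $z_n$. Finite-dimensionality then gives (via Bolzano--Weierstrass / sequential compactness) a subsequence $z_{n_j} \to z^\ast \in U$, and continuity yields $f(z^\ast) = f(x)$, so $z^\ast = y_i$ for some $i$. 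But then $z_{n_j} \in W_i$ for all large $j$, contradicting $z_{n_j} \notin \bigcup_i W_i$. Properness is exactly what prevents the $z_n$ from escaping to the boundary of $U$ or to infinity, and without it the claim can fail; this is why I regard it as the heart of the proof.

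Finally, I would set $U' := f^{-1}(V)$, which is an open neighbourhood of $x$ since $f$ is continuous and $f(x) \in V$. For any $x' \in U'$ we have $f(x') \in V$, so every point of $f^{-1}f[x']$ lies in $f^{-1}(V) \subseteq \bigcup_i W_i$. Since $f$ is injective on each $W_i$, each $W_i$ can contain at most one point of the single fibre $f^{-1}f[x']$, and therefore $|f^{-1}f[x']| \leq k$, as required.
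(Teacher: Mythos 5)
Your proof is correct. It uses the same two ingredients as the paper's proof --- local injectivity at regular points of a finite fibre via the constant rank theorem, and properness plus sequential compactness to control nearby fibres --- but organises them differently. The paper argues by contradiction: it supposes $x_n \to x$ with $|f^{-1}f[x_n]| > k$, extracts $k+1$ sequences of distinct preimages, uses properness and Bolzano--Weierstrass to pass to limits lying in $f^{-1}f[x]$, and then pigeonholes to force two of the limits to coincide at a point where local injectivity gives the contradiction; notably it only needs local injectivity at that single coincidence point. You instead prove the stronger ``stack of records'' statement directly: you establish local injectivity at \emph{all} $k$ fibre points, build pairwise disjoint injectivity neighbourhoods $W_i$, and show via properness that some saturated neighbourhood $f^{-1}(V)$ of the fibre is trapped in $\bigcup_i W_i$, after which the bound is a count. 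Your route is the standard argument that a proper local homeomorphism has locally constant (here, locally non-increasing) fibre cardinality; it yields slightly more (an explicit saturated $U'$) and avoids the mildly delicate simultaneous subsequence extraction across $k+1$ sequences that the paper's proof glosses over. Both proofs derive the key injectivity step identically, from regularity plus finiteness of the fibre forcing $\rank df = \dim X$ --- a point you actually spell out more carefully than the paper does.
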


\begin{proof}
	Suppose there exists a sequence $(x_n)_{n\in \mathbb{N}}$ in $U$ where $x_n \rightarrow x$ as $n \rightarrow \infty$ and $f^{-1}f[x_n] > k$.
	By assumption we may choose $k+1$ sequences $(y^1_{n})_{n\in \mathbb{N}}, \ldots, (y^{k+1}_n)_{n\in \mathbb{N}}$ where $y^1_n=x_n$,
	$y^i_n \neq y^j_n$ and $f(y^j_n)= f(x_n)$ for each $n \in \mathbb{N}$ and $1\leq i < j \leq k+1$.
	As $f$ is proper,
	each sequence $(y^i_n)_{n\in \mathbb{N}}$ is bounded.
	By the Bolzano-Weierstrass Theorem,
	we may assume that we chose $(x_n)_{n\in \mathbb{N}}$ such that each sequence $(y^{i}_n)_{n\in \mathbb{N}}$ converges to a limit $y^i$.
	As $f(x) = f(y^i)$ for each $i$ then $y^1, \ldots,y^{k+1} \in f^{-1} f[x]$.
	Since $|f^{-1} f[x]| = k$,
	there exists $i \neq j$ such that $y^i = y^j$.
	
	Let $x' := y^i = y^j$.
	Since $x'$ is regular and $|f^{-1} f[x']| = k$, it follows from the constant rank theorem (see \cite[Theorem 2.5.15]{manifold}) that there exists an open neighbourhood $U'$ of $x'$ where $f|_{U'}$ is injective.
	Since both $(y^{i}_n)_{n\in \mathbb{N}}$ and $(y^{j}_n)_{n\in \mathbb{N}}$ converge to $x'$,
	we have $y^i_n=y^j_n$ for sufficiently large $n$. This contradicts our hypotheses.
\end{proof}

\begin{lem}\label{l:infglobal2}
	Let $X$ be a normed space with $k$ linear isometries and $(G,p)$ be isometrically full.
	Suppose $p_{v_0} =0$ for some $v_0 \in V$.
	Then given
	\begin{align*}
		S:= \{ p' \in X^V : f_G(p')= f_G(p), ~ p'_{v_0} = 0 \},
	\end{align*} 
	$(G,p)$ is globally rigid if and only if $|S| \leq k$.
\end{lem}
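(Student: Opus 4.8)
The plan is to reduce global rigidity to a counting statement about \emph{pinned} placements, exploiting the fact (Theorem \ref{t:mazurulam}) that every isometry of $X$ is affine, hence of the form $x \mapsto Ax + b$ with $A \in \Isolin(X)$ a linear isometry and $b \in X$ a translation.

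First I would establish that $(G,p)$ is globally rigid if and only if every element of $S$ is congruent to $p$. The forward direction is immediate, since each $p' \in S$ is equivalent to $p$. For the converse, given any $q$ with $f_G(q) = f_G(p)$, I would translate by $-q_{v_0}$ to produce $q' := q - q_{v_0}$; as the norm depends only on differences, $f_G$ is translation invariant, so $f_G(q') = f_G(p)$ and $q'_{v_0} = 0$, giving $q' \in S$. Any congruence of $q'$ to $p$ then yields a congruence of $q$ to $p$ by composing with the translation. Thus only the pinned placements in $S$ need to be analysed.

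Next I would identify exactly which elements of $S$ are congruent to $p$. If $p' \in S$ is witnessed congruent to $p$ by $g(x) = Ax + b$, then evaluating at $v_0$ and using $p_{v_0} = p'_{v_0} = 0$ forces $b = 0$, so $g = A$ is linear. Conversely, for every $A \in \Isolin(X)$ the placement $A \circ p$ lies in $S$, since $A$ preserves the norm (so $f_G(A \circ p) = f_G(p)$) and fixes the origin (so $(A\circ p)_{v_0} = 0$); it is plainly congruent to $p$. Hence the congruent-to-$p$ elements of $S$ are precisely the images $\{A \circ p : A \in \Isolin(X)\}$. Using that $(G,p)$ is isometrically full I would show the map $A \mapsto A \circ p$ is injective: if $A \circ p = A' \circ p$ then $(A')^{-1}A$ is an isometry fixing $p$, hence the identity, so $A = A'$. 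As $|\Isolin(X)| = k$, there are exactly $k$ such elements, and in particular $|S| \geq k$ always holds.

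Combining the two steps completes the argument: $(G,p)$ is globally rigid $\iff$ every element of $S$ is congruent to $p$ $\iff$ $S$ coincides with the $k$-element set of linear-isometry images $\iff$ $|S| = k$, which (given the unconditional bound $|S| \geq k$) is equivalent to $|S| \leq k$. The only delicate point is the pinning normalisation: one must check that fixing $v_0$ at the origin exactly kills the translational part of $\Iso(X)$, so that congruence among pinned placements is governed solely by the finite group $\Isolin(X)$. Once that is in place, the injectivity count supplied by isometric fullness is what converts the geometric condition into the clean numerical criterion $|S| \leq k$.
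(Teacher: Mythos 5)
Your proposal is correct and follows essentially the same route as the paper: identify the elements of $S$ congruent to $p$ as exactly the orbit $\Isolin(X) \cdot p$, use isometric fullness to see this orbit has exactly $k$ elements, and conclude that global rigidity is equivalent to $|S| \leq k$. You simply make explicit the details the paper leaves implicit, namely the translation-pinning reduction and the observation that a congruence between placements pinned at the origin must be linear.
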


\begin{proof}
	We first note that $\Isolin (X)\cdot p \subseteq S$,
	and $(G,p)$ is globally rigid if and only if $S = \Isolin (X)\cdot p$.
	As $p$ is isometrically full,
	we have that $|\Isolin(X) \cdot p|=k$.
	Hence $|S|\leq k$ if and only if $|S|=k$ if and only if $S = \Isolin (X)\cdot p$.
\end{proof}

\begin{lem}\label{l:quotcount}
	Let $X$ be a $d$-dimensional smooth normed space,
	$G = (V,E)$ and $p \in X_G$ be a placement of $G$ with $\dim \ker df_G(p) < d-1 + |V|$.
	Then $p$ is isometrically full.
\end{lem}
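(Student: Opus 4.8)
The plan is to prove the contrapositive: assuming $p$ is not isometrically full, I will exhibit a subspace of $\ker df_G(p)$ of dimension at least $d-1+|V|$. Since $p$ is not isometrically full, there is an isometry $g\neq\iota$ with $g\circ p=p$. By Theorem \ref{t:mazurulam} the map $g$ is affine, say $g(x)=Lx+c$ with $L$ a linear isometry; subtracting $g(p_v)=p_v$ and $g(p_w)=p_w$ gives $L(p_v-p_w)=p_v-p_w$, so $L$ fixes every difference $p_v-p_w$, and $L\neq\iota$ (otherwise $g$ would be a translation with a fixed point, hence the identity).

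The key step is to produce a single nonzero vector $e\in X$ annihilated by all of the functionals $\varphi_{p_v-p_w}$ with $vw\in E$. First I would verify that each such functional is $L$-invariant, i.e. $\varphi_{p_v-p_w}\circ L=\varphi_{p_v-p_w}$. This follows from the uniqueness clause in the definition of a smooth point: writing $y:=p_v-p_w\neq 0$, the functional $\varphi_y\circ L$ satisfies $(\varphi_y\circ L)(y)=\varphi_y(y)=\|y\|^2$ (as $Ly=y$) and $\|\varphi_y\circ L\|=\|\varphi_y\|=\|y\|$ (as $L$ is a surjective linear isometry), so by uniqueness $\varphi_y\circ L=\varphi_y$. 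Now pick any $z$ with $Lz\neq z$ (possible since $L\neq\iota$) and set $e:=Lz-z\neq 0$. For every edge $vw$ I then get $\varphi_{p_v-p_w}(e)=\varphi_{p_v-p_w}(Lz)-\varphi_{p_v-p_w}(z)=(\varphi_{p_v-p_w}\circ L)(z)-\varphi_{p_v-p_w}(z)=0$.

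With $e$ in hand the flexes write themselves. For each $c\in X$ and each $t=(t_v)_{v\in V}\in\mathbb{R}^V$ define $u^{(c,t)}\in X^V$ by $u^{(c,t)}_v:=c+t_v e$. Then for every edge $vw$ one has $df_G(p)(u^{(c,t)})_{vw}=\varphi_{p_v-p_w}\big((t_v-t_w)e\big)=(t_v-t_w)\varphi_{p_v-p_w}(e)=0$, so each $u^{(c,t)}$ lies in $\ker df_G(p)$. The linear assignment $(c,t)\mapsto u^{(c,t)}$ has one-dimensional kernel (exactly $c=-\tau e$, $t_v\equiv\tau$), so its image is a subspace of $\ker df_G(p)$ of dimension $(d+|V|)-1=d-1+|V|$. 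This contradicts the hypothesis $\dim\ker df_G(p)<d-1+|V|$ and finishes the contrapositive.

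The main conceptual obstacle is that non-isometric-fullness only supplies a possibly \emph{discrete} symmetry $g$ of $(G,p)$ (for instance a reflection), and such a symmetry does not obviously contribute any infinitesimal flex. The crux is the observation that invariance of the duality map $x\mapsto\varphi_x$ under the linear isometry $L$ forces the whole family $\{\varphi_{p_v-p_w}:vw\in E\}$ to share a common null direction $e$ transverse to $\spann\{p_v-p_w\}$; this is precisely what converts the rigid symmetry into $|V|-1$ genuinely new ``out-of-plane'' flexes beyond the $d$ translations. Two degenerate points I would keep an eye on: the dimension count uses only $e\neq 0$ (I never need $e\notin\spann\{p_v-p_w\}$), and I never pass to the complete graph — since adding edges only shrinks the kernel, the bound for $G$ holds directly, which conveniently sidesteps any difficulty with coincident non-adjacent vertices where $\varphi_{p_v-p_w}$ would be undefined.
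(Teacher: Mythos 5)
Your proof is correct and follows essentially the same route as the paper's: both argue by contraposition, establish the invariance $\varphi_{p_v-p_w}\circ L=\varphi_{p_v-p_w}$ for the linear part $L$ of the isometry fixing $p$, extract a common null direction for the edge functionals, and then span a $(d-1+|V|)$-dimensional subspace of $\ker df_G(p)$ using translations together with single-vertex flexes in that direction. The only real difference is how the null direction is produced: the paper splits into cases according to whether $\spann\{\varphi_y : y\in\spann\{p_v : v\in V\}\}$ equals $X^*$ (deriving $g=\iota$ in the spanning case), whereas your choice $e=Lz-z$ for any $z$ with $Lz\neq z$ yields the null vector directly and neatly dispenses with the case analysis.
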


\begin{proof}
	Suppose for contradiction that $\dim \ker df_G(p) < d-1 + |V|$ but $p$ is not isometrically full.
	By translating if necessary,
	we will assume $p_{v_0}=0$ for some $v_0 \in V$.
	Define $Y := \spann \{ p_v : v \in V\}$ and $W := \spann \{\varphi_y : y \in Y\}$.
	Since $p$ is not isometrically full,
	there exists a non-trivial linear isometry $g$ where $g(y)=y$ for all $y \in Y$.
	Denote the dual space\footnote{This is the normed space of all linear functions $f:X \rightarrow \mathbb{R}$ on $X$ with the norm $\|f\|^* := \sup_{\|x\|=1}|f(x)|$.} of $X$ by $X^*$.
	It is immediate that $W$ is a subspace of $X^*$.
	First suppose that $W=X^*$.
	Define the map $T:X^* \rightarrow X^*$ where $T(f) = f \circ g^{-1}$ for all $f \in X^*$.
	Since $g$ is an isometry of $X$,
	the map $T$ is a linear isometry of the normed space $X^*$.
	We recall that the support functional $\varphi_x$ is the derivative of $\frac{1}{2}\|\cdot\|^2$ at a point $x$.
	Hence, for any $x,y \in X$,
	\begin{align*}
		T(\varphi_y)(x) = \lim_{t \rightarrow 0} \frac{1}{t}\left( \left\| y + t g^{-1}(x) \right\| - \|y\|\right) = 
		\lim_{t \rightarrow 0} \frac{1}{t}\left( \left\| g(y) + t (x) \right\| - \|g(y)\|\right) = \varphi_{g(y)}(x).
	\end{align*}
	As $g(y) = y$, we have $T(\varphi_y) = \varphi_{y}$ for any $y \in Y$.
	Since $T$ is linear we have $T(f)=f$ for all $f \in X^*$.
	However, this implies $g$ is the identity map,
	a contradiction.
	Now suppose that $W \neq X^*$,
	i.e.~there exists non-zero $x \in X$ where $f(x)=0$ for all $f \in W$.
	Define for each $w \in V$ the vector $u^w =( u^w_v)_{v \in V} \in X^V$,
	where for each $v \in V$ we have
	\begin{align*}
		u^w_v :=
		\begin{cases}
			x, & \text{if } v =w, \\
			0, & \text{otherwise.}
		\end{cases}
	\end{align*}
	Then $df_G(p)(u^w) =0$ for each $w \in V$.
	If we choose vectors $B := \{x_1, \ldots,x_{n-1} \} \subset X$ such that $B \cup \{x\}$ is a basis of $X$,
	then
	\begin{align*}
	    S := \left\{(x_i)_{v \in V} :1 \leq i \leq d-1 \right\} \cup \left\{u^w :w \in V \right\}
	\end{align*}
	is a linearly independent subset of $\ker df_G(p)$ with $|S| = d-1 +|V|$.
	However this implies $\dim \ker df_G(p) \geq d-1 +|V|$ which
	contradicts our hypothesis.
\end{proof}

We can now prove our first main result, an analogue of the ``averaging theorem'' of Connelly and Whiteley \cite{CW}.

\begin{thm}\label{thm:ave}
	Let $(G,p)$ be globally rigid and infinitesimally rigid in a smooth normed space $X$ with a finite number of linear isometries.
	Then there exists an open neighbourhood $U$ of $p$ such that $(G,q)$ is globally rigid and infinitesimally rigid in $X$ for all $q \in U$.
\end{thm}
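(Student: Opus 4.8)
The plan is to establish two open conditions — infinitesimal rigidity and global rigidity — separately, and then intersect the resulting neighbourhoods. Throughout I would assume $|V|\geq 2$ (the case $|V|=1$ being trivial) and, after translating, that $p_{v_0}=0$ for a fixed vertex $v_0$, since both properties are translation-invariant. Two structural observations drive everything. First, as $X$ has only finitely many linear isometries, the identity component of $\Iso(X)$ consists of translations, so $\dim\Iso(X)=d$ and $\mathcal{T}(q)=\{(t,\dots,t):t\in X\}$ has dimension exactly $d$ for every placement $q$ (the infinitesimal translations are always present, and $\dim\mathcal{T}(q)\leq\dim\Iso(X)=d$). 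Second, $G$ must be connected, since otherwise a single component could be translated independently, producing a flex outside $\mathcal{T}(p)$ and contradicting infinitesimal rigidity.

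\textbf{Infinitesimal rigidity is open.} Since $(G,p)$ is infinitesimally rigid, $\ker df_G(p)=\mathcal{T}(p)$ and hence $\rank df_G(p)=d|V|-d$; this is the maximum possible rank, because $\mathcal{T}(q)\subseteq\ker df_G(q)$ forces $\rank df_G(q)\leq d|V|-d$ for every $q\in X_G$. As $X$ is smooth, $df_G$ is continuous on $X_G$, so rank is lower semicontinuous and the two bounds combine to give $\rank df_G(q)=d|V|-d$ for all $q$ in a neighbourhood $U_1$ of $p$. For such $q$ we get $\dim\ker df_G(q)=d=\dim\mathcal{T}(q)$ with $\mathcal{T}(q)\subseteq\ker df_G(q)$, so $(G,q)$ is infinitesimally rigid; moreover $\dim\ker df_G(q)=d<d-1+|V|$, so Lemma \ref{l:quotcount} shows every such $q$ is isometrically full.

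\textbf{Global rigidity is open.} Write $Y:=\{x\in X^V:x_{v_0}=0\}$, $Y_G:=Y\cap X_G$, and $f:=f_G|_{Y_G}$, which is $C^1$ because $X$ is smooth. Since $G$ is connected and the $v_0$-coordinate is pinned, a bounded image forces a bounded preimage, so $f$ is proper. Let $S:=f^{-1}f[p]$. By global rigidity each $p'\in S$ equals $g\circ p$ for an affine isometry $g$ (Theorem \ref{t:mazurulam}), and $p'_{v_0}=g(0)=0$ forces $g$ to be a linear isometry; thus $S=\Isolin(X)\cdot p$, which has exactly $k$ elements as $p$ is isometrically full. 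Each $p'=L\circ p$ is again infinitesimally rigid, so $\ker df_G(p')=\mathcal{T}(p')$, and since a translation flex lies in $Y$ only if it is trivial we have $\mathcal{T}(p')\cap Y=0$; hence $df(p')=df_G(p')|_Y$ is injective, i.e.\ every point of $S$ is a regular point of $f$. Lemma \ref{l:infglobal1} then supplies a neighbourhood $U'\subseteq Y_G$ of $p$ with $|f^{-1}f[x']|\leq k$ for all $x'\in U'$. After intersecting $U'$ with $U_1$, each such $x'$ is isometrically full and infinitesimally rigid, so by Lemma \ref{l:infglobal2} the inequality $|f^{-1}f[x']|\leq k$ says precisely that $(G,x')$ is globally rigid.

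Finally, I would transfer the sliced neighbourhood $U'$ to a genuine neighbourhood of $p$ in $X^V$: for $q$ close to $p$, the translate $q-(q_{v_0},\dots,q_{v_0})$ lies in $U'$, and since global and infinitesimal rigidity are translation-invariant, $(G,q)$ inherits both; taking $U$ to be this neighbourhood (intersected with $U_1$) completes the argument. The step I expect to require the most care is verifying that every point of $S$ is regular for the sliced map $f$: this is exactly where global rigidity is used, namely to identify $S$ with the finite linear-isometry orbit of $p$ and thereby transport the injectivity of $df_G(\cdot)|_Y$ from $p$ to all of $S$. Properness of $f$ and the repeated bookkeeping with $\mathcal{T}(q)\cap Y=0$ and Lemma \ref{l:quotcount} are the remaining points that need attention.
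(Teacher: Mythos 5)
Your proof is correct and follows essentially the same route as the paper: pin $v_0$ at the origin, apply Lemma \ref{l:infglobal1} to the sliced rigidity map $f_G|_Y$ to bound fibre sizes near $p$, obtain isometric fullness of nearby placements via Lemma \ref{l:quotcount}, and conclude with Lemma \ref{l:infglobal2}. You are in fact somewhat more careful than the printed proof, which leaves implicit both the verification that every point of the fibre $f^{-1}f[p]$ is a regular point of the sliced map (your transport of infinitesimal rigidity along the linear-isometry orbit) and the final translation-invariance step back from the slice $Y$ to a genuine neighbourhood in $X^V$.
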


\begin{proof}
	By translating $(G,p)$ we may assume $p_{v_0} = 0$ for some $v_0 \in V$.
Define 
	\begin{align*}
		Y := \{ x \in X^V : x_{v_0} = 0 \}, \quad Z := \mathbb{R}^{E}, \quad f:= f_G|_Y, \quad k := |f^{-1}(p)|.
	\end{align*}
	The graph $G$ must be connected as $(G,p)$ is infinitesimally rigid,
	so the map $f:Y \rightarrow Z$ is proper.
	By Lemma \ref{l:infglobal1},
	there exists an open neighbourhood $U \subset Y$ of $p$ where $\left|f^{-1} f[q] \right| \leq k$ for all $q \in U$.
	Since the set of regular placements of a graph in $X$ is open,
	we may assume we chose $U$ sufficiently small such that each $q \in U$ will be infinitesimally rigid.
    As $\Isolin(X)$ is a finite set,
	$\dim\mathcal{T}(q) = d$ for every $q \in U$.
	By Theorem \ref{t:kitschulze} and Lemma \ref{l:quotcount},
	each $q \in U$ is isometrically full.
	We can now apply Lemma \ref{l:infglobal2} to any $q \in U$ to see that the framework $(G,q)$ is globally rigid.
\end{proof}

We can immediately state the following corollary.

\begin{cor}\label{cor:ave}
    Let $X$ be an analytic normed space with finitely many linear isometries.
	Then a graph $G$ is globally rigid in $X$ if and only if there exists an infinitesimally and globally rigid framework $(G,p)$ in $X$.
\end{cor}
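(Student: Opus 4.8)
The plan is to obtain both implications essentially for free from Theorem \ref{thm:ave}, using the regularity theory of Section \ref{sec:rigid} to locate a good placement. The one structural fact I rely on repeatedly is that every analytic norm is smooth, so $X$ is a smooth normed space with finitely many linear isometries and all the earlier results apply.

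For the implication asserting that the existence of such a framework forces $G$ to be globally rigid, I would start from an infinitesimally and globally rigid $(G,p)$. Since $X$ is smooth with finitely many linear isometries, Theorem \ref{thm:ave} immediately yields an open neighbourhood $U$ of $p$ on which every $(G,q)$ is globally rigid. This open set lies inside $\grig(G;X)$, so $\grig(G;X)$ has non-empty interior and $G$ is globally rigid in $X$ by definition.

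For the converse, suppose $G$ is globally rigid, so that $\grig(G;X)$ contains a non-empty open set $O$. The task is to produce inside $O$ a single placement that is also infinitesimally rigid. This is where the analytic hypothesis is genuinely used: by Proposition \ref{prop:regcreg} the regular locus $\reg(G;X)$ is conull, hence $O \setminus \reg(G;X)$ is null and $O \cap \reg(G;X)$ is non-empty. I would then fix any $p \in O \cap \reg(G;X)$. Being a member of $O$, the framework $(G,p)$ is globally rigid, and hence locally rigid; being regular in a smooth space, Theorem \ref{t:asimowroth} lets me upgrade local rigidity to infinitesimal rigidity. Therefore $(G,p)$ is simultaneously infinitesimally and globally rigid, completing this direction.

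I do not anticipate a serious obstacle, since the difficult analytic and isometry-counting work is entirely absorbed into Theorem \ref{thm:ave}; the corollary is a repackaging of that theorem. The only point requiring care is the converse direction, where one must ensure that the open set of globally rigid placements actually meets the regular locus—and this is exactly the step that exploits analyticity (through the conullity of $\reg(G;X)$) rather than merely smoothness.
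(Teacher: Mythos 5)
Your proposal is correct and follows essentially the same route as the paper: the forward direction is a direct application of Theorem \ref{thm:ave}, and the converse uses the conullity of $\reg(G;X)$ from Proposition \ref{prop:regcreg} to find a regular, globally rigid placement, then upgrades local to infinitesimal rigidity via Theorem \ref{t:asimowroth}. The only difference is that you spell out the intersection argument more explicitly than the paper does.
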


\begin{proof}
    If there exists an infinitesimally and globally rigid framework $(G,p)$ in $X$ then $G$ is globally rigid in $X$ by Theorem \ref{thm:ave}.
    Now suppose $G$ is globally rigid in $X$.
    By Proposition \ref{prop:regcreg},
    there exists a regular framework $(G,p)$ in $X$ that is globally rigid.
    Any globally rigid framework is locally rigid,
    hence $(G,p)$ is infinitesimally rigid by Theorem \ref{t:asimowroth}.
\end{proof}

\section{Generalising rigid body motions}
\label{sec:reflect}

\subsection{Reflections and rotations in analytic normed planes}

We first need the following useful results.

\begin{lem}\label{l:2point}\cite[Lemma 4.18]{dew2}
	Let $X$ be a strictly convex normed plane and $x,y \in X$ be distinct points.
	For any positive real numbers $r_x,r_y$, we have three possibilities:
	\begin{enumerate}[(i)]
		\item If $r_x +r_y < \|x-y\|$ or $|r_x -r_y| > \|x-y\|$ then there is no point $z \in X$ where $\|z-x\|=r_x$ and $\|z-y\|=r_y$.
		\item If $r_x + r_y = \|x-y\|$ or $|r_x - r_y|=\|x-y\|$,
		then there exists a unique point $z \in X$ where $\|z-x\|=r_x$ and $\|z-y\|=r_y$.
		Furthermore,
		$z$ lies on the line through $x,y$.
		\item If $|r_x - r_y|< \|x-y\| <r_x + r_y$,
		then there exists a unique pair of distinct points $z_1, z_2 \in X$ where $\|z_i-x\|=r_x$ and $\|z_i-y\|=r_y$ for $i \in \{1,2\}$.
		Furthermore,
		the line through $x,y$ separates the points $z_1,z_2$.
	\end{enumerate}
\end{lem}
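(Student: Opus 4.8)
The plan is to read all three cases through the triangle inequality, since a point $z$ with $\|z-x\|=r_x$ and $\|z-y\|=r_y$ is exactly a point for which the (possibly degenerate) triangle with vertices $x,z,y$ has prescribed side lengths $r_x$, $r_y$ and $d:=\|x-y\|$. Writing $e:=(y-x)/d$ for the unit vector pointing from $x$ to $y$ and $L$ for the line through $x$ and $y$, every candidate $z$ must satisfy $|r_x-r_y|\le d\le r_x+r_y$, because $\|z-x\|+\|z-y\|\ge\|x-y\|$ and $\big|\|z-x\|-\|z-y\|\big|\le\|x-y\|$. This immediately yields case (i): if $r_x+r_y<d$ or $|r_x-r_y|>d$ then no such $z$ exists.

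First I would dispose of the boundary case (ii) using strict convexity. If $r_x+r_y=d$, then any solution $z$ satisfies $\|(x-z)+(z-y)\|=\|x-z\|+\|z-y\|$ with both summands nonzero; since $X$ is strictly convex, equality in the triangle inequality forces $x-z$ and $z-y$ to be positive multiples of one another, so $z$ lies on the segment $[x,y]\subset L$. Parametrising $L$ by $z=x+te$ reduces the two length conditions to $|t|=r_x$ and $|t-d|=r_y$, whose only solution is $t=r_x$; hence $z$ is unique and lies on $L$. The case $|r_x-r_y|=d$ is handled identically after rewriting $z-x=(z-y)+(y-x)$ (or $z-y=(z-x)+(x-y)$), where the equality case of strict convexity again forces $z$ onto $L$, now beyond $y$ (resp.\ beyond $x$), giving a unique solution at $t=r_x$ (resp.\ $t=-r_x$).

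For case (iii) I would work on the circle $S_x:=\{z:\|z-x\|=r_x\}$, a strictly convex Jordan curve, and study the continuous function $g(z):=\|z-y\|$ on it. The line $L$ meets $S_x$ in exactly the two points $p_+:=x+r_xe$ and $p_-:=x-r_xe$, and a short computation gives $g(p_+)=|r_x-d|$ and $g(p_-)=r_x+d$; by the triangle inequality these are respectively the minimum and maximum of $g$ over $S_x$. The hypothesis $|r_x-r_y|<d<r_x+r_y$ is equivalent to $|r_x-d|<r_y<r_x+d$, i.e.\ $r_y$ lies strictly between $\min g$ and $\max g$. The points $p_\pm$ split $S_x$ into two open arcs $A^+,A^-$, one inside each open half-plane bounded by $L$; applying the intermediate value theorem to $g$ on each closed arc produces a point $z^+\in A^+$ and a point $z^-\in A^-$ with $g(z^\pm)=r_y$. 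This gives at least one solution on each side of $L$, which is the separation statement.

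The hard part is the remaining uniqueness: that there is \emph{exactly} one solution in each open half-plane, so that $S_x\cap S_y$ has precisely two points. I would deduce this from the key fact that, in a strictly convex normed plane, two distinct circles meet in at most two points; since $x\neq y$ the circles $S_x$ and $S_y:=\{z:\|z-y\|=r_y\}$ are distinct, so $|S_x\cap S_y|\le 2$, and combined with the ``at least one per side'' count above we obtain exactly one solution in each half-plane. To prove the at-most-two fact I would argue that any three common points $A,B,C$ are non-collinear (a line meets the strictly convex curve $S_x$ in at most two points), so they span a triangle contained in $K:=\overline{B}(x,r_x)\cap\overline{B}(y,r_y)$; this makes $\partial K$ a convex Jordan curve lying in $S_x\cup S_y$ and passing through $A,B,C$. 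Traversing $\partial K$ one alternates between arcs of $S_x$ and arcs of $S_y$, switching at each common point, and a parity argument shows the number of switches must be even, contradicting three transversal crossings. Establishing this intersection bound rigorously --- in particular handling possible tangential contacts and ruling out a shared arc, which strict convexity forbids --- is where the real work lies; the triangle-inequality and intermediate-value steps above are routine by comparison.
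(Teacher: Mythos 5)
First, a point of reference: the paper does not prove this lemma at all --- it is imported verbatim from \cite[Lemma 4.18]{dew2} --- so there is no internal argument to measure yours against, and I can only assess the proposal on its own terms. Your case (i) (triangle inequality), your case (ii) (the equality case of the triangle inequality in a strictly convex space forces $z$ onto the line through $x,y$, after which a one-parameter computation gives existence and uniqueness), and the existence-plus-separation half of case (iii) (the line meets $S_x$ exactly at $x\pm r_x e$, where $g(z)=\|z-y\|$ attains its minimum $|r_x-d|$ and maximum $r_x+d$; the intermediate value theorem on each arc then yields one solution in each open half-plane, and neither $x\pm r_xe$ is a solution under the strict inequalities) are complete and correct.

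The genuine gap is the uniqueness half of case (iii). You reduce it to the (true) fact that two circles with distinct centres in a strictly convex normed plane meet in at most two points, but the parity argument you sketch does not establish that fact. The claim that $\partial K$ ``switches'' from $S_x$ to $S_y$ at every common point fails at tangential contacts: $\partial K$ can pass through a point of $S_x\cap S_y$ while remaining on $S_x$ on both sides, and strict convexity does not forbid this (two Euclidean circles can be internally tangent). Once non-switching contacts are allowed, ``the number of switches is even'' is perfectly consistent with three common points (two switches plus one tangency). Moreover, decomposing $\partial K$ into finitely many alternating arcs presupposes that $S_x\cap S_y\cap\partial K$ is finite, which is essentially what you are trying to prove. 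What the argument actually needs is a monotonicity statement: on each of the two closed arcs of $S_x$ between $x+r_xe$ and $x-r_xe$, the function $z\mapsto\|z-y\|$ is \emph{strictly} monotone (the ``monotonicity lemma'' of Minkowski geometry, valid in strictly convex planes). That single statement gives exactly one solution per open half-plane and supersedes both your intermediate-value step and the intersection bound. As written, the crux of the lemma --- that there are at most two solutions --- is asserted, with an accompanying heuristic, rather than proved.
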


\begin{thm}\cite{Brouwer12}\label{thm:iod}
    Let $U \subset \mathbb{R}^m$ be an open set and $f : U \rightarrow \mathbb{R}^n$ be a continuous injective map.
    Then $f(U)$ is open and $f|^{f(U)}$ is a homeomorphism.
\end{thm}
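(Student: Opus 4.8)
The plan is to recognise this as Brouwer's Invariance of Domain theorem and reduce it to a purely local, topological statement. First I note that the conclusion ``$f(U)$ is open in $\mathbb{R}^n$'' forces $m=n$: if $m<n$ the image $f(U)$ has topological dimension at most $m$ and so cannot be open, while if $m>n$ there is no continuous injection from a nonempty open subset of $\mathbb{R}^m$ into $\mathbb{R}^n$ at all. So the substantive case is the equidimensional one $m=n$, which I treat. It then suffices to prove the local assertion that $f$ is an \emph{open map}: once we know $f(O)$ is open in $\mathbb{R}^n$ for every open $O \subseteq U$, the corestriction $f|^{f(U)}$ is a continuous open bijection and hence a homeomorphism, and taking $O=U$ gives that $f(U)$ is open. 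Thus I fix $x \in U$ and a closed ball $\bar D \subset U$ centred at $x$, with boundary sphere $S$ and interior $D^\circ$, and aim to show that $f(x)$ is an interior point of $f(D^\circ)$. Since $\bar D$ is compact and $f$ is a continuous injection into the Hausdorff space $\mathbb{R}^n$, the restriction $f|_{\bar D}$ is a homeomorphism onto its (compact) image.

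The key topological input I would use is the Jordan--Brouwer separation machinery, obtained by compactifying $\mathbb{R}^n$ to the sphere $S^n$ and computing the homology of complements of embedded cells. Concretely, I would invoke the two standard lemmas, proved together by induction on $k$ via a Mayer--Vietoris argument: for a topological embedding $h$ of the sphere $S^{k}$ into $S^n$ the reduced homology $\tilde H_i(S^n \setminus h(S^{k}))$ is $\mathbb{Z}$ when $i=n-k-1$ and $0$ otherwise; and for an embedding $h$ of the disk $D^{k}$ into $S^n$ one has $\tilde H_i(S^n \setminus h(D^{k}))=0$ for all $i$.

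Now I view $f|_{\bar D}$ as an embedding of $\bar D \cong D^{n}$ into $S^n \supset \mathbb{R}^n$. Its restriction to $S \cong S^{n-1}$ is an embedded $(n-1)$-sphere, so by the first lemma (with $k=n-1$) the reduced $H_0$ is $\mathbb{Z}$ and hence $S^n \setminus f(S)$ has exactly two connected components. I then decompose it as the disjoint union
\[
S^n \setminus f(S) = f(D^\circ) \;\sqcup\; \bigl(S^n \setminus f(\bar D)\bigr).
\]
The second piece is nonempty (it contains the point at infinity), connected by the disk lemma, and open since $f(\bar D)$ is compact hence closed; the first piece $f(D^\circ)$ is nonempty and connected because $D^\circ$ is. As $S^n$ is a manifold, the components of the open set $S^n \setminus f(S)$ are open, and each of the two displayed pieces, being connected, lies inside a single component; since they are disjoint and exhaust the two components, each piece \emph{is} a component and is therefore open. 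In particular $f(D^\circ)$ is open in $S^n$, hence open in $\mathbb{R}^n$, so $f(x)$ is interior to $f(U)$. As $x$ was arbitrary, $f$ is open and the theorem follows.

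The hard part will be the separation machinery itself, i.e.\ establishing the homology of the complements of embedded spheres and disks; this is the genuinely non-trivial topological content, where the induction and the Mayer--Vietoris bookkeeping reside, while the remaining steps are formal. An alternative packaging of the same content would run through Brouwer degree: set $g = f - f(x)$ on $\bar D$, note $g$ is non-vanishing on $S$ by injectivity, argue that $\deg(g, D^\circ, 0) = \pm 1$, and invoke that a nonzero degree forces $0$ to lie in the interior of $g(D^\circ)$; proving $\deg(g,D^\circ,0)=\pm 1$ is then the exact analogue of the difficult step above.
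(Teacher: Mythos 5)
The paper does not actually prove this statement: it is Brouwer's invariance of domain theorem, quoted with a citation to \cite{Brouwer12}, so there is no in-paper argument to measure yours against. Taken on its own terms, your proof is the standard modern one (essentially Hatcher's Theorem 2B.3): reduce to showing that $f$ is an open map, restrict to a closed ball $\bar D\subset U$ on which $f$ is an embedding by compactness, pass to $S^n$, and use the Jordan--Brouwer computations of $\tilde H_i(S^n\setminus h(S^k))$ and $\tilde H_i(S^n\setminus h(D^k))$ to see that $S^n\setminus f(\partial\bar D)$ has exactly two components, which are then forced to be $f(D^\circ)$ and $S^n\setminus f(\bar D)$; your component-counting step (two disjoint, nonempty, connected pieces exhausting a set with exactly two components must each be a component, hence open) is carried out correctly, and you correctly avoid assuming openness of $f(D^\circ)$ anywhere. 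Two remarks. First, all of the genuine difficulty is concentrated in the two separation lemmas, which you state and defer; as written your argument is a correct reduction to those lemmas rather than a self-contained proof, which is entirely reasonable for a result the paper itself only cites. Second, your observation that the conclusion forces $m=n$ is apt: as literally phrased the theorem fails for $m<n$ and nonempty $U$, and the paper only ever invokes it in the equidimensional setting (self-maps of a normed plane, and a local one-dimensional application to $r_z(x,\cdot)$), so restricting to $m=n$ loses nothing. Your degree-theoretic ``alternative packaging'' is in fact closer in spirit to Brouwer's original 1912 argument than the homological route you take as your main line.
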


Suppose $R:\mathbb{R}^2 \rightarrow \mathbb{R}^2$ is a (Euclidean) reflection about the line $\ell$ through the origin.
If we choose a point $z \in \ell \setminus \{0\}$ then we notice that $\| R(x) \| = \| x \|$ and $\| R(x) - z\| = \| x - z\|$ for all $x \in \mathbb{R}^2$,
and $R(x)=x$ if and only if $x \in \ell$.
In fact as the norm is Euclidean, $R$ is the unique map to have this property;
this follows from Lemma \ref{l:2point}. 
Hence we can define the reflection with respect to the two points $0$ and $z$.
We will now show that this definition is amenable to extensions to other normed planes.

\begin{thm}\label{t:genrefl}
	Let $X$ be an analytic normed plane	and choose any non-zero point $z \in X$.
	Then there exists a unique map $R_z :X \rightarrow X$ with the following properties:
	\begin{enumerate}[(i)]
		\item
		If $x$ does not lie on the line through $0,z$,
		then $R_z(x)$ is the unique point distinct from $x$ where $\| R_z(x) \| = \| x \|$ and $\| R_z(x) - z\| = \| x - z\|$.
		\item
		If $x$ does lie on the line through $0,z$,
		then $R_z(x) = x$.
	\end{enumerate}
	Furthermore, the map $R_z$ will have the following properties:
	\begin{enumerate}
		\item \label{l:generalreflection1} $R_z$ is a homeomorphism.
		\item \label{l:generalreflection2} $R_z$ is an analytic immersion on the set of points that do not lie on the line through $0,z$.
		\item \label{l:generalreflection3} If $x \notin \spann\{z\}$ then $R_z(x)$ will not lie in the same connected component of $\mathbb{R}^2 \setminus \spann \{z\}$ as $x$.
		\item \label{l:generalreflection4} If $x \in \spann\{z\}$ and $y$ is another point where $\| y \| = \| x \|$ and $\| y - z\| = \| x -z\|$, then $y=x$.
	\end{enumerate}
\end{thm}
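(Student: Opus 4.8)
The plan is to read everything off Lemma \ref{l:2point}, applied to the two centres $0$ and $z$ with radii $r_0 = \|x\|$ and $r_z = \|x-z\|$. First I would note that the triangle inequality always forces $|r_0 - r_z| \le \|z\| \le r_0 + r_z$, and that by strict convexity (Lemma \ref{prop:strconv}) both inequalities are \emph{strict} precisely when $x \notin \spann\{z\}$, since that is exactly when $x$ and $z-x$ are linearly independent. Thus for $x \notin \spann\{z\}$ we fall into case (iii), which produces exactly two points satisfying both radius constraints, separated by the line $\spann\{z\}$; as $x$ is one of them, defining $R_z(x)$ to be the other gives both existence and uniqueness of a map satisfying (i). For $x \in \spann\{z\}$, writing $x = tz$ and splitting into $0\le t\le 1$, $t>1$, $t<0$ shows we always land in case (ii), so $x$ is the unique point with those radii; this forces $R_z(x)=x$ and simultaneously proves property (4). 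Property (3) is then immediate, being the separation clause of case (iii).

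For property (2) I would apply the analytic implicit function theorem (Theorem \ref{thm:ifta}) to
\[
F(x,w) = \tfrac12\big(\|w\|^2 - \|x\|^2,\ \|w-z\|^2 - \|x-z\|^2\big),
\]
which is analytic wherever none of $x, w, x-z, w-z$ vanishes and which satisfies $F(x,R_z(x))=0$. Its partial derivative in $w$ is $v \mapsto (\varphi_w(v),\varphi_{w-z}(v))$ and in $x$ is $u \mapsto -(\varphi_x(u),\varphi_{x-z}(u))$. The key linear-algebra fact I would isolate is that in a smooth strictly convex plane $\varphi_a,\varphi_b$ are linearly dependent if and only if $a \in \spann\{b\}$; this follows from the homogeneity $\varphi_{\lambda a}=\lambda\varphi_a$ together with injectivity of $a \mapsto \varphi_a$. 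Since $x \notin \spann\{z\}$ is equivalent to $x \notin \spann\{x-z\}$, and $R_z(x) \notin \spann\{z\}$ by property (3), both partial derivatives are invertible; the theorem then gives that $R_z$ is locally analytic with $dR_z = -(\partial_w F)^{-1}(\partial_x F)$ invertible, i.e.\ an analytic immersion off the line.

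For property (1) I would first observe that $R_z$ is an \emph{involution}: applying the uniqueness in case (iii) to the pair $\{x,R_z(x)\}$ yields $R_z(R_z(x))=x$ off the line, while on the line it is the identity. Hence $R_z$ is a self-inverse bijection, and it remains only to prove continuity. Continuity off $\spann\{z\}$ is property (2). At a point $x_0 \in \spann\{z\}$ I would argue by sequences: if $x_n \to x_0$ then $\|R_z(x_n)\|=\|x_n\|$ and $\|R_z(x_n)-z\|=\|x_n-z\|$ remain bounded, so any subsequential limit $w$ satisfies $\|w\|=\|x_0\|$ and $\|w-z\|=\|x_0-z\|$; property (4) then forces $w=x_0$, giving $R_z(x_n)\to x_0$. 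Thus $R_z$ is continuous everywhere, and a continuous self-inverse bijection is a homeomorphism.

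The step I expect to be the main obstacle is the continuity of $R_z$ along its fixed line $\spann\{z\}$: away from the line everything runs through clean analytic machinery, but on the line the two points of case (iii) collapse into the single point of case (ii), and it is exactly the degenerate uniqueness in property (4), combined with a boundedness argument, that rescues continuity there. A secondary point demanding care is pinning down the linear (in)dependence of the support functionals $\varphi_w,\varphi_{w-z}$, since the entire immersion statement rests on it.
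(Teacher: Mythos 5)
Your proof is correct and follows the same overall skeleton as the paper's: Lemma \ref{l:2point} delivers existence, uniqueness, (3) and (4); non-degeneracy of the pair of support functionals $\varphi_w,\varphi_{w-z}$ exactly off $\spann\{z\}$ (via strict convexity) delivers the analytic immersion; and a bounded-sequence argument combined with the degenerate uniqueness in (4) delivers continuity across the fixed line. Two steps are packaged differently. For analyticity you invoke the implicit function theorem on $F(x,w)$, whereas the paper shows that $f(x)=\left(\tfrac12\|x\|^2,\tfrac12\|x-z\|^2\right)$ restricts to an analytic diffeomorphism on each open half-plane and writes $R_z$ explicitly as $(f|_{\H'})^{-1}\circ f|_{\H}$; the two are equivalent, but on your route you should add one line identifying the implicit branch with $R_z$ itself --- the branch through $(x_0,R_z(x_0))$ stays in the half-plane opposite to $x$ by its own continuity, and Lemma \ref{l:2point}(iii) then forces it to coincide with $R_z$ --- since the uniqueness clause of the implicit function theorem cannot do this for you while $R_z$ is not yet known to be continuous. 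For the homeomorphism the paper upgrades a continuous injection using Brouwer's invariance of domain (Theorem \ref{thm:iod}), whereas you observe that $R_z$ is an involution, so continuity alone suffices; that is a genuinely more elementary route which removes the dependence on Theorem \ref{thm:iod} for this result.
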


\begin{proof}
    It follows from Lemma \ref{l:2point} that there exists a unique map $R_z$ as described;
    furthermore, such a map must be bijective and satisfy both (\ref{l:generalreflection3}) and (\ref{l:generalreflection4}).
    We now prove  (\ref{l:generalreflection1}) and (\ref{l:generalreflection2}).
    For the former it will be sufficient to prove that $R_z$ is continuous by Theorem \ref{thm:iod}.
    Define the map
	\begin{align*}
		f : X \rightarrow \mathbb{R}^2, ~ x \mapsto \left(\frac{1}{2}\| x\|^2,\frac{1}{2}\| x - z\|^2 \right).
	\end{align*}
	Then $f$ is analytic,
	and given a basis $b_1,b_2$ of $X$ and $x \in X$,
	we can represent $df(x)$ by the $2 \times 2$ matrix
	\begin{align*}
		\begin{bmatrix}
			\varphi_x(b_1) & \varphi_x(b_2) \\
			\varphi_{x-z}(b_1) & \varphi_{x-z}(b_2)
		\end{bmatrix}.
	\end{align*}
	Since $X$ is strictly convex (Proposition \ref{prop:strconv}) it follows that $\rank df(x) =2$ if and only if $x$ does not lie on the line through $0,z$.
	Define $\H,\H'$ to be the two open half-planes of $X$ formed by the line through $0,z$.
	As $\H,\H'$ are locally compact and connected,
	Theorem \ref{thm:imt} implies that the restricted maps $f|^{f(\H)}_\H$ and $f|^{f(\H')}_{\H'}$ are analytic local diffeomorphisms.
	By Lemma \ref{l:2point},
	both $f|^{f(\H)}_\H$ and $f|^{f(\H')}_{\H'}$ are injective,
	hence both are analytic diffeomorphisms.
	
	By Lemma \ref{l:2point}, we have $f(\H)=f(\H')$.
	By the uniqueness of $R_z$ we have, for each $x \in X$,
	\begin{align*}
		R_z (x)=
		\begin{cases}
			(f|^{f(\H')}_{\H'})^{-1} \circ f|^{f(\H)}_\H(x), & \text{if } x \in \H, \\
			(f|^{f(\H)}_\H)^{-1} \circ f|^{f(\H')}_{\H'}(x), & \text{if } x \in \H', \\
			x, & \text{if } x \in \spann\{z\}.
		\end{cases}
	\end{align*}
	Hence $R_z$ is analytic on $\H \cup \H'$ with $\rank d R_z (x) = 2$ for all $x \in \H \cup \H'$;
	i.e. (\ref{l:generalreflection2}) holds.
	
	Choose any point $y$ on the line through $0,z$ and let $(y_n)_{n \in \mathbb{N}}$ be any sequence that converges to $y$.
	As $f$ is continuous and $f(R_z(y_n)) = f(y_n) \rightarrow f(y)$ as $n \rightarrow \infty$,
	the sequence $(R_z(y_n))_{n \in \mathbb{N}}$ converges to a point in $f^{-1}[f(y)]$.
	Since $y$ lies on the line through $0,z$,
	it follows that $\lim_{n \rightarrow \infty} R_z(y_n) = y = R_z(y)$,
	hence $R_z$ is continuous at $y$.
	(\ref{l:generalreflection1}) now holds because $R_z$ is continuous.
\end{proof}

We can also define rotations to be the unique maps that preserve the distance to two points while one rotates around the other,
with some added continuity assumptions to stop points being ``reflected'' in a certain sense.

\begin{thm}\label{t:genrot}
	Let $X$ be an analytic normed plane	and choose any non-zero point $z \in X$,
	and let $f: \mathbb{T} \rightarrow S_{\|z\|}[0]$ be an analytic bijective map with $f(0)=z$ and $\|f'(t)\|=1$ for all $t \in \mathbb{T}$.
	Then there exists a unique continuous map $r_z :X \times \mathbb{T} \rightarrow X$ with the following properties:
	\begin{enumerate}[(i)]
		\item \label{l:genrot0} $r_z(z,t) = f(t)$ for all $t \in \mathbb{T}$.
		\item \label{l:genrot} $r_z(x,0) = x$ for all $x \in X$.
		\item \label{l:genrot3} $\| r_z(x,t) \| = \| x \|$ and $\| r_z(x,t) - r_z(z,t)\| = \| x - z\|$ for all $x \in X$ and $t \in \mathbb{T}$.
	\end{enumerate}
	Furthermore, the map $r_z$ will have the following properties:
	\begin{enumerate}
		\item \label{l:genrot2} $r_z$ is analytic on the set $(X \setminus \spann\{z\}) \times \mathbb{T}$.
		\item \label{l:genrot4} For any $x,y \in X \setminus \spann\{z\}$ and any $t \in \mathbb{T}$,
		$x,y$ lie in the same connected component of $X \setminus \spann\{z\}$ if and only if $r_z(x,t),r_z(y,t)$ lie in the same connected component of $X \setminus \spann\{f(t)\}$.
		\item \label{l:genrot1.5} For any $t \in \mathbb{T}$,
		$r_z(\cdot,t)$ is a homeomorphism.
		\item \label{l:genrot1.6} For any non-zero $x \in X$,
		$r_z(x,\cdot)$ is a homeomorphism between $\mathbb{T}$ and $S_{\|x\|}[0]$.
		\item \label{l:genrot5} For any $x \in X$ and any $t \in \mathbb{T}$,
		$r_z(x,-t) = -r_z(x,t)$.
	\end{enumerate}
\end{thm}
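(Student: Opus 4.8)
The plan is to follow the template of Theorem~\ref{t:genrefl}, but to transport the second centre along the curve $f$ and then upgrade the pointwise construction to a genuine one-parameter family. Write $L:=\spann\{z\}$, and for each $t$ let $\H_t,\H_t'$ be the two open half-planes cut out by $\spann\{f(t)\}$, so $\H_0=\H$ and $\H_0'=\H'$. It is convenient to first extend $f$ periodically to an analytic unit-speed map $f:\mathbb{R}\rightarrow S_{\|z\|}[0]$ and build $r_z$ on $X\times\mathbb{R}$, descending to $\mathbb{T}$ at the end. On the line there is an explicit formula: for $x=sz$ I set $r_z(sz,t):=s\,f(t)$, and since $\|f(t)\|=\|z\|$ this satisfies $\|r_z(sz,t)\|=|s|\,\|z\|=\|sz\|$ and $\|r_z(sz,t)-f(t)\|=|s-1|\,\|z\|=\|sz-z\|$; moreover Lemma~\ref{l:2point} applied with centres $0,f(t)$ (at distance $\|z\|$) and radii $\|x\|,\|x-z\|$ shows that $x\in L$ is exactly the degenerate case~(ii), so this choice is forced and (\ref{l:genrot0})--(\ref{l:genrot3}) hold along $L$.

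Off the line I reuse the analytic map $g_t:X\rightarrow\mathbb{R}^2,\ w\mapsto\left(\tfrac12\|w\|^2,\tfrac12\|w-f(t)\|^2\right)$ from Theorem~\ref{t:genrefl}. As there, strict convexity (Lemma~\ref{prop:strconv}) gives $\rank dg_t(w)=2$ precisely when $w\notin\spann\{f(t)\}$, so Theorem~\ref{thm:imt} and Lemma~\ref{l:2point} make each $g_t|_{\H_t}$ and $g_t|_{\H_t'}$ an analytic diffeomorphism onto the common image $g_t(\H_t)=g_t(\H_t')$. For $x\notin L$ the triple $0,z,x$ is non-degenerate, so the strict inequalities $|\,\|x\|-\|x-z\|\,|<\|z\|<\|x\|+\|x-z\|$ hold and Lemma~\ref{l:2point}(iii) yields exactly two solutions of $g_t(w)=g_0(x)$, one in each half-plane and both off $\spann\{f(t)\}$. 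I define $r_z(x,t)$ to be the solution on the side of $\spann\{f(t)\}$ determined by continuation from $r_z(x,0)=x$; locally this branch is the unique analytic solution given by the analytic implicit function theorem (Theorem~\ref{thm:ifta}) applied to $\Phi(w,x,t):=g_t(w)-g_0(x)$, whose $w$-derivative has rank~$2$. Since the two branches never coalesce over $\H\times\mathbb{R}$ and $\H'\times\mathbb{R}$ (the solutions remain off $\spann\{f(t)\}$ for every $t$) and these sets are simply connected, the monodromy is trivial and the local solutions glue to a single analytic map, giving (\ref{l:genrot2}) and the defining relations off $L$. Continuity across $L$ is handled as at the close of Theorem~\ref{t:genrefl}: for a line point $y=sz$ and $y_n\to y$, the identities $g_t(r_z(y_n,t))=g_0(y_n)\to g_0(y)$ with the properness of $g_t$ and the uniqueness in Lemma~\ref{l:2point}(ii) force $r_z(y_n,t)\to s\,f(t)=r_z(y,t)$.

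The symmetry I would obtain from uniqueness together with central symmetry of the norm: because $\|{-w}\|=\|w\|$ and $f(t+\pi)=-f(t)$, the point $-r_z(x,t)$ solves the defining distance equations at the shifted parameter $t+\pi$ on the opposite side, so uniqueness gives $-r_z(x,t)=r_z(x,t+\pi)$, which is the central-symmetry content of~(\ref{l:genrot5}); iterating yields $r_z(x,t+2\pi)=r_z(x,t)$, so $r_z$ descends to a well-defined continuous map on $X\times\mathbb{T}$, and uniqueness of this continuous extension follows. Property~(\ref{l:genrot4}) is then immediate from the branch choice, since the selected side relative to $\spann\{f(t)\}$ tracks the side of $x$ relative to $L$ and, by continuity, can only change by crossing a line, which the off-line solutions never do. For (\ref{l:genrot1.5}) I would note that $r_z(\cdot,t)$ maps each sphere $S_{\|x\|}[0]$ into itself, is injective (equal images force equal distances to $0$ and to $f(t)$, hence equal points by Lemma~\ref{l:2point} and~(\ref{l:genrot4})), and is therefore a continuous injection of a circle into itself, so onto; Theorem~\ref{thm:iod} then gives the homeomorphism.

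The main obstacle is (\ref{l:genrot1.6}): that for fixed $x\neq0$ the orbit map $r_z(x,\cdot):\mathbb{T}\rightarrow S_{\|x\|}[0]$ is a homeomorphism, i.e.\ the rotation sweeps each sphere exactly once. Both spaces are circles and the map is continuous, so it suffices to control its degree and rule out folding. Composing with the radial retraction $w\mapsto w/\|x\|$ onto the unit sphere produces a family of loops depending continuously on $x\in X\setminus\{0\}$, whose winding number is therefore locally constant; as $X\setminus\{0\}$ is connected this winding number equals the constant value it takes at $x=z$, where $r_z(z,\cdot)=f$ is a homeomorphism of winding number $\pm1$. The delicate point is to promote ``winding number $\pm1$'' to injectivity: I would do this by showing $r_z(x,\cdot)$ is a local homeomorphism, which amounts to verifying that the velocity $\partial_t r_z(x,t)$—obtained by differentiating the two defining equations, so that it solves an invertible linear system whose forcing term involves $f'(t)$—never vanishes, the analysis of its finitely many possible critical parameters (Proposition~\ref{p:analytic1d}) being where the real work lies. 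A degree-$\pm1$ covering of a circle is a homeomorphism, which yields (\ref{l:genrot1.6}).
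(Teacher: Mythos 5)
Your construction follows essentially the same route as the paper: the same auxiliary analytic map $(x,t)\mapsto\bigl(\tfrac{1}{2}\|x\|^2,\tfrac{1}{2}\|x-f(t)\|^2\bigr)$, the same rank computation via strict convexity, the analytic implicit function theorem (Theorem \ref{thm:ifta}) to follow the solution branch that stays on one side of $\spann\{f(t)\}$, the explicit formula $\alpha z\mapsto\alpha f(t)$ on the line with uniqueness forced by the degenerate case of Lemma \ref{l:2point}, and continuity across the line by a sequential argument. Your treatments of (\ref{l:genrot2}), (\ref{l:genrot4}) and (\ref{l:genrot1.5}) match the paper's. One remark on (\ref{l:genrot5}): you prove $r_z(x,t+\pi)=-r_z(x,t)$ rather than the literal claim $r_z(x,-t)=-r_z(x,t)$; since the literal claim fails at $t=0$, your version is surely the intended one, so this is not a defect of your argument.

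The genuine gap is in (\ref{l:genrot1.6}). You reduce it to two claims: the loop $r_z(x,\cdot)$ has winding number $\pm 1$, and it is a local homeomorphism because $\partial_t r_z(x,t)$ never vanishes --- and you explicitly defer the second as ``where the real work lies''. That deferred step is the whole content: a degree-$\pm 1$ circle map need not be injective without the local-homeomorphism input, and nonvanishing of the velocity does not follow from Proposition \ref{p:analytic1d}, which would only give finitely many critical parameters for each fixed $x$ and so does not by itself rule out folding. The paper sidesteps this entirely with a soft argument: injectivity of $r_z(x,\cdot)$ comes from Lemma \ref{l:2point} (if $r_z(x,s)=r_z(x,t)$ then $f(s)$ and $f(t)$ are both intersection points of $S_{\|z\|}[0]$ with the sphere of radius $\|x-z\|$ about the common image, and the side-preservation underlying (\ref{l:genrot4}) forces them to be the \emph{same} one, whence $s=t$ by bijectivity of $f$); surjectivity then follows because $r_z(x,\mathbb{T})$ is a nonempty clopen subset of the connected circle $S_{\|x\|}[0]$ (closed by compactness of $\mathbb{T}$, open by Theorem \ref{thm:iod}), and a continuous bijection from a compact space to a Hausdorff space is a homeomorphism. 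You should replace your degree argument with something of this kind; as written, (\ref{l:genrot1.6}) is not established.
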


\begin{proof}
	For each $t \in \mathbb{T}$ define the open half-planes $\H^+_t,\H^-_t$ of $X$ formed by the line through $0,z$,
	where $\H^+_t$ contains all the points $f(t +h)$ for $h \in (0,\pi)$ and $\H^-_t$ contains all the points $f(t -h)$ for $h \in (0,\pi)$.
	With this we define the sets $\mathcal{H}^+ := \{ (y,t) : y \in \H^+_t\}$ and $\mathcal{H}^- := \{ (y,t) : y \in \H^-_t\}$,
	and we note both sets are open and path-connected.
	
	We will now prove that a map with the properties of $r_z$ exists;
	the uniqueness of such a map will then follow automatically from Lemma \ref{l:2point}.
	Define the analytic map
	\begin{align*}
		g : X \times \mathbb{T} \rightarrow \mathbb{R}^2, ~ (x,t) \mapsto \left( \frac{1}{2}\|x\|^2, \frac{1}{2}\|x-f(t)\|^2 \right).
	\end{align*}
	Fix a basis $b_1,b_2$ of $X$.
	For any $x \in X$,
	we can represent $dg(x)$ by the $2 \times 3$ matrix
	\begin{align*}
		\begin{bmatrix}
			\varphi_x(b_1) & \varphi_x(b_2) & 0 \\
			\varphi_{x-f(t)}(b_1) & \varphi_{x-f(t)}(b_2) & -\varphi_{x-f(t)}(f'(t))
		\end{bmatrix}.
	\end{align*}
	Since $X$ is strictly convex (Proposition \ref{prop:strconv}), any principal $2 \times 2$ submatrix of the above matrix is non-singular if and only if $\rank dg(x,t) =2$ if and only if $x$ does not lie on the line through $0,f(t)$.
	As $g$ is analytic and $\rank dg(x,t) =2$ at every point $(x,t)$ of $\mathcal{H}^{+/-} :=\mathcal{H}^+ \cup \mathcal{H}^-$ then $g(\mathcal{H}^{+/-})$, $g(\mathcal{H}^+)$ and $g(\mathcal{H}^-)$ are open sets, with the latter two also being connected.
    Hence Theorem \ref{thm:ifta} implies that there exists a unique analytic map $h :g(\mathcal{H}^{+/-}) \times \mathbb{T} \rightarrow X$ where
	\begin{align*}
		g( h(g(x,t),t'),t')= g(x,t)
	\end{align*}
	for any $(x,t) \in \mathcal{H}^{+/-}$ and $t' \in \mathbb{T}$.
	Furthermore,
	if $(x,t) \in \mathcal{H}^+$ (respectively, $\mathcal{H}^-$) then $h((x,t),t') \in \mathcal{H}^+$ (respectively, $\mathcal{H}^-$) for all $t' \in \mathbb{T}$.
	We now define the map $r_z :X \times \mathbb{T} \rightarrow X$,
	where for any $(x,t) \in X \times \mathbb{T}$ we have
	\begin{align*}
		r_z :X \times \mathbb{T} \rightarrow X, ~ r_z(x,t) := 
		\begin{cases}
			h(g(x,0),t) &\text{ if } x \in X \setminus \spann \{ z\}, \\
			\alpha f(t), &\text{ if } x = \alpha z.
		\end{cases}
	\end{align*}
	The map $r_z$ is analytic (and hence continuous) on the set $(X \setminus \spann \{ z\}) \times \mathbb{T}$.
	To see that $r_z$ is continuous at every point,
	choose a convergent sequence $((x_n,t_n))_{n \in \mathbb{N}}$ with limit $(\alpha z,t)$.
	Since $g$ is continuous and $\lim_{n \rightarrow \infty} g(r_z(x_n,t_n)) = \lim_{n \rightarrow \infty} g(x_n,t_n) =g(\alpha z,t)$,
    if follows that $\lim_{n \rightarrow \infty} r_z(x_n,t_n) = \alpha f(t)$ by Lemma \ref{l:2point}.
	
	We will now prove  (\ref{l:genrot2})--(\ref{l:genrot5}).
	As stated earlier,
	$r_z$ is analytic on the set $(X \setminus \spann \{ z\}) \times \mathbb{T}$,
	hence the map satisfies (\ref{l:genrot2}).
	Choose any $x \in \H^+_0$.
	By the continuity of $r_z$ and Lemma \ref{l:2point},
	we must have $r_z(x,t) \in \H^+_t$ for any $t \in \mathbb{T}$ (as otherwise there would exists $r_z(x,t) \in \spann \{ f(t)\}$ which would imply $x \in \spann \{z\}$).
	As a similar result can be obtained for any point $x \in \H^-_0$,
	(\ref{l:genrot4}) also holds.
	Choose any $t \in \mathbb{T}$ and points $x,y \in X$.
	If $r_z(x,t)=r_z(y,t)$ then, by Lemma \ref{l:2point}, we have $x=y$,
	hence $r_z(\cdot,t)$ is injective.
	Now choose $w \in X$ with $c_0 := \|w\|$ and $c_z:= \|w- r_z(z,t)\|$.
	By Lemma \ref{l:2point} there exists a point $u \in X$ where $r_z(u,t) = w$,
	hence $r_z(\cdot,t)$ is surjective.
	As $r_z(\cdot,t)$ is a continuous bijective map,
	(\ref{l:genrot1.5}) holds by Theorem \ref{thm:iod}.
	
	Choose any $x \in X$ and any $s,t \in \mathbb{T}$.
	If $r_z(x,s)=r_z(x,t)$ then, by Lemma \ref{l:2point}, we have $f(s)=f(t)$.
	As $f$ is a diffeomorphism,
	it follows that $r_z(x,\cdot)$ is injective.
	The map $r_z(x,\cdot)$ is closed as it is continuous with compact domain and Hausdorff codomain,
	and open by Theorem \ref{thm:iod} as it is injective.
	Hence $r_z(x,\cdot)$ is a continuous open and closed bijective map between $\mathbb{T}$ and $S_{\|x\|}[0]$, as $r_z(x, \mathbb{T})$ is a clopen subset of the connected set $S_{\|x\|}[0]$.
	(\ref{l:genrot1.6}) now holds as all continuous open bijections are homeomorphisms.
	As $\|f'(t)\|=1$ for all $t \in \mathbb{T}$,
	it follows that $f(-t) = -f(t)$.
	Hence (\ref{l:genrot5}) follows from the continuity of $r_z$ and Lemma \ref{l:2point}.
\end{proof}

\begin{rem}
    For any given closed analytic path $f$ around $S_{\|z\|}[0]$ with $f(0)=z$ and $\|f'(t)\|=1$ for all $t \in \mathbb{T}$, the only other possible path with such properties is $t \mapsto f(2\pi-t)$.
    Hence we shall refer to the map $r_z$ without mentioning the chosen analytic path $f$.
\end{rem}

Using Theorems \ref{t:genrot} and \ref{t:genrefl} for some non-zero $z \in X$,
we define the following:
\begin{enumerate}[(i)]
	\item The map $R_z$ is called the \emph{$z$-reflection}.
	\item The map $r_z$ is called the \emph{$z$-rotation}.
	\item The map $x \mapsto r_z(x,\theta)$ is called the \emph{$z$-rotation by $\theta$}.
	\item If a map $f:X \rightarrow X$ has the property that $f=R_z$ for some non-zero $z \in X$ then $f$ is a \emph{reflection},
	and if $f = r_z (\cdot ,\theta)$ for some non-zero $z \in X$ then $f$ is a \emph{rotation by $\theta$}.
	\item If a map $g:X \times \mathbb{T} \rightarrow X$ has the property that $g=r_z$ for some non-zero $z \in X$ then $g$ is a \emph{rotation}.
\end{enumerate}
See Figure \ref{fig:rotrefl} for examples of $z$-reflections and $z$-rotations for $\ell_4^2$.

\begin{figure}[htp]
\begin{tikzpicture}[scale=1.2]

\draw [dashed,domain=0:90] plot ({(cos(\x))^(2/3)}, {(sin(\x))^(2/3)});
\draw [dashed,domain=90:180] plot ({-(-cos(\x))^(2/3)}, {(sin(\x))^(2/3)});
\draw [dashed,domain=180:270] plot ({-(-cos(\x))^(2/3)}, {-(-sin(\x))^(2/3)});
\draw [dashed,domain=270:360] plot ({(cos(\x))^(2/3)}, {-(-sin(\x))^(2/3)});

\draw [dashed,domain=0:90] plot ({1.3*(cos(\x))^(2/3) +1.6}, {1.3*(sin(\x))^(2/3)+0.7});
\draw [dashed,domain=90:180] plot ({-1.3*(-cos(\x))^(2/3) +1.6}, {1.3*(sin(\x))^(2/3)+0.7});
\draw [dashed,domain=180:270] plot ({-1.3*(-cos(\x))^(2/3) +1.6}, {-1.3*(-sin(\x))^(2/3)+0.7});
\draw [dashed,domain=270:360] plot ({1.3*(cos(\x))^(2/3) +1.6}, {-1.3*(-sin(\x))^(2/3)+0.7});

\node[vertex] (o) at (0,0) {};
\node[vertex] (z) at (1.6,0.7) {};
\node[vertex] (x) at (0.32,0.97) {};
\node[fadedvertex] (y) at (0.95,-0.55) {};

\node (olabel) at (-0.3,-0.3) {\tiny$(0,0)$};
\node (zlabel) at (1.9,0.7) {\small$z$};
\node (xlabel) at (0.02,1.27) {\small$x$};
\node (ylabel) at (1.35,-1) {\small$R_z(x)$};

\draw[edge] (o)edge(z) {};
\draw[edge] (o)edge(x) {};
\draw[edge] (x)edge(z) {};
\draw[fadededge] (o)edge(y) {};
\draw[fadededge] (y)edge(z) {};





\end{tikzpicture}\qquad\qquad
\begin{tikzpicture}[scale=1.2]

\draw [dashed,domain=0:90] plot ({(cos(\x))^(2/3)}, {(sin(\x))^(2/3)});
\draw [dashed,domain=90:180] plot ({-(-cos(\x))^(2/3)}, {(sin(\x))^(2/3)});
\draw [dashed,domain=180:270] plot ({-(-cos(\x))^(2/3)}, {-(-sin(\x))^(2/3)});
\draw [dashed,domain=270:360] plot ({(cos(\x))^(2/3)}, {-(-sin(\x))^(2/3)});

\draw [dashed,domain=0:90] plot ({1.687*(cos(\x))^(2/3)}, {1.687*(sin(\x))^(2/3)});
\draw [dashed,domain=90:180] plot ({-1.687*(-cos(\x))^(2/3)}, {1.687*(sin(\x))^(2/3)});
\draw [dashed,domain=180:270] plot ({-1.687*(-cos(\x))^(2/3)}, {-1.687*(-sin(\x))^(2/3)});
\draw [dashed,domain=270:360] plot ({1.687*(cos(\x))^(2/3)}, {-1.687*(-sin(\x))^(2/3)});

\node[vertex] (o) at (0,0) {};
\node[vertex] (z) at (0,1.687) {};
\node[vertex] (x) at (-0.794,0.794) {};

\draw[edge] (o)edge(z) {};
\draw[edge] (o)edge(x) {};
\draw[edge] (x)edge(z) {};

\node (olabel) at (0.5,0) {\tiny$(0,0)$};
\node (zlabel) at (0, 2) {\small$z$};
\node (xlabel) at (-1.2,0.794) {\small$x$};

\draw[->,domain=100:190] plot ({2.2*cos(\x)}, {2.2*sin(\x)});

\node[fadedvertex] (z') at (-1.34,-1.34) {};
\node[fadedvertex] (x') at (-0.3,-0.995) {};

\node (z'label) at (-1.8,-1.7) {\small$r_z(z,t)$};
\node (x'label) at (-0.1,-1.3) {\small$r_z(x,t)$};

\draw[fadededge] (o)edge(z') {};
\draw[fadededge] (o)edge(x') {};
\draw[fadededge] (x')edge(z') {};

\draw[->,domain=100:190] plot ({2.2*cos(\x)}, {2.2*sin(\x)});

\end{tikzpicture}
\caption{(Left) A $z$-reflection in $\ell_4^2$ of a point $x$; the grey point represents its new position $R_z(x)$. (Right) A $z$-rotation in $\ell_4^2$ of a point $x$; the grey triangle represents the new positions $r_z(x,t)$ and $r_z(z,t)$ of $x$ and $z$ respectively.}\label{fig:rotrefl}
\end{figure}
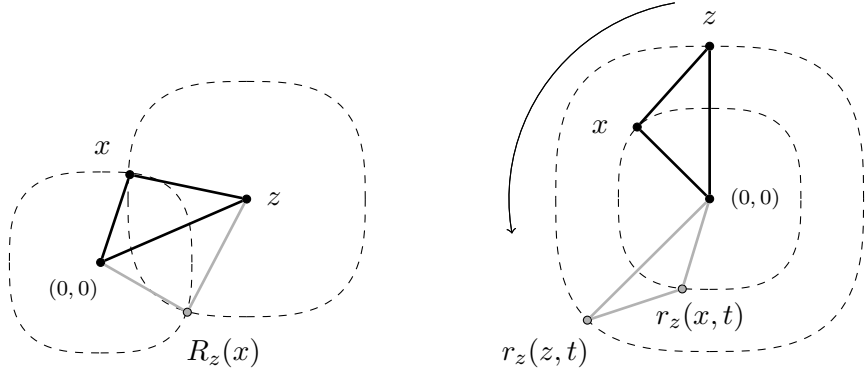

The following result proves that we can at times change the order of rotations and reflections.

\begin{cor}\label{c:switch}
    Let $X$ be an analytic normed plane and $z \in X$ a non-zero point.
    Then for any $x \in X$ and $t \in \mathbb{T}$ we have
    \begin{align*}
        r_z( R_z(x),t) = R_{r_z(x,t)}( r_z(x,t)).
    \end{align*}
\end{cor}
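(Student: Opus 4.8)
The plan is to recognise both sides of the claimed identity as points constrained by the \emph{same} pair of distances, and then to use the connected-component bookkeeping built into Theorems~\ref{t:genrefl} and~\ref{t:genrot} to resolve the two-fold ambiguity in Lemma~\ref{l:2point}. Throughout, the relevant reflection on the right is the reflection $R_{r_z(z,t)}$ across the rotated axis $\spann\{r_z(z,t)\}$ (note $r_z(z,t)=f(t)\neq 0$, so it is well defined).

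First I would dispose of the degenerate case $x \in \spann\{z\}$. Here $R_z(x)=x$ by Theorem~\ref{t:genrefl}(ii), so the left-hand side is $r_z(x,t)$; and if $x=\alpha z$ then $r_z(x,t)=\alpha f(t) \in \spann\{r_z(z,t)\}$, so $r_z(x,t)$ lies on the reflection axis and is fixed by $R_{r_z(z,t)}$, again giving $r_z(x,t)$. Thus both sides equal $r_z(x,t)$ in this case.

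For the main case I would assume $x \notin \spann\{z\}$ and set $u := r_z(x,t)$ and $w := r_z(R_z(x),t)$. Using property~(\ref{l:genrot3}) of Theorem~\ref{t:genrot} together with the defining distance equalities of $R_z$, one computes $\|w\|=\|R_z(x)\|=\|x\|$ and $\|w - r_z(z,t)\|=\|R_z(x)-z\|=\|x-z\|$. On the other side, $R_{r_z(z,t)}$ preserves both the norm and the distance to $r_z(z,t)$, so $\|R_{r_z(z,t)}(u)\|=\|u\|=\|x\|$ and $\|R_{r_z(z,t)}(u)-r_z(z,t)\|=\|u-r_z(z,t)\|=\|x-z\|$. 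Hence $w$ and $R_{r_z(z,t)}(u)$ both lie at distance $\|x\|$ from $0$ and at distance $\|x-z\|$ from $r_z(z,t)$. Since $\|r_z(z,t)\|=\|z\|$ and $0,x,z$ are not colinear, strict convexity of $X$ (the strict triangle inequality) places us in case~(iii) of Lemma~\ref{l:2point}: exactly two points meet these constraints, and they are separated by the line $\spann\{r_z(z,t)\}$.

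The hard part is the final step of deciding which of those two points each side is, and here the argument must track connected components consistently through all three maps. Writing $H,H'$ for the half-planes of $\spann\{z\}$ and assuming $x \in H$: property~(\ref{l:generalreflection3}) of Theorem~\ref{t:genrefl} gives $R_z(x)\in H'$; property~(\ref{l:genrot4}) of Theorem~\ref{t:genrot}, with the homeomorphism property~(\ref{l:genrot1.5}), shows that $r_z(\cdot,t)$ carries $H$ and $H'$ bijectively onto the two distinct half-planes $K,K'$ of $\spann\{r_z(z,t)\}$, so that $u\in K$ while $w\in K'$; and applying property~(\ref{l:generalreflection3}) once more to $R_{r_z(z,t)}$ sends $u\in K$ to $R_{r_z(z,t)}(u)\in K'$. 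Thus both $w$ and $R_{r_z(z,t)}(u)$ lie in $K'$, and since case~(iii) of Lemma~\ref{l:2point} admits a \emph{unique} constraint-satisfying point in each half-plane, they coincide. The main obstacle is exactly this component-tracking: the distance constraints only determine the point up to the reflective ambiguity of Lemma~\ref{l:2point}, so the whole argument hinges on the half-planes being matched up correctly across $R_z$, across $r_z(\cdot,t)$, and across the rotated-axis reflection.
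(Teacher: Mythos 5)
Your proof is correct and follows essentially the same route as the paper's (much terser) argument: both sides satisfy the same two distance constraints, Lemma~\ref{l:2point}(iii) applies by strict convexity, and Theorem~\ref{t:genrot}(\ref{l:genrot4}) together with Theorem~\ref{t:genrefl}(\ref{l:generalreflection3}) places both candidates in the same half-plane of $X \setminus \spann\{r_z(z,t)\}$, forcing them to coincide. You are also right that the subscript in the stated identity should be read as $R_{r_z(z,t)}$ (reflection across the rotated axis), since as literally written the right-hand side would be trivially fixed.
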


\begin{proof}
    If $x \in \spann \{z\}$ then $R_z(x)=x$ and $R_{r_z(x,t)}( r_z(x,t)) = r_z(x,t)$ and the result holds.
    Suppose $x \notin \spann \{z\}$.
    By Theorem \ref{t:genrot}(\ref{l:genrot4}),
    $r_z(R_z(x),t)$ is not in the same connected component of $X \setminus \spann\{ r_z(z,t)\}$ as $r_z(x,t)$.
    The result follows from Lemma \ref{l:2point}.
\end{proof}

We next verify that
our definitions of reflections and rotations are consistent with those given for isometries.

\begin{lem}\label{l:notisom}
	For every linear isometry $g$ of an analytic normed plane $X$,
	there exists non-zero $z \in X$ and $t \in \mathbb{T}$ such that either $g(x) = r_z(x,t)$ for all $x \in X$,
	or $g(x) = r_z(R_z(x),t)$ for all $x \in X$.
\end{lem}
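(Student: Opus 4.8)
The plan is to exploit the fact that a linear isometry is pinned down by its behaviour relative to the two base points $0$ and $z$, in exactly the way a rotation or rotation-reflection is. First I would record that since $g$ is linear we have $g(0)=0$, and since $g$ is an isometry $\|g(z)\|=\|z\|$, so $g(z)$ lies on the sphere $S_{\|z\|}[0]$. By Theorem \ref{t:genrot}(\ref{l:genrot1.6}) the map $r_z(z,\cdot)$ is a homeomorphism from $\mathbb{T}$ onto $S_{\|z\|}[0]$, so there is a unique $t \in \mathbb{T}$ with $r_z(z,t)=g(z)$. This $t$ is the candidate angle and $z$ the candidate axis; every subsequent step is a comparison of $g$ against $r_z(\cdot,t)$.

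Next I would compare $g$ with $r_z(\cdot,t)$ pointwise. For arbitrary $x \in X$, since $g$ is a linear isometry we have $\|g(x)\|=\|x\|$ and $\|g(x)-g(z)\|=\|x-z\|$; as $g(z)=r_z(z,t)$, Theorem \ref{t:genrot}(\ref{l:genrot3}) shows that both $g(x)$ and $r_z(x,t)$ have norm $\|x\|$ and distance $\|x-z\|$ from $r_z(z,t)$. Lemma \ref{l:2point} then controls the possibilities: if $x \in \spann\{z\}$ the point is unique, forcing $g(x)=r_z(x,t)$; if $x \notin \spann\{z\}$ there are exactly two such points, separated by the line $\spann\{g(z)\}$, and by Corollary \ref{c:switch} the second of these is precisely $r_z(R_z(x),t)$. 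Hence for every $x$ we have $g(x) \in \{ r_z(x,t),\, r_z(R_z(x),t) \}$, and the only remaining task is to show a single global choice.

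For this I would use connectedness. Write $\H,\H'$ for the two components of $X \setminus \spann\{z\}$ and $\K,\K'$ for those of $X \setminus \spann\{g(z)\}$. The homeomorphism $g$ maps $\spann\{z\}$ onto $\spann\{g(z)\}$ and therefore carries each of $\H,\H'$ onto one of $\K,\K'$. On the other hand, by Theorem \ref{t:genrot}(\ref{l:genrot4}) (together with Theorem \ref{t:genrefl}(\ref{l:generalreflection3}) applied to $R_z$) the two candidate maps $r_z(\cdot,t)$ and $r_z(R_z(\cdot),t)$ send each fixed component of $X \setminus \spann\{z\}$ into \emph{opposite} components of $X \setminus \spann\{g(z)\}$. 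Fixing a single $x_0 \in \H$ and observing which of $\K,\K'$ the point $g(x_0)$ lies in therefore forces the same one of the two options for every $x \in \H$; the choice on $\H'$ is then forced by the same component bookkeeping, and on $\spann\{z\}$ the two options coincide since $R_z$ fixes that line. This yields $g=r_z(\cdot,t)$ everywhere or $g=r_z(R_z(\cdot),t)$ everywhere.

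The main obstacle I anticipate is this final step: making the component bookkeeping watertight, in particular verifying that $r_z(\cdot,t)$ and $r_z(R_z(\cdot),t)$ genuinely land in opposite half-planes on each component (where Theorem \ref{t:genrot}(\ref{l:genrot4}) and Corollary \ref{c:switch} are doing the real work) and confirming consistency across the line $\spann\{z\}$. Everything else is a direct assembly of Lemma \ref{l:2point}, the rotation and reflection theorems, and the uniqueness built into the choice of $t$.
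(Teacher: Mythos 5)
Your proposal is correct and follows essentially the same route as the paper's proof: fix the unique $t$ with $g(z)=r_z(z,t)$, use Lemma \ref{l:2point} together with Corollary \ref{c:switch} to pin $g(x)$ down to the two candidates $r_z(x,t)$ and $r_z(R_z(x),t)$, and then use Theorem \ref{t:genrot}(\ref{l:genrot4}) to force a single global choice. The paper states this in three sentences; your version simply spells out the half-plane bookkeeping that the paper leaves implicit, and does so correctly.
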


\begin{proof}
	Choose any pair of linearly independent points $y,z \in X$.
	The point $g(z)$ must lie somewhere on the circle $S_{\|z\|}[0]$
	thus, by Theorem \ref{t:genrot}, there exists a unique $t \in \mathbb{T}$ such that $g(z) = r_z(z,t)$.
	By Lemma \ref{l:2point}, Theorems \ref{t:genrefl} and \ref{t:genrot}, and Corollary \ref{c:switch},
	we have that $g(y)$ is equal to either $r_z(y,t)$ or $r_z(R_z(y),t)$.
	By Theorem \ref{t:genrot}(\ref{l:genrot4}), it follows that either $g = r_z(\cdot,t)$ or $g = r_z(R_z(\cdot),t))$.
\end{proof}

We now state a sufficient condition for determining whether a rotation, reflection, or composition of the two is an isometry.

\begin{lem}\label{l:theyareisom}
	Let $X$ be an analytic normed plane,
	and for some $z \in X$ and $t \in \mathbb{T}$ let $g \in \{r_z(\cdot,t),r_z(R_z(\cdot),t)\}$.
	If there exists an open set $U \subset X$ where $\|g(x)-g(y)\|=\|x-y\|$ for all $x,y \in X$,
	then $g$ is a linear isometry.
\end{lem}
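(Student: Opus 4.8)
The plan is to build a global affine isometry $g'$ that agrees with $g$ on a small open set using the Mazur--Ulam theorem (Theorem \ref{t:mazurulam}), and then to upgrade this local agreement to a global one: first across one of the two half-planes cut out by $\spann\{z\}$ via analytic continuation, and then across the other half-plane by a geometric argument based on Lemma \ref{l:2point}. (I read the hypothesis as asserting that $g$ is an isometry on $U$, since otherwise $U$ plays no role.)

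First I would record the features of $g$ that drive everything. Both $r_z(\cdot,t)$ and $R_z$ preserve the norm and fix $0$, and $R_z$ fixes $z$, so in either case $\|g(x)\|=\|x\|$ for all $x$, $g(0)=0$, and $g(z)=r_z(z,t)=f(t)$. Moreover $g$ is analytic on $X\setminus\spann\{z\}$: in the rotation case by Theorem \ref{t:genrot}(\ref{l:genrot2}), and in the composite case because $R_z$ is an analytic immersion sending $X\setminus\spann\{z\}$ into itself (Theorem \ref{t:genrefl}(\ref{l:generalreflection2}),(\ref{l:generalreflection3})) followed by the analytic map $r_z(\cdot,t)$. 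Since $\spann\{z\}$ is a null line, the open set $U$ meets one of the two open half-planes $\H,\H'$ of $\spann\{z\}$; choosing a connected open ball $B\subseteq U$ with $B\cap\H\neq\emptyset$ and applying Theorem \ref{t:mazurulam} to $B$ yields an affine isometry $g'$ with $g'|_B=g|_B$. As $g$ and $g'$ are both analytic on the connected open set $\H$ and agree on the nonempty open set $B\cap\H$, Proposition \ref{p:anvar} applied coordinatewise to $g-g'$ forces $g=g'$ throughout $\H$, hence on $\overline{\H}\supseteq\spann\{z\}$ by continuity. In particular $g'(0)=0$ and $g'(z)=f(t)$, so $g'$ is a \emph{linear} isometry.

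The crux is to extend $g=g'$ to the remaining half-plane $\H'$, which analytic continuation cannot reach since $\H$ and $\H'$ are distinct connected components of $X\setminus\spann\{z\}$. For any $x\in\H'$, both $g(x)$ and $g'(x)$ satisfy $\|\,\cdot\,\|=\|x\|$ and $\|\,\cdot\,-f(t)\|=\|x-z\|$: for $g(x)$ this is Theorem \ref{t:genrot}(\ref{l:genrot3}), together with $\|R_z(x)\|=\|x\|$ and $\|R_z(x)-z\|=\|x-z\|$ in the composite case; for $g'(x)$ it follows from $g'$ being a linear isometry with $g'(0)=0$ and $g'(z)=f(t)$. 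Since $x\notin\spann\{z\}$, strict convexity (Lemma \ref{prop:strconv}) places us in case (iii) of Lemma \ref{l:2point}, so there are exactly two such points and they lie on opposite sides of $\spann\{f(t)\}$. It therefore suffices to show $g(x)$ and $g'(x)$ lie on the \emph{same} side. Here I would track images of half-planes: in both cases one locates $g(\H')$ inside one open half-plane of $\spann\{f(t)\}$ using Theorem \ref{t:genrot}(\ref{l:genrot4}) (and Theorem \ref{t:genrefl}(\ref{l:generalreflection3}) in the composite case), while the linear isomorphism $g'$ carries the two half-planes of $\spann\{z\}$ onto the two half-planes of $\spann\{f(t)\}$ in a way determined by its agreement with $g$ on $\H$; comparing these shows $g(\H')$ and $g'(\H')$ lie in the same half-plane. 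Lemma \ref{l:2point} then gives $g(x)=g'(x)$ for all $x\in\H'$, so $g=g'$ on $X$ and $g$ is the linear isometry $g'$.

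The main obstacle is exactly this last step. The two-valued nature of Lemma \ref{l:2point} means that agreement of $g$ and $g'$ on $\H$ does not propagate automatically across $\spann\{z\}$, and one genuinely needs the connected-component bookkeeping of Theorems \ref{t:genrot}(\ref{l:genrot4}) and \ref{t:genrefl}(\ref{l:generalreflection3}) to select the correct one of the two candidate points on $\H'$; the norm- and distance-preservation only constrain $g(x)$ to a pair of points, and without the side information the reflection case in particular could a priori pick the wrong preimage.
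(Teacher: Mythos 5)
Your proof is correct and rests on the same pillars as the paper's --- the Mazur--Ulam theorem, analytic continuation, Lemma \ref{l:2point}, and the component-tracking properties of Theorems \ref{t:genrefl} and \ref{t:genrot} --- but it assembles them differently in two places. First, to get agreement with an affine isometry on all of $\H$, the paper analytically continues the scalar function $(x,y)\mapsto \|g(x)-g(y)\|-\|x-y\|$ on the connected set $\H\times\H$ (so it only needs the norm to be analytic) and then applies Theorem \ref{t:mazurulam} once to the whole half-plane; you instead apply Theorem \ref{t:mazurulam} to a small ball and analytically continue $g-g'$ itself, which additionally requires the analyticity of $g$ on $X\setminus\spann\{z\}$ from Theorem \ref{t:genrefl}(\ref{l:generalreflection2}) and Theorem \ref{t:genrot}(\ref{l:genrot2}) --- a legitimate but strictly stronger input. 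Second, and more substantively, your treatment of the other half-plane is genuinely different: the paper proves that $g|_{\H'}$ is again an isometry via the substitution $x':=z-x$ (which maps $\H'$ into $\H$), applies Mazur--Ulam a second time, and then glues the two affine maps using the fact that their images of $\H$ and $\H'$ cannot overlap; you bypass this entirely by noting that for $x\in\H'$ both $g(x)$ and $g'(x)$ solve the same two-distance problem relative to $0$ and $f(t)$, and then using Theorem \ref{t:genrot}(\ref{l:genrot4}) (plus Theorem \ref{t:genrefl}(\ref{l:generalreflection3}) in the composite case) together with the fact that the linear bijection $g'$ permutes half-planes to conclude they lie on the same side of $\spann\{f(t)\}$, whence Lemma \ref{l:2point}(iii) forces $g(x)=g'(x)$. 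Your route needs only one application of Mazur--Ulam and avoids the $z-x$ trick, at the cost of slightly more careful half-plane bookkeeping; both arguments are sound, and you correctly identified that the side-selection step is where the real content lies.
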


\begin{proof}
	Let $\H,\H'$ be the open half-planes formed by the line $\ell$ through $0,z$.
	Suppose $\H \cap U \neq \emptyset$.
	As the set of points $(x,y) \in \H \times \H$ where $\|g(x)-g(y)\|=\|x-y\|$ is the zero set of an analytic function with connected domain $\H \times \H$,
	then $U=\H$ by Proposition \ref{p:anvar2}.	
	By Theorem \ref{t:mazurulam},
	there exists a unique linear isometry $h : X \rightarrow X$ where $h|_\H=g|_\H$,
	and by continuity we also have $h_\ell=g_\ell$.
	
	Choose any two points $x,y \in \H'$ and define $x' := z-x$ and $y' := z-y$.
	Since $\|g(x')\|= \|x'\|$ and $\|g(x') - g(z)\| = \|x'-z\|$ (and similarly for $y'$),
	it follows from Lemma \ref{l:2point} that either $g(x') = g(z) - g(x)$ or $g(x') = R_{g(z)}(g(z) - g(x))$.
	Since the latter would contradict Theorem \ref{t:genrot}(\ref{l:genrot4}),
	we have $g(x') = g(z) - g(x)$ and (since $x$ is arbitrary) $g(y') = g(z) - g(y)$.
	Hence
	\begin{eqnarray*}
		\|g(x) - g(y)\| &=& \| (g(z) - g(x)) - (g(z) - g(y)) \| \\
		&=& \|g(x') - g(y')\| \\
		&=& \|x'-y'\| \qquad \qquad \text{ (as $x',y' \in \H$)} \\
		&=& \|x-y\|.
	\end{eqnarray*}
	By Theorem \ref{t:mazurulam},
	there exists a unique linear isometry $h' : X \rightarrow X$ where $h'|_{\H'}=g|_{\H'}$;
	further,
	by continuity we note that $h'_\ell=g_\ell$.
	We also note that by Theorem \ref{t:genrot}(\ref{l:genrot4}),
	the sets $h(\H)$ and $h'(\H')$ cannot overlap.
	As both $h,h'$ agree on the line $\ell$, $h(\H) \cap h'(\H') = \emptyset$ and $g$ is bijective, it follows that $h=h' = g$.
\end{proof}

\begin{rem}
    It can be noted that all the results in this section except Theorem \ref{t:genrefl}(\ref{l:generalreflection2}) and Theorem \ref{t:genrot}(\ref{l:genrot2}) will still hold if we allow $X$ to be any smooth and strictly convex normed plane. 
    These two results can fail in such generality.
\end{rem}

\subsection{Normal points of reflections and rotations}

\begin{defn}\label{defn:nice}
	Let $K_4$ be the complete graph on the vertices $v_0,v_1,v_2,v_3$.
	For a normed plane $X$, a
	point $z \in X \setminus \{0\}$ is \emph{normal} if there exists a completely strongly regular framework $(K_4,p)$ with $p_{v_0} = 0$ and $p_{v_1} = z$.
	Given a normal point $z$,
	a point $y \in X \setminus \{0,z\}$ is \emph{$z$-normal} if there exists a completely strongly regular framework $(K_4,p)$ with $p_{v_0} = 0$, $p_{v_1} = z$ and $p_{v_2} = y$.
\end{defn}

Both normal and $z$-normal points have a slightly technical definition which will become important later on.
In analytic normed planes, almost all points will be normal.

\begin{prop}\label{prop:znice}
    Let $X$ be an analytic normed plane.
    Then the following holds.
    \begin{enumerate}[(i)]
        \item \label{prop:znice1} The set of normal points of $X$ is open and conull.
        \item \label{prop:znice2} For any normal point $z$,
        the set of $z$-normal points of $X$ is open and conull.
    \end{enumerate}
\end{prop}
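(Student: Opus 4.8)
The plan is to derive everything from the fact that $\csreg(V;X)$ is an open conull subset of $X^V$ (Proposition \ref{p:comreg}), combined with the analytic rigid-body machinery of Theorems \ref{t:genrefl} and \ref{t:genrot}. Write $V=\{v_0,v_1,v_2,v_3\}$. Since every rigidity map $f_H$ is translation-invariant, so is each $\sreg(H;X)$, and hence $\csreg(V;X)$ is invariant under the diagonal translation action of $X$ on $X^V$. Consequently its restriction $\csreg_0$ to the pinned slice $\Pi:=\{p\in X^V:p_{v_0}=0\}\cong X^3$, coordinatised by $(p_{v_1},p_{v_2},p_{v_3})$, is again open and conull in $\Pi$. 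By Definition \ref{defn:nice} together with translation-invariance, the set of normal points is exactly $\pi_1(\csreg_0)$, where $\pi_1\colon\Pi\to X$, $p\mapsto p_{v_1}$, is a linear (hence open) surjection; note $p_{v_1}\neq 0$ automatically since well-positionedness forces $p_{v_0}\neq p_{v_1}$.

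For part (i), openness of $\pi_1(\csreg_0)$ is immediate. For conullity, the complement of $\csreg_0$ is null in $\Pi\cong\bR^6$, so by Fubini applied to the linear fibration $\pi_1$ the fibre $\pi_1^{-1}(z)\cap\csreg_0$ is nonempty for almost every $z\in X$; thus $X\setminus\pi_1(\csreg_0)$ is null, proving (i).

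For part (ii) fix a normal point $z$ and set $S_z:=\{p\in\Pi:p_{v_1}=z\}$, which I identify with the open connected well-positioned set $W_z\subset X^2$ in the coordinates $(p_{v_2},p_{v_3})$ (it is $X^2$ with finitely many codimension-two sets removed). The $z$-normal points form $\pi_2(\csreg_0\cap S_z)$ with $\pi_2(p)=p_{v_2}$, so openness is again clear, and by Fubini inside the plane $S_z$ it suffices to prove that $\csreg_0\cap S_z$ is conull in $S_z\cong\bR^4$. The essential difficulty is that $S_z$ is itself null in $\Pi$, so conullity of $\csreg_0$ in $\Pi$ says nothing about this particular slice; here the analyticity of the norm and the normality of $z$ must both be used. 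I would first treat the completely regular part: complete regularity on $W_z$ is the non-vanishing of the analytic functions $\psi_H:=\sum(\text{maximal minors of }R(H,\cdot))^2$ as $H$ ranges over graphs on $V$, and because $z$ is normal there is a point of $W_z$ at which every $(H,\cdot)$ is regular, so each $\psi_H$ is not identically zero on the connected set $W_z$. Proposition \ref{p:anvar} then shows each zero set is null, so the completely regular placements are conull in $W_z$.

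The main obstacle is to show that, on this completely regular locus, the strongly regular conditions fail only on a null set. Using the reduction in the proof of Proposition \ref{p:comreg} one has $\csreg(V;X)=\bigcap_{H\ \mathrm{independent}}\sreg(H;X)$, so it suffices to treat the finitely many independent graphs $H$ on $V$. Restricting $f_H$ to $S_z$ gives an analytic map $h_H$, and a transversality computation shows that on a conull subset of the completely regular locus $\rank\,dh_H=\min(4,|E(H)|)$. When $|E(H)|\le 4$ the map $h_H$ is a submersion there; since the critical values of $f_H$ are null by Sard's theorem (Theorem \ref{t:sard}), Lemma \ref{l:submer} shows the non-strongly-regular placements meet $S_z$ in a null set. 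The remaining graphs are those with finite $h_H$-fibres over $S_z$ (such as $K_4$ and $K_4$ minus an edge), and this is exactly where the new machinery earns its keep: every placement equivalent to a completely regular $(H,p)\in S_z$ is obtained from $p$ by the analytic rotations $r_z(\cdot,t)$ of Theorem \ref{t:genrot} and the analytic reflections $R_{(\cdot)}$ of Theorem \ref{t:genrefl}. Strong regularity of $(H,p)$ thus becomes the condition that $\psi_H$ does not vanish along these analytically parametrised orbits; since $z$ is normal the orbit through the witnessing placement avoids the critical set, so by one-dimensional analyticity along each rotation orbit (Proposition \ref{p:analytic1d}) together with Proposition \ref{p:anvar} the bad set is a proper analytic subset of $W_z$, hence null. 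Intersecting the finitely many null contributions yields $\csreg_0\cap S_z$ conull in $S_z$, completing (ii). I expect this last step — strong regularity on a \emph{fixed} null slice, where Fubini is unavailable and one must instead exploit the analytic parametrisation of equivalent frameworks by rotations and reflections — to be the genuine heart of the argument.
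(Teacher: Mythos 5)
Your part (i) is correct and is essentially the paper's own argument: restrict the open conull set of (completely) strongly regular placements to the pinned slice $p_{v_0}=0$ and project onto the $v_1$-coordinate, with Fubini giving conullity of the image. Your framing of part (ii) also matches the paper up to a point: the paper likewise works inside the null slice $Z=\{p:p_{v_0}=0,\ p_{v_1}=z\}$, uses the witness supplied by normality of $z$ together with Proposition \ref{p:anvar} to make the regular locus conull in the slice, and you correctly identify that Fubini is unavailable there. One repairable slip first: for any $H$ containing the edge $v_0v_1$, the corresponding row of $dh_H$ vanishes identically on the slice, so your claim $\rank dh_H=\min(4,|E(H)|)$ is false as stated; one must drop that coordinate from the codomain, as the paper does by setting $h_G:=f_G|_Z^{\mathbb{R}^{E\setminus\{v_0v_1\}}}$.

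The genuine gap is in your ``hard case''. First, the assertion that every placement equivalent to $(H,p)$ with $p$ in the slice is reached by $z$-rotations and $z$-reflections holds only when every vertex other than $v_0,v_1$ is joined to both $v_0$ and $v_1$ in $H$; for an independent five-edge graph such as $K_4$ minus the edge $v_0v_1$, the equivalent placements form a one-parameter family that does not preserve $\|q_{v_0}-q_{v_1}\|$ and is not of this form, so your parametrisation says nothing about it. Second, and more seriously, even where the parametrisation is valid the conclusion does not follow: your bad set is $\{p\in W_z:\exists t\in\mathbb{T}\text{ with the orbit constraints and }\psi_H(q(p,t))=0\}$, i.e.\ the projection to the four-dimensional $W_z$ of an analytic subset of the five-dimensional $W_z\times\mathbb{T}$. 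Nullity is not preserved under projection from a higher-dimensional space, and the fact that for each fixed $p$ the set of bad $t$ is finite (which is all Proposition \ref{p:analytic1d} gives along an orbit) does not make the set of $p$ admitting some bad $t$ null. The paper avoids this entirely: it never parametrises the fibre by rotations, but instead sets $W:=\{x\in Z:h_G^{-1}(h_G(x))\subset Y\}$ and shows $W$ is conull by adapting Lemma \ref{l:sard2} to the restricted map $h_G$ (which is an immersion on its regular locus, so the image of the null irregular locus meets the locally four-dimensional image in a null set whose preimage is null), concluding with Lemma \ref{l:infglobal1}. To close your argument you would need either that fibre-level mechanism or a dimension count showing the bad set is locally a proper analytic subset of the constraint variety in $W_z\times\mathbb{T}$, hence at most three-dimensional before projecting; your sketch supplies neither.
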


\begin{proof}
    (\ref{prop:znice1}):
    By Proposition \ref{p:comreg},
    the set $\sreg (K_4;X)$ is an open conull set.
    As translations do not effect strong regularity,
    the set of all placements $p$ that correspond to a strongly regular framework $(K_4,p)$ with $p_{v_0}=0$ is an open conull subset of $\{0\} \times X^{\{v_1,v_2,v_3\}}$.
    The set of normal points is now a projection of this subset onto its $v_1$ coordinate.
    The result now follows as projections of open conull sets are open and conull.
    
    (\ref{prop:znice2}):
    Fix a normal point $z$ and a graph $G=(V,E) \cong K_4$ with distinct vertices $v_0,v_1,v_2$.
    Define the linear subspace $Z := \{ x \in X^V : x_{v_0}=0 , x_{v_1}=z\}$ and the analytic map $h_G := f_G|_{Z}^{\mathbb{R}^{E\setminus \{v_0 v_1\} }}$.
    By using similar methods to Proposition \ref{prop:regcreg} with $f_G$ replaced by $h_G$ and $X_G$ replaced with $Z \cap X_G$,
    we see that the set $Y := \{ x \in Z : x \text{ is a regular point of $h_G$} \}$ is an open conull subset of $Z$.
    Define $W := \{ x \in Z : h^{-1}_G(h_G(x)) \subset Y\}$.
    By Theorem \ref{thm:rigidne},
    $h_G|_{Y_G}$ is a submersion.
    By applying similar methods to Lemma \ref{l:sard2} and Proposition \ref{p:comreg},
    we see that $W$ is a conull subset of $Z$.
    The result now follows from Lemma \ref{l:infglobal1}.
\end{proof}

\begin{rem}
Proposition \ref{prop:znice}, which will suffice for our purposes, tells us that almost all points in any analytic normed space are normal. However we suspect that this can be strengthened to say that all non-zero points will be normal.
\end{rem}

The usefulness of normal points can be seen in the following two results.

\begin{thm}\label{l:sisom2}
	Let $X$ be an analytic normed plane,
	$z \in X$ be normal and $\H$ an open half plane formed by the line through $0$ and $z$.
	Then there exists a comeagre subset of points $y \in \H$ such that the following holds;
	for any $t \in \mathbb{T}$, 
	if $r_z(\cdot,t)$ is not an isometry then there exists an open conull subset of $\H$ of points $x$ where
	\begin{align*}
		\| r_z (x,t) - r_z(y,t) \| \neq \| x-y\|.
	\end{align*}
\end{thm}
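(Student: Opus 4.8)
The plan is to reduce the statement to a dichotomy for a single analytic ``distance-defect'' function and then to isolate one meagre exceptional set of base points. For $x,y \in \H$ with $x \neq y$ and $t \in \mathbb{T}$, set
\[
F(x,y,t) := \frac{1}{2}\|r_z(x,t)-r_z(y,t)\|^2 - \frac{1}{2}\|x-y\|^2 .
\]
Since $\H \subset X \setminus \spann\{z\}$, Theorem \ref{t:genrot}(\ref{l:genrot2}) makes $F$ analytic on its domain, and $F=0$ is equivalent to the equality of the two norms. For fixed $(y,t)$ the function $x \mapsto F(x,y,t)$ is analytic on the connected open set $\H \setminus \{y\}$, so Proposition \ref{p:anvar} gives that either $F(\cdot,y,t) \equiv 0$ or its zero set is a closed null set, in which case its complement is exactly an open conull set of $x$ on which the required inequality holds. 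Hence the theorem follows once I produce a comeagre $C \subseteq \H$ such that $F(\cdot,y,t) \not\equiv 0$ for every $y \in C$ and every $t$ for which $r_z(\cdot,t)$ is not an isometry.

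First I reformulate $F(\cdot,y,t) \equiv 0$ geometrically. By Theorem \ref{t:genrot}(\ref{l:genrot3}) the map $r_z(\cdot,t)$ always preserves $\|x\|$ and the $z$-distance $\|r_z(x,t)-r_z(z,t)\| = \|x-z\|$, so a direct check of all six edge-lengths shows that, writing $p(x):=(0,z,y,x)$, the hypothesis $F(\cdot,y,t)\equiv 0$ forces $(K_4,p(x))$ and $(K_4, r_z(\cdot,t)\circ p(x))$ to be equivalent for \emph{every} $x \in \H$. I restrict to $z$-normal $y$, which by Proposition \ref{prop:znice}(\ref{prop:znice2}) form an open conull, hence comeagre, subset of $\H$; for such $y$ I fix a witness $x_0 \in \H$ with $p(x_0)$ completely strongly regular (the set of such witnesses being conull by the argument of Proposition \ref{prop:znice}). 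Then $(K_4,p(x_0))$ is infinitesimally rigid and strongly regular, and $q_0 := r_z(\cdot,t)\circ p(x_0)$ lies in the finite, all-regular fibre of $f_{K_4}$ through $p(x_0)$ with $v_0 \mapsto 0$.

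If $q_0$ were congruent to $p(x_0)$, an isometry $g$ would fix $0$ and send $z \mapsto r_z(z,t)$ and $y \mapsto r_z(y,t)$; by Lemma \ref{l:notisom} together with Theorem \ref{t:genrot}(\ref{l:genrot4}) this forces $g = r_z(\cdot,t)$, contradicting that $t$ is not an isometry time. So, for a non-isometry $t$, membership $y \in B_t := \{y : F(\cdot,y,t)\equiv 0\}$ can occur only when $q_0$ is a genuinely non-congruent equivalent placement of $p(x_0)$. As that fibre is finite (properness of $f_{K_4}$ restricted to $\{v_0=0\}$ and strong regularity, exactly as in Lemma \ref{l:infglobal1}) and each equivalent placement $p'$ determines a unique angle $s(p') \in \mathbb{T}$ via $p'_{v_1}=r_z(z,s(p'))$ (Theorem \ref{t:genrot}(\ref{l:genrot1.6})), it follows that $\{t : y \in B_t\}$ is finite for every $z$-normal $y$.

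It remains to prove that $\mathrm{Bad} := \{\, y : y \in B_t \text{ for some non-isometry } t\,\}$ is meagre, after which $C := \{z\text{-normal points}\} \setminus \mathrm{Bad}$ is comeagre. The plan is to study the locus $\Sigma := \{(y,t) \in \H \times \mathbb{T} : F(\cdot,y,t)\equiv 0\} = \bigcap_{x \in D}\{F(x,\cdot,\cdot)=0\}$ for a countable dense $D \subseteq \H$, so that $\mathrm{Bad} = \pi_{\H}(\Sigma \setminus (\H \times \mathcal{R}))$, where $\mathcal{R}$ is the finite set of isometry times. Because $X$ is non-Euclidean each hypersurface $\{F(x,\cdot,\cdot)=0\}$ is proper; away from the trivial sheets $\H \times \{t\}$ with $t \in \mathcal{R}$, selecting two points of $D$ whose defect-functions have independent $(y,t)$-gradients confines $\Sigma$ locally to a one-dimensional analytic set, whose $\pi_{\H}$-image is nowhere dense, and summing over a countable cover yields $\mathrm{Bad}$ meagre. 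The main obstacle is precisely this dimension count: ruling out extra two-dimensional components of $\Sigma$ over non-isometry times, i.e.\ guaranteeing the two independent gradients at each bad point. I expect to resolve it via the two-dimensional analytic structure theorem (Proposition \ref{p:anvar2}) applied in the $(y,t)$-plane after slicing, using strict convexity (Lemma \ref{prop:strconv}) and Lemma \ref{l:2point} to argue that the gradients cannot all align unless $r_z(\cdot,t)$ already preserves distances on an open set, which by Lemma \ref{l:theyareisom} would make it an isometry.
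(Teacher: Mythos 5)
Your reduction is sound as far as it goes: the first step is exactly the paper's Lemma \ref{l:nots} (apply Proposition \ref{p:anvar} to $x\mapsto F(x,y,t)$ on $\H\setminus\{y\}$), and your finiteness of $\{t: y\in B_t\}$ for $z$-normal $y$ is the paper's Lemma \ref{l:sfin}. But the statement you defer --- that $\mathrm{Bad}$ is meagre --- is the entire content of the theorem, and your proposed resolution does not close it. Concretely: (a) Proposition \ref{p:anvar2} is a structure theorem for analytic maps on open subsets of $\mathbb{R}^2$, so it cannot be applied directly to $\Sigma\subset\H\times\mathbb{T}$; once you ``slice'' by $t$ you are analysing the sets $A_t:=\{y: F(\cdot,y,t)\equiv 0\}$ one $t$ at a time, and the trichotomy only tells you that each $A_t$ is countable or contains a smooth curve. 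Ruling out the curve case is not a gradient-alignment argument: the paper needs a genuinely geometric proof (Lemmas \ref{l:aline}, \ref{l:pathint2} and \ref{l:apath}) that a smooth arc in $A_t$ forces distance preservation on an open set of \emph{pairs} --- via secant lines meeting the arc twice, collinearity preservation from strict convexity, and an affine extension --- before Lemma \ref{l:theyareisom} can be invoked. Your sentence ``the gradients cannot all align unless $r_z(\cdot,t)$ already preserves distances on an open set'' asserts precisely this implication without proof; alignment of gradients at a point gives nothing about distances on an open set.

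(b) More seriously, even if every non-isometry slice $A_t$ were shown to be countable, $\mathrm{Bad}=\bigcup_{t\notin\mathcal{R}}A_t$ is a union over \emph{uncountably} many $t$, and countably many points per slice do not make the union meagre. This is the crux the paper spends most of Section \ref{sec:tech} on: Lemmas \ref{l:bandiff}--\ref{l:hcover} establish that on a suitable open dense set $\H''$ the elements of $S_y$ are tracked by finitely many continuous functions $f_1,\dots,f_k$ of $y$, and the intermediate-value argument in Lemma \ref{l:aisom} shows that any non-extremal value $t$ attained by some $f_j$ forces $A_t$ to be uncountable (hence, by the curve analysis, an isometry time). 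Only this reduces the exceptional set of times to a countable $W$ with $\bigcup_{t\in W}A_t\cap\H''$ countable. Your proposal contains no substitute for this continuity-of-roots step, so the passage from ``1-dimensional slices'' to ``meagre projection'' is unsupported. In short, you have correctly set up the problem and identified where the difficulty lies, but the proof of the difficult part is missing rather than merely sketched differently.
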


\begin{thm}\label{l:srefl}
	Let $X$ be an analytic normed plane,
	$z \in X$ be normal and $\H$ an open half plane formed by the line through $0$ and $z$.
	Then there exists a comeagre subset of points $y \in \H$ such that the following holds;
	for any $t \in \mathbb{T}$,
	if $r_z(R_z(\cdot),t)$ is not an isometry then there exists an open conull subset of $\H$ of points $x$ where
	\begin{align*}
		\| r_z (R_z(x),t) - r_z(R_z(y),t) \| \neq \| x-y\|.
	\end{align*}
\end{thm}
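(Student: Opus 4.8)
The plan is to reduce the distance identity to a statement about the rigidity of the complete graph $K_4$, and then to exploit the completely strongly regular frameworks supplied by $z$-normality. Write $g := r_z(R_z(\cdot\,),t)$ and note first that $g$ behaves like the rotation as far as the two base points are concerned: since $R_z$ preserves the distances to $0$ and $z$ (Theorem \ref{t:genrefl}(i)) and $r_z(\cdot,t)$ preserves the distances to $0$ and $r_z(z,t)$ (Theorem \ref{t:genrot}(\ref{l:genrot3})), we have $g(0)=0$, $g(z)=r_z(z,t)$, and $\|g(x)\|=\|x\|$, $\|g(x)-g(z)\|=\|x-z\|$ for every $x$. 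Fixing $K_4$ on $v_0,v_1,v_2,v_3$ and setting $p(x):=(0,z,y,x)$ and $q(x):=(0,g(z),g(y),g(x))$, the five edges meeting $v_0$ or $v_1$ therefore automatically have equal length in $(K_4,p(x))$ and $(K_4,q(x))$, so that the single remaining condition $\|g(x)-g(y)\|=\|x-y\|$ is exactly equivalent to $(K_4,q(x))\sim(K_4,p(x))$.

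I would take the promised comeagre set of $y$ to be the set of $z$-normal points, which is open and conull, and hence comeagre, by Proposition \ref{prop:znice}(\ref{prop:znice2}). Fix such a $y$ and an angle $t$ for which $g$ is not an isometry, and consider
\[
\Theta(x) := \tfrac12\|g(x)-g(y)\|^2-\tfrac12\|x-y\|^2 .
\]
Because $R_z$ maps $\H$ into the opposite half plane as an analytic immersion (Theorem \ref{t:genrefl}(\ref{l:generalreflection2}),(\ref{l:generalreflection3})) and $r_z(\cdot,t)$ is analytic off the relevant line (Theorem \ref{t:genrot}(\ref{l:genrot2})), the map $g$ is analytic on $\H$; as $g$ is injective (Theorems \ref{t:genrefl}(\ref{l:generalreflection1}) and \ref{t:genrot}(\ref{l:genrot1.5})) and the square of the norm is analytic away from $0$, the function $\Theta$ is analytic on the connected open set $\H\setminus\{y\}$. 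By Proposition \ref{p:anvar} its zero set is either a closed null set or all of $\H\setminus\{y\}$. In the first case $\H\setminus Z(\Theta)$ is the required open conull set of points $x$, so it remains only to rule out the second case.

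Suppose then that $\Theta\equiv 0$, i.e. $g$ preserves the distance to $y$ from every $x\in\H$; by continuity $g$ then preserves every pairwise distance among $0,z,y$, so $(K_4,q(x))\sim(K_4,p(x))$ for all $x$. Since $y$ is $z$-normal and the set of completely strongly regular placements is open (Proposition \ref{p:comreg}), the set $\Omega\subset\H$ of points $x$ for which $(0,z,y,x)$ is completely strongly regular is open and non-empty; as $K_4$ is $(2,2)$-tight, each such $(K_4,p(x))$ is regular and infinitesimally rigid (Theorem \ref{thm:22normed}), hence locally rigid (Theorem \ref{t:asimowroth}). The goal is to promote the family of equivalences $q(x)\sim p(x)$, $x\in\Omega$, to the assertion that $g$ coincides with a single linear isometry $G$ (with $G(z)=g(z)$ and $G(y)=g(y)$) on an open subset of $\H$: once this is known, $g$ agrees with $G$ there, whence $g$ is a linear isometry by Lemma \ref{l:theyareisom} (which is stated precisely for maps of the form $r_z(R_z(\cdot),t)$), contradicting the choice of $t$.

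The hard part is exactly this promotion, and it is where the completely strongly regular hypothesis is essential. With $v_0$ pinned at the origin, local rigidity says that near each $p(x)$ the only equivalent frameworks are congruent ones, and a congruence fixing $v_0=0$ is realised by a linear isometry; moreover, because the first three vertices of both $p(x)$ and $q(x)$ are held fixed while only $v_3$ varies, Lemma \ref{l:2point} pins the fourth vertex down to the single point lying in the correct half plane (here Theorems \ref{t:genrefl}(\ref{l:generalreflection3}) and \ref{t:genrot}(\ref{l:genrot4}) control which component $g(x)$ occupies). I would use these facts, together with the analyticity of $x\mapsto p(x)$ and $x\mapsto q(x)$ and the connectedness of $\Omega$, to show that the isometry realising the congruence at one point of $\Omega$ cannot jump and must realise it throughout $\Omega$; ensuring that this branch of solutions to the trilateration problem stays globally consistent is the delicate step, and is the reason the argument is deferred to the technical section. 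The proof of the rotation case (Theorem \ref{l:sisom2}) is identical after replacing $g=r_z(R_z(\cdot),t)$ by $r_z(\cdot,t)$.
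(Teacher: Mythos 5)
Your reduction of the ``easy half'' is correct and matches the paper's Lemma \ref{l:nots}: for a fixed $y$ and $t$, the function $\Theta$ is analytic on the connected open set $\H\setminus\{y\}$, so by Proposition \ref{p:anvar} its zero set is either closed and null (done) or all of $\H\setminus\{y\}$. The genuine gap is in the other half: you take the comeagre set to be the set of $z$-normal points and claim that for such $y$, the identity $\Theta\equiv 0$ forces $g=r_z(R_z(\cdot),t)$ to be an isometry. This is precisely the hard content of the theorem, and your sketch of it is circular. You propose to ``promote'' the family of equivalences $(K_4,q(x))\sim(K_4,p(x))$, $x\in\Omega$, to agreement of $g$ with a single linear isometry $G$ satisfying $G(z)=g(z)$ and $G(y)=g(y)$ --- but the existence of \emph{any} isometry matching $g$ on $\{0,z,y\}$ is exactly what is in question; if $g$ is not an isometry there may be none. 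Local rigidity of $(K_4,p(x))$ only controls equivalent frameworks \emph{near} $p(x)$, and $q(x)$ is not near $p(x)$; and $K_4$, being $(2,2)$-tight, is minimally rigid and not redundantly rigid, so one cannot appeal to any form of global rigidity of $K_4$ to convert equivalence into congruence. What $z$-normality actually yields in the paper is only that $S_y$ is \emph{finite} (Lemma \ref{l:sfin}); it does not exclude the finitely many ``bad'' parameters $t\in S_y$ for which $r_z(R_z(\cdot),t)$ fails to be an isometry.

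The paper closes this gap with an entirely different mechanism, occupying Sections 6.3--6.5. For each fixed $t$ it studies the set $A_t$ of bad points $y$ and shows (Lemmas \ref{l:aline}, \ref{l:pathint2}, \ref{l:apath}) that if $A_t$ contains a smooth curve then one can manufacture an \emph{open set of pairs} $(x_1,x_2)$ whose mutual distance is preserved --- via collinearity preservation and a line-through-two-curve-points argument --- which by analyticity and Lemma \ref{l:theyareisom} forces $r_z(\cdot,t)$ to be an isometry. It then runs a $C^1(\mathbb{T})$ perturbation argument (Lemmas \ref{l:bandiff}--\ref{l:hcover1}) to show $S_y$ varies continuously on an open dense set $\H''$, and concludes (Lemmas \ref{l:aisom}, \ref{l:sisom}) that outside a countable union of countable sets in $\H''$ every remaining $y$ has $S_y$ consisting only of isometry parameters. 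The resulting comeagre set is genuinely smaller than the set of $z$-normal points; your proposal supplies no substitute for this machinery, so the central step is missing.
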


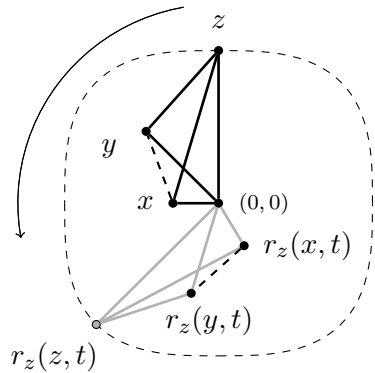
\begin{figure}[htp]
\begin{tikzpicture}[scale=1.2]

\draw [dashed,domain=0:90] plot ({1.687*(cos(\x))^(2/3)}, {1.687*(sin(\x))^(2/3)});
\draw [dashed,domain=90:180] plot ({-1.687*(-cos(\x))^(2/3)}, {1.687*(sin(\x))^(2/3)});
\draw [dashed,domain=180:270] plot ({-1.687*(-cos(\x))^(2/3)}, {-1.687*(-sin(\x))^(2/3)});
\draw [dashed,domain=270:360] plot ({1.687*(cos(\x))^(2/3)}, {-1.687*(-sin(\x))^(2/3)});

\node[vertex] (o) at (0,0) {};
\node[vertex] (z) at (0,1.687) {};
\node[vertex] (y) at (-0.794,0.794) {};
\node[vertex] (x) at (-0.5,0) {};

\draw[edge] (o)edge(z) {};
\draw[edge] (o)edge(x) {};
\draw[edge] (x)edge(z) {};
\draw[edge] (o)edge(y) {};
\draw[edge] (y)edge(z) {};
\draw[edge, thick, dashed] (x)edge(y) {};

\node (olabel) at (0.5,0) {\tiny$(0,0)$};
\node (zlabel) at (0, 2) {\small$z$};
\node (ylabel) at (-1.2,0.6) {\small$y$};
\node (xlabel) at (-.8,0) {\small$x$};

\draw[->,domain=100:190] plot ({2.2*cos(\x)}, {2.2*sin(\x)});

\node[fadedvertex] (z') at (-1.34,-1.34) {};
\node[vertex] (y') at (-0.3,-0.995) {};
\node[vertex] (x') at (0.28, -0.47) {};

\node (z'label) at (-1.8,-1.7) {\small$r_z(z,t)$};
\node (y'label) at (-0.1,-1.3) {\small$r_z(y,t)$};
\node (y'label) at (0.98, -0.47) {\small$r_z(x,t)$};

\draw[fadededge] (o)edge(z') {};
\draw[fadededge] (o)edge(x') {};
\draw[fadededge] (x')edge(z') {};
\draw[fadededge] (o)edge(y') {};
\draw[fadededge] (y')edge(z') {};
\draw[edge, thick, dashed] (x')edge(y') {};

\draw[->,domain=100:190] plot ({2.2*cos(\x)}, {2.2*sin(\x)});

\end{tikzpicture}
\caption{A $z$-rotation of two points $x,y$ in $\ell_4^2$. The distance between the points $x$ and $y$ will (for most choices of $x,y$) change during the $z$-rotation.}\label{fig:distchange}
\end{figure}

During a $z$-rotation, the distance between two points will usually change unless the rotation is an isometry; see for example Figure \ref{fig:distchange}.
What we will need to avoid are points $y$ where the $z$-rotation is not an isometry, 
but every point maintains their distance from $y$ during the $z$-rotation.
To see why the existence of these points are bad,
suppose that for all values of $y$ there exists $t_y \in \mathbb{T}$ where for any point $x$ we have $\|r_z(x,t_y) - r_z(y,t_y)\| = \|x-y\|$, but $r_z(\cdot,t_y)$ is not an isometry.
Fix $p$ to be a placement of the graph $K_5^-$ (see Figure \ref{fig:smallgraphs1}) in an analytic normed plane $X$ with degree 4 vertices $v_0, v_z, v_y$ at 0, $z$ and $y$ respectively.
Now define $q$ to be the placement of $K_5^-$ where $q_v = r_z(p_v,t_y)$ for each vertex $v$.
Then $(K_5^-,q)$ is equivalent to $(K_5^-,p)$,
but it is not congruent, as $r_z(\cdot,t_y)$ is not an isometry.
What Theorem \ref{l:sisom2} (and similarly Theorem \ref{l:srefl} for the $z$-reflection + $z$-rotation analogue) prove is that any such bad points like the previously mentioned $y$ form a meagre set,
hence we can essentially avoid them by some careful choices for our placements.

We shall defer the rather technical proofs required for Theorems \ref{l:sisom2} and \ref{l:srefl} until Section \ref{sec:tech}.

\section{Global rigidity in analytic normed planes}
\label{sec:global}

In this section we utilise our newly defined generalised reflections and rotations to prove multiple results regarding global rigidity in analytic normed planes.

\subsection{$K_5^-$ and $H$ are globally rigid}

We first need the following technical result.

\begin{lem}\label{l:semirefl}
	Let $X$ be an analytic normed plane, $z \in X$ be non-zero and $\H$ be one of the open half-planes formed by the line through $0,z$.
	Then for any $y \in \H$ and $t \in \mathbb{T}$,
	the set 
	\begin{align*}
		S(y,t) := \left\{ x \in \H :\| x-y\| \notin \{ \| r_z (x,t) - r_z(R_z(y),t) \|, \, \| r_z (R_z(x),t) - r_z(y,t) \| \}  \right\}
	\end{align*}
	is an open conull subset of $\H$.
\end{lem}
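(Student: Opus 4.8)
The plan is to split the complement of $S(y,t)$ into the two failure sets coming from the two displayed equalities, show each is a closed null subset of $\H$ by an analyticity argument, and take their union. Fix $y \in \H$ and $t \in \mathbb{T}$ and define
\begin{align*}
	G_1(x) &:= \| x-y\|^2 - \| r_z (x,t) - r_z(R_z(y),t) \|^2, \\
	G_2(x) &:= \| x-y\|^2 - \| r_z (R_z(x),t) - r_z(y,t) \|^2,
\end{align*}
so that $\H \setminus S(y,t) = Z(G_1) \cup Z(G_2)$ (after replacing norms by their squares, which does not change the equality sets).

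First I would verify that each $G_i$ is analytic on the connected open set $\H \setminus \{y\}$ and continuous on all of $\H$. The term $\|x-y\|^2$ is analytic precisely away from $x=y$. For the remaining terms, recall that $R_z$ is an analytic immersion on $\H$ and carries $\H$ into the opposite half-plane $\H'$ by Theorem \ref{t:genrefl}(\ref{l:generalreflection2}),(\ref{l:generalreflection3}), while $r_z(\cdot,t)$ is analytic on $(X\setminus\spann\{z\})\times\mathbb{T}$ and is a homeomorphism by Theorem \ref{t:genrot}(\ref{l:genrot2}),(\ref{l:genrot1.5}); hence $x \mapsto r_z(x,t)$ and $x \mapsto r_z(R_z(x),t)$ are analytic on $\H$. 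Since $\|\cdot\|^2$ is analytic off the origin, I only need the two norm arguments to be nonzero: by injectivity of $r_z(\cdot,t)$ they vanish exactly when $x = R_z(y)$ (for $G_1$) or $R_z(x)=y$ (for $G_2$), both impossible because $R_z(\H) \subset \H'$ is disjoint from $\H$.

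The key step is to rule out $G_i \equiv 0$ while simultaneously placing $y$ inside $S(y,t)$. Evaluating at $x=y$ gives $G_i(y) = -\| r_z(y,t) - r_z(R_z(y),t)\|^2$; since $y\in\H$ does not lie on the line through $0,z$ we have $R_z(y)\neq y$, so injectivity of $r_z(\cdot,t)$ forces $r_z(R_z(y),t)\neq r_z(y,t)$ and hence $G_i(y) < 0$. Thus $y \notin Z(G_i)$, and were $G_i$ identically zero on $\H\setminus\{y\}$, continuity would yield $G_i(y)=0$, a contradiction. Applying Proposition \ref{p:anvar} to $G_i$ on the connected open set $\H\setminus\{y\}$ therefore shows $Z(G_i)$ is a closed null subset of $\H\setminus\{y\}$; being the zero set of a map continuous on $\H$ with $G_i(y)\neq 0$, it is in fact a closed null subset of $\H$. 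Consequently $\H\setminus S(y,t) = Z(G_1)\cup Z(G_2)$ is a union of two closed null sets, so $S(y,t)$ is open and conull.

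I expect the only delicate point to be the behaviour at $x=y$: the term $\|x-y\|^2$ destroys analyticity exactly there, so Proposition \ref{p:anvar} cannot be applied on all of $\H$ at once. The fix is to work on $\H\setminus\{y\}$ and then re-incorporate $y$ using continuity together with the strict inequality $G_i(y)<0$, which conveniently serves double duty by also excluding the identically-zero case. Everything else is routine bookkeeping with the established analyticity and injectivity properties of $r_z$ and $R_z$.
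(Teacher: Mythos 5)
Your proof is correct and takes essentially the same route as the paper's: the paper simply multiplies the two differences into a single function $f$ on $\H$, observes $f(y)\neq 0$ (justified via Corollary \ref{c:switch} and Theorem \ref{t:genrefl} rather than your injectivity-of-$r_z(\cdot,t)$ argument, which works equally well), and applies Proposition \ref{p:anvar}. Your write-up is in fact a little more careful than the paper's about the failure of analyticity at $x=y$ and about why the norm arguments never vanish on $\H$, but the underlying idea is identical.
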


\begin{proof}
	Define the continuous function $f :\H \rightarrow \mathbb{R}$ where for each $x \in \H$ we have
	\begin{align*}
		f(x) := \left( \| r_z (x,t) - r_z(R_z(y),t) \| - \| x-y\|\right)\left( \| r_z (R_z(x),t) - r_z(y,t) \| - \| x-y\|\right).
	\end{align*}
	By Corollary \ref{c:switch} we have $r_z(R_z(y),t) = R_{r_z(y,t)}(r_z(y,t))$,
	and so by Theorem \ref{t:genrefl} we have $\| r_z (y,t) - r_z(R_z(y),t) \| \neq 0$.
	As $f(y) \neq 0$,
	the zero set of $f$ is a closed null set by Proposition \ref{p:anvar}.	
\end{proof}

We will now show that the graphs $K_5^-$ and $H$ depicted in Figure \ref{fig:smallgraphs1} are globally rigid in $X$. We will then deduce from the global rigidity of $K_5^-$ that all 3-laterations graphs (defined below) on at least 5 vertices are globally rigid.
One motivation for showing that $H$ is globally rigid in $X$ is that $H$ is the smallest globally rigid graph in $X$ that is not globally rigid in the Euclidean plane. In the Euclidean context one may use the 2-vertex-separation containing the two vertices of degree 5 to define a line to reflect one part of the graph through. We will give a second motivation for analysing these two specific graphs in the concluding remarks.

 \begin{figure}[htp]
\begin{center}
\begin{tikzpicture}[scale=.4]
\filldraw (-.5,0) circle (3pt)node[anchor=east]{};
\filldraw (0,3) circle (3pt)node[anchor=east]{};
\filldraw (3.5,0) circle (3pt)node[anchor=west]{};
\filldraw (3,3) circle (3pt)node[anchor=west]{};
\filldraw (1.5,-1.5) circle (3pt)node[anchor=west]{};

 \draw[black,thick]
(1.5,-1.5) -- (-.5,0) -- (0,3) -- (3.5,0) -- (3,3) -- (-.5,0) -- (3.5,0);

\draw[black,thick]
(1.5,-1.5) -- (0,3) -- (3,3) -- (1.5,-1.5);

        \end{tikzpicture}
          \hspace{0.5cm}
     \begin{tikzpicture}[scale=.4]
\filldraw (0,0) circle (3pt)node[anchor=east]{};
\filldraw (0,3.5) circle (3pt)node[anchor=east]{};
\filldraw (3.5,0) circle (3pt)node[anchor=north]{};
\filldraw (3.5,3.5) circle (3pt)node[anchor=south]{};
\filldraw (7,0) circle (3pt)node[anchor=west]{};
\filldraw (7,3.5) circle (3pt)node[anchor=west]{};

 \draw[black,thick]
(0,0) -- (0,3.5) -- (3.5,0) -- (3.5,3.5) -- (0,0) -- (3.5,0) -- (7,3.5);

\draw[black,thick]
(0,3.5) -- (3.5,3.5) -- (7,3.5) -- (7,0) -- (3.5,3.5);

\draw[black,thick]
(7,0) -- (3.5,0);

\end{tikzpicture}
\end{center}
\vspace{-0.3cm}
\caption{The graphs $K_5^-$ (left) and $H$ (right).}
\label{fig:smallgraphs1}
\end{figure}
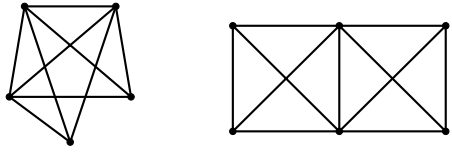

\begin{lem}\label{lem:k5-e}
	Let $X$ be an analytic normed plane and $G \cong K_5^-$ with vertex set $V=\{v_1, \ldots, v_5\}$ and missing edge $v_4 v_5$.
	Let $Q$ be the set of all frameworks $(G,p)$ where $p_{v_1}=0$, $p_{v_2} \neq 0$ and $p_{v_3},p_{v_4},p_{v_5}$ lie on the same side of the line through $p_{v_1}$ and $p_{v_2}$.
	Then $Q$ contains an open dense subset $Q'$ where for all $p \in Q'$,
	the framework $(G,p)$ is globally and infinitesimally rigid in $X$.
\end{lem}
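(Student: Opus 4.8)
The plan is to first prove that the set $D$ of placements $p \in Q$ for which $(G,p)$ is \emph{both} infinitesimally and globally rigid is dense in $Q$, and then to upgrade density to an open dense subset. Since a non-Euclidean analytic normed plane has only finitely many linear isometries, Theorem~\ref{thm:ave} shows that each $p \in D$ has an open neighbourhood consisting of infinitesimally and globally rigid placements; the union of these neighbourhoods, intersected with the open set $Q$, is then the required $Q'$. For the infinitesimal part, I would note that $K_5^-$ contains a spanning $(2,2)$-tight subgraph (delete, say, $v_1v_2$), which is minimally rigid for a suitable placement by Theorem~\ref{thm:22normed}, so $G$ itself is rigid in $X$; hence by Theorem~\ref{t:asimowroth} every regular placement of $G$ is infinitesimally rigid, and by Proposition~\ref{prop:regcreg} these form an open conull (hence comeagre) set. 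It therefore remains to show that a comeagre set of $p \in Q$ is globally rigid.

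For the global part, write $z := p_{v_2}$ and $y := p_{v_3}$, let $\H$ be the open half-plane bounded by $\spann\{z\}$ containing $p_{v_3},p_{v_4},p_{v_5}$, and assume (Proposition~\ref{prop:znice}) that $z$ is normal. Let $(G,q)$ be equivalent to $(G,p)$. After translating so that $q_{v_1}=0$ we have $\|q_{v_2}\|=\|z\|$, so by Theorem~\ref{t:genrot}(\ref{l:genrot1.6}) there is a unique $t_0 \in \mathbb{T}$ with $q_{v_2}=r_z(z,t_0)$. As $y \notin \spann\{z\}$, Lemma~\ref{l:2point} gives exactly two points at the prescribed distances from $q_{v_1},q_{v_2}$, and by Theorem~\ref{t:genrot}(\ref{l:genrot3}) and Corollary~\ref{c:switch} these are $r_z(y,t_0)$ and $r_z(R_z(y),t_0)$; thus $q_{v_3}$ equals one of them. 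The same argument places each of $q_{v_4},q_{v_5}$ at one of $r_z(p_{v_i},t_0)$ or $r_z(R_z(p_{v_i}),t_0)$, with the distance to $q_{v_3}$ selecting which.

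The crux is the two edges $v_3v_4$ and $v_3v_5$. Preserving the length of $v_3v_4$ forces $t_0$ to be a zero of one of the analytic functions $t \mapsto \|r_z(\sigma(p_{v_4}),t) - r_z(\tau(y),t)\| - \|p_{v_4}-y\|$, with $\sigma,\tau \in \{\mathrm{id},R_z\}$ recording the reflection choices, and similarly for $v_3v_5$. Each such map has the one-dimensional domain $\mathbb{T}$, so by Proposition~\ref{p:analytic1d} its zero set is finite unless it vanishes identically. Theorems~\ref{l:sisom2} and~\ref{l:srefl} together with Lemma~\ref{l:semirefl} show that, for $y$ in a comeagre set and (by a Kuratowski--Ulam/Fubini argument applied to the analytic zero set in $\H \times \mathbb{T}$) for $p_{v_4}$ in a comeagre set, none of these functions vanishes identically except when the corresponding map $r_z(\cdot,t)$ or $r_z(R_z(\cdot),t)$ is a linear isometry. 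This yields a finite set of ``rogue'' non-isometric parameters $t_0$ coming from $v_4$; choosing $p_{v_5}$ from a further comeagre set, avoiding the finitely many null sets of points whose $v_3$-distance is preserved at those rogue parameters, makes the rogue sets for $v_4$ and $v_5$ disjoint.

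Consequently, for a comeagre set of $p \in Q$, the only $t_0$ compatible with both $v_3v_4$ and $v_3v_5$ are those for which $g := r_z(\cdot,t_0)$ or $g := r_z(R_z(\cdot),t_0)$ is a linear isometry (Lemma~\ref{l:notisom}). For such $g$ the placement $g\circ p$ is congruent to $p$ and agrees with $q$ on $v_1,v_2,v_3$; imposing finitely many further generic conditions (one per linear isometry, via Lemma~\ref{l:semirefl}) that exclude the reflected candidates and ensure the three distances to $v_1,v_2,v_3$ determine $q_{v_4},q_{v_5}$ uniquely then gives $q = g\circ p$, so $(G,q)$ is congruent to $(G,p)$. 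Intersecting the countably many comeagre conditions keeps the set dense, and Theorem~\ref{thm:ave} completes the argument. The main obstacle is exactly the parameter-dependence in Theorems~\ref{l:sisom2},~\ref{l:srefl} and Lemma~\ref{l:semirefl}: the exceptional placements are negligible for each fixed rotation angle but vary with the angle, so a single apex (that is, $K_4$) cannot rule out the finitely many rogue angles, whereas the two apices of $K_5^-$ let the finiteness from Proposition~\ref{p:analytic1d} be leveraged to make the rogue-angle sets disjoint.
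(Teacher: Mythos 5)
Your proposal is correct and follows essentially the same route as the paper's proof: reduce to exhibiting a single infinitesimally and globally rigid placement via Theorem \ref{thm:ave}, parametrise equivalent placements by a rotation angle $t_0$ together with a reflection pattern using Theorems \ref{t:genrefl} and \ref{t:genrot} and Corollary \ref{c:switch}, let the edge $v_3v_4$ cut the non-isometric angles down to a finite ``rogue'' set, and then choose $p_{v_5}$ generically (via Theorems \ref{l:sisom2}, \ref{l:srefl} and Lemma \ref{l:semirefl}) so that the edge $v_3v_5$ eliminates every rogue angle. The only local divergence is that you obtain finiteness of the rogue set from analyticity in $t$ (Proposition \ref{p:analytic1d}) plus a Fubini-type argument, whereas the paper gets it directly from the complete strong regularity of the chosen placement; both are valid.
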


\begin{proof}
Choose any placement $q \in Q$ such that:
 $q_{v_1}=0$ and $q_{v_2}$ is normal;
$q_{v_3}$ satisfies the properties of Theorems \ref{l:sisom2} and \ref{l:srefl} with $y= q_{v_3}$ and $z = q_{v_2}$; and $q$ is completely strongly regular (and hence also in general position).
Let $\H$ denote the open half-plane formed by the line through $q_{v_1}$ and $q_{v_2}$ containing $q_{v_3}$.
	The existence of a dense subset of such placements in $Q$ is guaranteed by Propositions \ref{p:comreg} and \ref{prop:znice} and Theorems \ref{l:sisom2} and \ref{l:srefl}.
	
	As $G$ contains a $(2,2)$-tight spanning subgraph then $(G,q)$ is infinitesimally rigid by Theorem \ref{thm:rigidne}.
	Define $B_\epsilon(q_{v_5})$ to be the open $\epsilon$-ball with center $q_{v_5}$,
	where $\epsilon >0$ is chosen so that $B_\epsilon(q_{v_5}) \subset \H$.
	It follows from Theorem \ref{thm:ave} that we only need to prove the following will hold for any sufficiently small $\epsilon > 0$ (i.e.~so that infinitesimal rigidity is preserved):
	there exists a dense subset of $B_\epsilon(q_{v_5})$ of points $x$ where if we set $p = (q_{v_1},q_{v_2},q_{v_3},q_{v_4},x)$ then the framework $(G,p)$ will be globally rigid.
	
	If $(G,q)$ is globally rigid then we may choose $p=q$ and we are done.
	Suppose otherwise.
	As $(G,q)$ is strongly regular and $X$ is non-Euclidean,
	there exists at most a finite $S \subset \mathbb{T}$ such that for each $t \in S$,
	both $r_z(\cdot,t)$ and $r_z( R_z(\cdot),t)$ are not isometries and one of the following holds:
	\begin{eqnarray*}
		\|r_z( q_{v_3},t) - r_z(q_{v_4},t)\| &=& \| q_{v_3} - q_{v_4}\|, \\
		\|r_z(q_{v_3},t) - r_z(R_z(q_{v_4}),t)\| &=& \| q_{v_3} - q_{v_4}\|, \\
		\|r_z( R_z(q_{v_3}),t) - r_z(q_{v_4},t)\| &=& \| q_{v_3} - q_{v_4}\|, \\
		\|r_z( R_z(q_{v_3}),t) - r_z(R_z(q_{v_4}),t)\| &=& \| q_{v_3} - q_{v_4}\|.
	\end{eqnarray*}
	By our choice of $q_{v_3}$ and Lemma \ref{l:semirefl},
	there exists a comeagre set of points $x \in \H$ such that for each $t \in S$,	
	\begin{eqnarray*}
		\|r_z( q_{v_3},t) - r_z(x,t)\| &\neq& \| q_{v_3} - x\|, \\
		\|r_z(q_{v_3},t) - r_z(R_z(x),t)\| &\neq& \| q_{v_3} - x\|, \\
		\|r_z( R_z(q_{v_3}),t) - r_z(x,t)\| &\neq& \| q_{v_3} - x\|, \\
		\|r_z( R_z(q_{v_3}),t) - r_z(R_z(x),t)\| &\neq& \| q_{v_3} - x\|,
	\end{eqnarray*}
	and $\| x - q_{v_5}\| < \epsilon$.
	We now define $p$ to be the placement of $G$ where $p_{v_i} := q_{v_i}$ for $i \in \{1,2,3,4\}$ and $q_{v_5} :=x$.
	Suppose $(G,p') \sim (G,p)$.
	By translating we may assume $p'_{v_1} = 0$,
	thus it follows that 
	\begin{align*}
		p'_{v_2} = r_z(p_{v_2},t) = r_z(z,t)
	\end{align*}
	for some $t \in \mathbb{T}$;
	further,
	we must have 
	\begin{align*}
		p'_{v_i} \in \{ r_z(p_{v_i},t) , r_z(R_z(p_{v_i}),t)\}
	\end{align*}
	for each $i=3,4,5$.
	However,
	by our choice of $x \in \H$,
	we note that none of these possibilities can hold unless $p' \sim p$.
	It follows that $(G,p)$ is globally rigid as required.
\end{proof}

Our next theorem now follows immediately from Lemma \ref{lem:k5-e}.

\begin{thm}\label{thm:k5-e}
	Let $X$ be an analytic normed plane.
	Then $K_5-e$ is globally rigid in $X$.
\end{thm}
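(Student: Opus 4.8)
The plan is to derive Theorem \ref{thm:k5-e} directly from Lemma \ref{lem:k5-e}, which has already done essentially all of the hard analytic work. Recall the definition of global rigidity of a graph in $X$: the graph $G$ is globally rigid in $X$ precisely when the set $\grig(G;X)$ of globally rigid placements has non-empty interior in $X^V$. So to prove that $K_5 - e \cong K_5^-$ is globally rigid, it suffices to exhibit a single placement that is globally rigid and has an open neighbourhood of globally rigid placements around it.

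First I would invoke Lemma \ref{lem:k5-e}: with $G \cong K_5^-$, the lemma produces a set $Q$ of suitably normalised frameworks containing an open dense subset $Q'$ every element of which is both globally rigid and infinitesimally rigid in $X$. Since $Q'$ is open and dense in $Q$, and $Q$ has non-empty interior in the relevant slice of placements, $Q'$ is in particular non-empty, so there exists at least one placement $p$ of $K_5^-$ that is simultaneously globally rigid and infinitesimally rigid. This is exactly the hypothesis needed to feed into Corollary \ref{cor:ave}: an analytic normed plane $X$ has finitely many linear isometries (since it is strictly convex by Lemma \ref{prop:strconv}, and strictly convex non-Euclidean normed planes admit only finitely many linear isometries — this is what underlies the phrase ``as $X$ is non-Euclidean'' used throughout the proof of Lemma \ref{lem:k5-e}), and the corollary states that in such an $X$, a graph $G$ is globally rigid if and only if there exists an infinitesimally and globally rigid framework $(G,p)$.

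Thus the final step is simply: apply Corollary \ref{cor:ave} with the framework $(K_5^-, p)$ obtained from $Q'$ to conclude that $K_5^-$ is globally rigid in $X$. Concretely, Theorem \ref{thm:ave} guarantees an open neighbourhood $U$ of $p$ on which every placement remains globally (and infinitesimally) rigid, so $U \subseteq \grig(K_5^-;X)$ witnesses that $\grig(K_5^-;X)$ has non-empty interior. Since the two names $K_5 - e$ and $K_5^-$ refer to the same graph, this is precisely the assertion of Theorem \ref{thm:k5-e}.

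I do not anticipate a genuine obstacle here, since Lemma \ref{lem:k5-e} and Corollary \ref{cor:ave} are tailored to fit together. The only points requiring a moment of care are bookkeeping ones: confirming that the placement supplied by $Q'$ satisfies the normalisation ($p_{v_1} = 0$) demanded by the ``isometrically full / $p_{v_0}=0$'' machinery behind Corollary \ref{cor:ave}, and noting that translating a globally and infinitesimally rigid framework preserves both properties, so the normalisation costs nothing. All the delicate analytic content — the construction of generalised rotations and reflections, the comeagre-set arguments of Theorems \ref{l:sisom2} and \ref{l:srefl}, and the verification that the rotated images fail to recombine into a congruent framework — has been absorbed into Lemma \ref{lem:k5-e}, leaving the theorem as a one-line corollary.
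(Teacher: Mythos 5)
Your proposal is correct and matches the paper's own treatment: the paper derives Theorem \ref{thm:k5-e} as an immediate consequence of Lemma \ref{lem:k5-e}, with Theorem \ref{thm:ave}/Corollary \ref{cor:ave} (already invoked inside the lemma's proof) supplying the passage from a single globally and infinitesimally rigid placement to an open set of such placements in $X^V$. Your extra bookkeeping about the normalisation $p_{v_1}=0$ and the finiteness of $\Isolin(X)$ for non-Euclidean analytic planes is exactly the (implicit) content the paper relies on, so there is nothing to add.
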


We can also use a similar method to prove that $H$ is globally rigid.

\begin{thm}\label{thm:H}
	Let $X$ be an analytic normed plane.
	Then $H$ is globally rigid in $X$.
\end{thm}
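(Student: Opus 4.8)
The plan is to mimic the structure of the proof of Theorem \ref{thm:k5-e} (via Lemma \ref{lem:k5-e}), adapting it to the graph $H$. The graph $H$ has a natural symmetry: it consists of two copies of $K_4$ sharing an edge (the two degree-5 vertices), together with the extra connecting edges visible in Figure \ref{fig:smallgraphs1}. As with $K_5^-$, my first step would be to identify the two distinguished vertices — here the two vertices of degree $5$, say $u_0$ and $u_1$ — and normalise by translating so that $p_{u_0}=0$ and taking $z := p_{u_1}$ to be a normal point. The line through $0,z$ then separates $X$ into two open half-planes $\H,\H'$, and the remaining four vertices split into two pairs (the ``left'' $K_4$ vertices and the ``right'' $K_4$ vertices). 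I would place one pair in $\H$ and the other in $\H'$, thereby encoding the Euclidean reflection obstruction (described in the caption of Figure \ref{fig:grexamples}) into the half-plane structure.

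Next I would set up the genericity hypotheses exactly as in Lemma \ref{lem:k5-e}: choose the placement $q$ so that $q_{u_0}=0$, $z=q_{u_1}$ is normal, the ``anchor'' vertices in each half-plane satisfy the conclusions of Theorems \ref{l:sisom2} and \ref{l:srefl} (so that non-isometric $z$-rotations and $z$-reflection--rotation compositions change distances from those anchors on an open conull set), and $q$ is completely strongly regular. Existence of a dense supply of such $q$ follows from Propositions \ref{p:comreg} and \ref{prop:znice} together with Theorems \ref{l:sisom2} and \ref{l:srefl}. Since $H$ is $(2,2)$-tight (it has $6$ vertices and $10=2\cdot 6 -2$ edges, and one checks the sparsity count for subgraphs), Theorem \ref{thm:rigidne} gives that $(H,q)$ is infinitesimally rigid, and by Theorem \ref{thm:ave} it suffices to perturb a single free vertex within a small ball inside its half-plane and prove global rigidity on a dense subset of that ball.

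The analytic core is the same averaging argument. Suppose $(H,p')\sim (H,p)$ with $p'_{u_0}=0$; by Lemma \ref{l:notisom} every linear isometry is a $z$-rotation or a $z$-rotation composed with $R_z$, so matching the norm of $p_{u_1}$ forces $p'_{u_1}=r_z(z,t)$ for some $t\in\mathbb{T}$, and then each remaining vertex $v$ must satisfy $p'_v\in\{r_z(p_v,t),r_z(R_z(p_v),t)\}$. Strong regularity and non-Euclideanness restrict the ``dangerous'' angles $t$ (at which some inter-vertex distance is accidentally preserved by a non-isometric map) to a finite set $S\subset\mathbb{T}$, exactly as in the displayed system of equations in Lemma \ref{lem:k5-e}. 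Invoking Lemma \ref{l:semirefl} at the free vertex against each of the fixed vertices in both half-planes, I would find a comeagre (hence dense) set of perturbations $x$ for which, at every $t\in S$, all four distance equations fail; this kills every non-isometric possibility for $p'$, forcing $p'\sim p$.

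The main obstacle I anticipate is the bookkeeping at the shared edge and the cross-edges of $H$: unlike $K_5^-$, where only one missing edge had to be controlled, here there are more independent distance constraints linking the two half-planes, and I must verify that Lemma \ref{l:semirefl} can be applied simultaneously to enough vertex pairs spanning both $\H$ and $\H'$ so that \emph{every} combination in $\{r_z(\cdot,t),r_z(R_z(\cdot),t)\}$ across the four non-distinguished vertices is excluded. Concretely, the subtlety is that a candidate $p'$ could reflect some vertices and rotate others in a consistent way; I must ensure the edge set of $H$ (in particular the edges joining the two halves) constrains these choices enough that the only consistent option respecting all preserved lengths is a genuine isometry. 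Provided the edges of $H$ link each free vertex to anchors on \emph{both} sides of the line — which the structure in Figure \ref{fig:smallgraphs1} supplies — the comeagre-avoidance argument from Lemma \ref{lem:k5-e} goes through verbatim, and global rigidity of $H$ in $X$ follows.
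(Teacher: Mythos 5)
Your high-level strategy (normalise at the two degree-$5$ vertices, parametrise equivalent frameworks by $z$-rotations and $z$-reflection--rotation composites, cut the dangerous angles down to a finite set $S$ via strong regularity, kill $S$ by a generic choice of one free vertex, and finish with Theorem \ref{thm:ave}) is indeed the paper's strategy. But the step you flag as the ``main obstacle'' is exactly where your argument breaks, and your proposed fix rests on a misreading of $H$. Two factual slips first: $H$ has $11$ edges, not $10$, so it is not $(2,2)$-tight (it merely contains a spanning $(2,2)$-tight subgraph, or one argues via the two rigid $K_4$'s sharing two vertices, as the paper does); and there are \emph{no} edges between $\{v_1,v_2\}$ and $\{v_5,v_6\}$ --- the only links between the two $K_4$'s pass through the shared degree-$5$ vertices, which sit \emph{on} the line through $0$ and $z$. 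So your claim that ``the edges of $H$ link each free vertex to anchors on both sides of the line'' is false; the absence of such edges is precisely why $H$ fails to be globally rigid in the Euclidean plane. Consequently the $K_5^-$ argument does not go through ``verbatim'': in $K_5^-$ the missing edge is the only unconstrained pair, whereas in $H$ the two pairs $\{v_1,v_2\}$ and $\{v_5,v_6\}$ can a priori be moved by \emph{different} elements of $\{r_z(\cdot,t),R_{r_z(z,t)}\circ r_z(\cdot,t)\}$, each pair constrained only by its own internal edge, and Lemma \ref{l:semirefl} applied at the free vertex cannot reach across to the other $K_4$ because there is no edge to carry the constraint.

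The paper's resolution, absent from your proposal, is a specific placement trick: since $v_1v_5\notin E$, one may put $p_{v_1}=p_{v_5}=y$ at the \emph{same} $z$-normal point, with $p_{v_2}=w$ and $p_{v_6}=x$ all lying in a single half-plane $\H$ (not in opposite half-planes as you suggest). The finite set $S$ is then defined from the pair $(w,y)$ using the edge $v_1v_2$, so that for $t\notin S$ that edge forces $g_1=g_2$ to be an isometry; and $x$ is chosen \emph{afterwards}, via Lemma \ref{l:semirefl} and the properties of $y$ from Theorems \ref{l:sisom2} and \ref{l:srefl}, so that for every $t\in S$ the edge $v_5v_6$ --- which, thanks to the coincidence $p_{v_5}=y$, runs from $y$ to $x$ --- fails for all four rotate/reflect combinations. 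It is this coincident placement that lets the edge of one $K_4$ control the admissible angles for the other. Without some such mechanism you have not excluded the ``partial reflection'' configurations that you correctly identify as the danger, so as written the proposal has a genuine gap at its crucial step.
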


\begin{proof}
	Label the vertices and edges of $H$ by $V:= \{v_1, \ldots, v_6\}$ and
	\begin{align*}
		E:= \{ v_i v_j : 1\leq i<j\leq 4\} \cup \{ v_i v_j : 3\leq i<j\leq 6\}
	\end{align*}
	respectively.
	Let $z$ be a normal point and let
	$\H$ be one of the open half-planes formed by the line through $0,z$.
 	Choose any $z$-normal point $y \in \H$ that satisfies Theorems \ref{l:sisom2} and \ref{l:srefl}.
	As $y$ is $z$-normal,
	we may choose $w \in \H$ such that the graph $K_4$ with placement $s:=(0,w,y,z)$ gives a strongly regular framework.
	Define $S \subset \mathbb{T}$ to be the set of values $t$ where one of the following holds:
	\begin{enumerate}[(i)]
		\item $r_z( \cdot, t)$ is not an isometry and $\| r_z(w,t) - r_z(y,t)\| = \|w-y\|$;
		\item $r_z( R_z(\cdot), t)$ is not an isometry and $\| r_z(R_z(w),t) - r_z(R_z(y),t)\| = \|w-y\|$;
		\item $\| r_z(w,t) - r_z(R_z(y),t)\| = \|w-y\|$;
		\item $\| r_z(R_z(w),t) - r_z(y,t)\| = \|w-y\|$.
	\end{enumerate}
	As $(K_4,s)$ is strongly regular and $K_4$ is rigid in $X$ (Theorem \ref{thm:rigidne}),
	then $S$ must be a finite set.
	By the properties of our choice of $y$ and Lemma \ref{l:semirefl},
	we may choose $x \in \H$ such that for each $t \in S$,
	\begin{eqnarray*}
		\|r_z( x,t) - r_z(y,t)\| &\neq& \|x-y\|, \\
		\|r_z(R_z(x),t) - r_z(R_z(y),t)\| &\neq& \|x-y\|, \\
		\|r_z( x,t) - r_z(R_z(y),t)\| &\neq& \|x-y\|, \\
		\|r_z( R_z(x),t) - r_z(x,t)\| &\neq& \|x-y\|
	\end{eqnarray*}
	and the graph $K_4$ with placement $s':=(0,x,y,z)$ is a strongly regular framework.
	Define $p$ to be the placement of $G$ with
	\begin{align*}
		p_{v_1}:=y, \quad p_{v_2} := w, \quad p_{v_3} := 0 , \quad p_{v_4} :=z, \quad p_{v_5} := y, \quad p_{v_6}:=x.
	\end{align*}
	As $(K_4,s)$ and $(K_4,s')$ are both regular and $K_4$ is rigid in $X$,
	the framework $(H,p)$ is infinitesimally rigid.
	Let $q$ be a placement of $H$ where $(H,q)$ is equivalent to $(H,p)$ and $q_{v_3}=0$.
	Then there exists $g_1,g_2,g_5,g_6 \in \{ r_z( \cdot ,t), r_z(R_z(\cdot),t)\}$ for some $t \in \mathbb{T}$ so that
	\begin{align*}
		q_{v_1} = g_1(y), \quad q_{v_2}= g_2(w), \quad q_{v_3}= 0, \quad q_{v_4} = r_z(z,t), \quad q_{v_5} = g_5(y), \quad q_{v_6} = g_6(x).
	\end{align*}
	If $q$ is not congruent to $p$ then we must have $t \in S$,
	however by our choice of $x$ it follows that
	\begin{align*}
		\|g_6(x) - g_5(y)\| \neq \|x-y\|,
	\end{align*}
	a contradiction.
	Hence $q$ is congruent to $p$ and $(H,p)$ is globally rigid.
	As $(H,p)$ is globally and infinitesimally rigid then by Theorem \ref{thm:ave},
	$H$ is globally rigid in $X$.
\end{proof}

\subsection{Some classes of globally rigid graphs}

We first note that we can always add a vertex of degree at least 3 to a graph to preserve global rigidity in analytic normed planes.

\begin{lem}\label{l:0+ext}
	Let $(G,p)$ be a globally rigid framework in an analytic normed plane $X$ and let $G'$ be the graph formed from $G$ by adding a vertex $w$ and connecting it by three edges to the vertices $v_1,v_2,v_3$.
	Suppose $p_{v_1},p_{v_2},p_{v_3}$ are not colinear.
	If we define for each $x \in X$ the placement $p^x$ of $G'$ with $p^x_v := p_v$ for all $v \in V(G)$ and $p^x := x$,
	the set of values $x$ where $(G',p^x)$ is globally rigid is an open conull subset of $X$.
\end{lem}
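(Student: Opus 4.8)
The plan is to reduce the global rigidity of $(G',p^x)$ to the uniqueness of the position of the new vertex $w$, and then to establish that uniqueness for generic $x$ via the $z$-reflection and Proposition \ref{p:anvar}.

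First I would set up the reduction. Translate so that $p_{v_1}=0$ and write $z:=p_{v_2}$, $u:=p_{v_3}$; by hypothesis $0,z,u$ are not colinear, hence they affinely span $X$. Suppose $(G',q)\sim (G',p^x)$. Restricting to $G$ gives $(G,q|_{V(G)})\sim (G,p)$, so by global rigidity of $(G,p)$ there is an isometry $g$ with $q|_{V(G)}=g\circ p$; replacing $q$ by $g^{-1}\circ q$, we may assume $q|_{V(G)}=p$, and then $\|q_w-p_{v_i}\|=\|x-p_{v_i}\|$ for $i=1,2,3$. Since every isometry of $X$ is affine (Theorem \ref{t:mazurulam}) and an affine isometry fixing the affinely independent points $0,z,u$ must be the identity, $(G',q)$ is congruent to $(G',p^x)$ precisely when $q_w=x$. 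Thus $(G',p^x)$ is globally rigid if and only if $x$ is the unique point $y$ with $\|y\|=\|x\|$, $\|y-z\|=\|x-z\|$, and $\|y-u\|=\|x-u\|$.

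Next I would analyse this uniqueness. Let $\ell:=\spann\{z\}$ with bounding open half-planes $\H,\H'$, and suppose $u\in\H$. If $x\in\ell$ then Lemma \ref{l:2point}(ii) already forces $y=x$. If $x\notin\ell$ then by Lemma \ref{l:2point}(iii) the first two equations have exactly the solutions $x$ and $R_z(x)$, so uniqueness is equivalent to $\|R_z(x)-u\|\neq\|x-u\|$. On the connected open set $\H\setminus\{u\}$ define
\[ h(x):=\|R_z(x)-u\|^2-\|x-u\|^2, \]
which is analytic because $R_z$ is analytic off $\ell$ (Theorem \ref{t:genrefl}(\ref{l:generalreflection2})) and $R_z(x)\in\H'$ is never equal to $u$. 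The key point is that $h\not\equiv 0$: as $x\to u$ we have $R_z(x)\to R_z(u)\in\H'$ by continuity (Theorem \ref{t:genrefl}(\ref{l:generalreflection1}),(\ref{l:generalreflection3})), so $\|x-u\|\to 0$ while $\|R_z(x)-u\|\to \|R_z(u)-u\|>0$, whence $h>0$ near $u$. Proposition \ref{p:anvar} then makes $Z(h)$ a closed null subset of $\H$, and the symmetric argument on $\H'$ (replacing $u$ by $R_z(u)$) handles the other half-plane. As $\ell$ and the finitely many non-well-positioned placements are null, the set of $x$ for which the position of $w$ fails to be unique is null, so the globally rigid set is conull.

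It remains to address openness and to identify the main difficulty. For $x\notin\ell$ the criterion $\|R_z(x)-u\|\neq\|x-u\|$ is the non-vanishing of a continuous function, so the globally rigid locus is open away from $\ell$; the degenerate placements with $w\in\ell$ I would treat via Theorem \ref{thm:ave}. Indeed, the global rigidity established above implies local rigidity, so on the conull subset where $p^x$ is also regular (genericity handled as in Propositions \ref{prop:regcreg} and \ref{prop:znice}) Theorem \ref{t:asimowroth} upgrades this to infinitesimal rigidity; the averaging theorem then supplies, for each such $x$, an open neighbourhood in $X^{V(G')}$ of globally and infinitesimally rigid frameworks, and pulling back along the slice $x'\mapsto p^{x'}$ gives an open neighbourhood of $x$ inside the globally rigid set. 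I expect the main obstacle to be the verification that $h\not\equiv 0$ — that the third distance genuinely separates $x$ from its reflection $R_z(x)$ — since this is exactly where the non-colinearity of $p_{v_1},p_{v_2},p_{v_3}$ is indispensable, while the degenerate line placements require the separate averaging argument rather than the direct continuity one.
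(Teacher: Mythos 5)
Your reduction and your null-set argument are essentially the paper's own proof: the paper likewise normalises $q|_{V(G)}=p$ using the global rigidity of $(G,p)$, identifies the two candidate positions $x$ and $R_z(x)$ for $q_w$ via Lemma \ref{l:2point}, and eliminates the reflected candidate by showing that the analytic function comparing $\|R_z(x)-u\|$ with $\|x-u\|$ is not identically zero (this is packaged as Lemma \ref{l:semirefl} with $t=0$, whose proof is essentially your evaluation near the third vertex, where the direct distance vanishes but the reflected one does not). So the substantive content of your proposal — that the globally rigid locus contains an open conull set — is correct and matches the paper.

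The gap is in your treatment of the points $x_0\in\ell=\spann\{z\}$. Theorem \ref{thm:ave} requires $(G',p^{x_0})$ to be infinitesimally rigid, and your route to this is Theorem \ref{t:asimowroth} applied on ``the conull subset where $p^x$ is regular''; but the placement of $V(G)$ is frozen at $p$, and the hypotheses only assert that $(G,p)$ is globally rigid, not that it is regular or infinitesimally rigid. A globally rigid framework need not be regular (already in the Euclidean plane a collinear $K_3$ with side lengths $a,b,a+b$ is globally rigid but has deficient rank), in which case no $p^x$ in your two-dimensional slice is a regular placement of $G'$ and the upgrade from local to infinitesimal rigidity is unavailable. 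The paper sidesteps the issue entirely: it covers a conull subset of $X$ by the three open half-planes $\H_1,\H_2,\H_3$ of the triangle $p_{v_1},p_{v_2},p_{v_3}$ (each bounded by one side and containing the opposite vertex), chooses the reflection axis according to which $\H_i$ contains $x$ — so $x$ never lies on the chosen axis — and concludes only that the globally rigid locus contains the resulting open conull set, which is all that is used in the later applications. Your set $\{h\neq0\}\cup\{h'\neq0\}$ already achieves the same thing, so the fix is simply to drop the averaging step (or to swap axes as the paper does) rather than to attempt openness at $\ell$.
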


\begin{proof}
	For every distinct triple $i,j,k \in \{1,2,3\}$,
	define $\H_i$ to be the open half-plane containing $p_{v_i}$ formed by the line through $p_{v_j}$ and $p_{v_k}$.
	It is immediate that $\H_1 \cup \H_2 \cup \H_3$ is an open conull subset of $X$.
	It follows that we need only show that for each $i \in \{1,2,3\}$,
	there exists an open conull set of points $x \in \H_i$ where $(G',p^x)$ is globally rigid.
	
	Fix a distinct triple $i,j,k \in \{1,2,3\}$.
	By applying translations,
	we may assume that $p_{v_j}=0$.
	By setting $\H = \H_i$, $z=p_{v_k}$ and $y=p_{v_i}$,
	let $\H'$ be the open conull subset of points as described in Lemma \ref{l:semirefl} for $t=0$.
	Choose any $x \in \H'$ and let $(G',q)$ be an equivalent framework to $(G',p^x)$.
	Since $(G,p)$ is globally rigid we may assume by applying isometries that $q_v=p_v$ for all $v \neq w$.
	By Lemma \ref{l:2point},
	either $q_w =x$ or $q_w =R_{z}(x)$.
	As $\|R_{z}(x) - y\| \neq \|x-y\|$,
	we must have $q_w=x$,
	hence $(G',p^x)$ is globally rigid.
\end{proof}

With this we can now prove that almost all sufficiently large complete frameworks in analytic normed planes will be globally rigid.

\begin{thm}\label{t:complete}
	Let $X$ be an analytic normed plane and $n \geq 5$ a positive integer.
	Then there exists an open dense set of placements $p \in X^n$ where the framework $(K_n,p)$ is globally rigid in $X$.
\end{thm}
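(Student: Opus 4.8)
The plan is to build up the complete graph $K_n$ from the globally rigid graph $K_5^-$ by repeatedly adding vertices of degree $3$, using Lemma \ref{l:0+ext} as the inductive step, and then to promote the resulting family of degree-$3$ extensions all the way up to the full edge set of $K_n$. First I would invoke Theorem \ref{thm:k5-e} to obtain that $K_5^- = K_5 - e$ is globally rigid in $X$; by definition this means $\grig(K_5^-;X)$ has non-empty interior, so there is an open set of placements $p$ on five vertices with $(K_5^-,p)$ globally rigid. Restricting to the open conull set of completely strongly regular placements (Proposition \ref{p:comreg}) and to placements in general position, I can assume my starting placement is as generic as I like.

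The core of the argument is an induction on $n$. Suppose I have an open dense set of placements of some graph $\tilde G$ on $n$ vertices that are globally rigid, where $\tilde G$ is a spanning subgraph of $K_n$ obtained from $K_5^-$ by successively adding vertices of degree $3$ to non-colinear triples. To pass to $n+1$, I add a new vertex $w$ joined to three existing vertices whose images are not colinear (generic placements automatically have non-colinear triples since the placement is in general position). Lemma \ref{l:0+ext} then guarantees that for an open conull set of positions $x$ of the new vertex, the extended framework remains globally rigid. Intersecting with the open dense set from the inductive hypothesis and with the completely strongly regular set keeps the resulting set open and dense. This produces, for each $n \geq 5$, an open dense set of globally rigid placements of a graph $G_n$ that is a $3$-lateration-type spanning subgraph of $K_n$.

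The remaining step is to upgrade from the sparse graph $G_n$ to the complete graph $K_n$. Here I would use the monotonicity of global rigidity under edge addition: if $(G_n,p)$ is globally rigid and $G_n \subseteq K_n$ on the same vertex set, then any framework $(K_n,q)$ equivalent to $(K_n,p)$ restricts to a framework $(G_n,q)$ equivalent to $(G_n,p)$, forcing $q \sim p$; hence $(K_n,p)$ is globally rigid as well. Thus the open dense set of globally rigid placements of $G_n$ is contained in the set of globally rigid placements of $K_n$, and intersecting with the completely strongly regular set of $K_n$ (to ensure infinitesimal rigidity via Theorem \ref{thm:rigidne}, as $K_n$ for $n\geq 5$ certainly contains a $(2,2)$-tight spanning subgraph) gives the desired open dense set.

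The main obstacle I anticipate is bookkeeping at the base case and the colinearity hypothesis of Lemma \ref{l:0+ext}: I must ensure that at every stage of the inductive construction the three attachment vertices have non-colinear images, and that the successive intersections of open dense sets (the inductive set, the completely strongly regular set, and the conull set supplied by the lemma) remain dense. Since each individual set is open and either dense or conull, finite intersections are open and dense, so the topological part is routine; the only genuine care is choosing the attachment triples so that general position is preserved, which is automatic for placements drawn from the completely strongly regular (hence general position) set.
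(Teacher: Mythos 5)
Your overall skeleton (base case $n=5$, inductive degree-$3$ extensions via Lemma \ref{l:0+ext}, monotonicity of global rigidity under edge addition) matches the paper's, but there is a genuine gap at the base case that defeats the \emph{density} claim. You start from Theorem \ref{thm:k5-e}, which only says that $K_5^-$ is globally rigid as a graph, i.e.\ that $\grig(K_5^-;X)$ has non-empty interior. That gives you a single non-empty open set of placements somewhere in $X^5$, and intersecting it with conull or comeagre sets (completely strongly regular placements, general position) cannot enlarge it to a dense subset of $X^5$. Your induction then propagates this deficiency: Lemma \ref{l:0+ext} gives an open conull set of positions for each \emph{new} vertex, but the first five coordinates remain confined to a non-dense open set, so the final set of globally rigid placements of $K_n$ you produce is open and non-empty but not dense in $X^n$. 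In effect your argument re-proves that $K_n$ is globally rigid in $X$ (which the paper notes is already a consequence of Theorem \ref{thm:3lat}), not Theorem \ref{t:complete}.

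The missing ingredient is the stronger, framework-level Lemma \ref{lem:k5-e}: the set $Q$ of placements of $K_5 - v_4v_5$ with $p_{v_1}=0$, $p_{v_2}\neq 0$ and $p_{v_3},p_{v_4},p_{v_5}$ on one side of the line through $p_{v_1},p_{v_2}$ contains an \emph{open dense} subset $Q'$ of globally and infinitesimally rigid placements. The paper combines this with the observation that every placement of five points can, after relabelling the vertices (equivalently, after choosing which edge of $K_5$ to delete and which pair plays the role of $v_1,v_2$), be translated into $Q$; since $K_5$ contains every $K_5-e$, density of globally rigid placements of $K_5$ in all of $X^5$ follows, and openness comes from Theorem \ref{thm:ave}. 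If you replace your appeal to Theorem \ref{thm:k5-e} by this use of Lemma \ref{lem:k5-e} plus the relabelling argument, the rest of your plan goes through.
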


\begin{proof}
	It follows from Lemma \ref{l:0+ext} that it suffices to prove the result for $n=5$.
	Let $K_n$ be the complete graph on the vertex set $\{v_1,v_2,v_3,v_4,v_5\}$.
	By reordering the vertices of $K_5$ if necessary,
	any framework $(K_5,p)$ will contain a copy of $(K_5-v_4v_5,p)$ where $p_{v_3},p_{v_4},p_{v_5}$ lie on the same side of the line through $p_{v_1}$ and $p_{v_2}$.
	Hence the result now follows by applying Lemma \ref{lem:k5-e}.
\end{proof}

Recall that $G$ is a \emph{$k$-lateration graph} if it is obtained from $K_k$ by sequentially adding vertices of degree $k$ (we will make no assumption on the adjacencies of the neighbours of the new vertices).

\begin{thm}
\label{thm:3lat}
    Let $G$ be a 3-lateration graph on at least 5 vertices.
    Then $G$ is globally rigid in any analytic normed plane.
\end{thm}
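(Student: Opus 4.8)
The plan is to argue by induction on the number of vertices $n=|V(G)|$, using $K_5^-$ as the base case and combining Lemma~\ref{l:0+ext} with Corollary~\ref{cor:ave} for the inductive step. For the base case $n=5$ I would first observe that every $3$-lateration graph on five vertices is, up to isomorphism, exactly $K_5^-$: starting from $K_3$, the first degree-$3$ vertex is forced to join all three existing vertices, producing $K_4$, and the second degree-$3$ vertex joins three of the four vertices of $K_4$, producing $K_4$ with an added degree-$3$ vertex, i.e.\ $K_5$ with one edge removed. Global rigidity of the base case is then precisely Theorem~\ref{thm:k5-e}.

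For the inductive step, suppose $n\ge 6$ and write $G$ as a $3$-lateration graph $G_0$ on $n-1$ vertices together with an added vertex $w$ of degree three joined to vertices $v_1,v_2,v_3$; by the inductive hypothesis $G_0$ is globally rigid in $X$. I would first produce a good framework of $G_0$. Since $\grig(G_0;X)$ has non-empty interior and, by Proposition~\ref{prop:regcreg}, $\reg(G_0;X)$ is open and conull, while the set of placements with $p_{v_1},p_{v_2},p_{v_3}$ affinely colinear is the zero set of a non-vanishing analytic determinant and hence a closed null set by Proposition~\ref{p:anvar}, the intersection of these sets with the interior of $\grig(G_0;X)$ is a non-empty open set. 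Choosing $p$ there yields a regular, globally rigid framework $(G_0,p)$ with $p_{v_1},p_{v_2},p_{v_3}$ non-colinear, and Theorem~\ref{t:asimowroth} upgrades it to infinitesimal rigidity (globally rigid implies locally rigid).

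Next I would apply Lemma~\ref{l:0+ext} to $(G_0,p)$ to obtain an open conull set of points $x\in X$ for which $(G,p^x)$ is globally rigid. The key additional point is that for any such well-positioned $x$ the framework $(G,p^x)$ is also infinitesimally rigid: any infinitesimal flex restricts on $V(G_0)$ to an element of $\ker df_{G_0}(p)=\mathcal{T}(p)$, which for a non-Euclidean plane is a translation, so after subtracting this translation the three edge constraints at $w$ read $\varphi_{x-p_{v_i}}(u_w)=0$ for $i=1,2,3$; since $v_1,v_2,v_3$ are non-colinear at least two of the vectors $x-p_{v_i}$ are non-parallel, and the smoothness and strict convexity of $X$ make the corresponding support functionals linearly independent, forcing $u_w=0$. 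Thus $(G,p^x)$ is both globally and infinitesimally rigid, and Corollary~\ref{cor:ave} (applicable because a non-Euclidean analytic normed plane has only finitely many linear isometries) immediately gives that $G$ is globally rigid in $X$, completing the induction.

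The main obstacle I expect is not the rigid-body or analytic machinery, which is already packaged inside Lemma~\ref{l:0+ext} and Theorem~\ref{thm:k5-e}, but rather the passage from the single-slice conclusion of Lemma~\ref{l:0+ext} (global rigidity only as $x$ varies, with $p$ fixed) to the genuinely open family of globally rigid placements demanded by the definition of a globally rigid graph. This is exactly what forces the need for an infinitesimally rigid witness so that Theorem~\ref{thm:ave}/Corollary~\ref{cor:ave} can fatten the slice into an open neighbourhood; consequently the one genuinely new verification is that a degree-$3$ extension over a non-colinear triple preserves infinitesimal rigidity, which is where strict convexity and non-colinearity are essential.
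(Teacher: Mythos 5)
Your proof is correct and follows essentially the same route as the paper: reduce to the base case $K_5^-$ (Theorem \ref{thm:k5-e}) and induct by repeated application of Lemma \ref{l:0+ext}. The one thing you add is the explicit verification that a degree-$3$ extension over a non-colinear triple preserves infinitesimal rigidity, so that Corollary \ref{cor:ave} upgrades the single globally rigid framework to global rigidity of the graph; the paper leaves this step implicit, and your treatment of it (trivial flexes are translations since $\Isolin(X)$ is finite, and two non-parallel edge directions at $w$ give independent support functionals by strict convexity) is sound.
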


\begin{proof}
    Every 3-lateration graph on at least 5 vertices can be formed from $K_5-e$ by a sequence of degree 3 vertex additions.
    The result now follows from Theorem \ref{thm:k5-e} and repeated application of Lemma \ref{l:0+ext}.
\end{proof}

Since edge addition preserves global rigidity it is clear that every graph containing a spanning 3-lateration subgraph is globally rigid in $X$. This immediately tells us that all complete graphs $K_n$, $n\geq 5$, are globally rigid in any analytic normed plane. Note that this is a weaker statement than we obtained in Theorem \ref{t:complete}.

We conjecture that if $n\geq 2d+1$ then $K_n$ is globally rigid in any $d$-dimensional analytic normed space $X$ and that $(d+1)$-lateration graphs on at least $2d+1$ vertices are globally rigid in $X$.

We can also apply the proof technique of Theorem \ref{thm:3lat} to $H$ to find another infinite family of globally rigid graphs. On the other hand, many graphs are not covered by these families. For example, these results do not confirm global rigidity for any bipartite graph, while it would be natural to expect that the complete bipartite graph $K_{m,n}$ is globally rigid whenever $m,n\geq 3$ and $m+n\geq 8$.

\section{Proof of Theorems \ref{l:sisom2} and \ref{l:srefl}}
\label{sec:tech}

\subsection{Set up and notation}

In this section we will prove Lemma \ref{l:sisom2}. The methods will then be modified to prove Lemma \ref{l:srefl}.
Throughout Section \ref{sec:tech} we fix the following: 
\begin{enumerate}[(i)]
	\item $X$ is an analytic normed plane.
	\item $z \in X$ is normal.
	\item $\H$ is an open half plane formed by the line through $0$ and $z$.
	\item $\H' \subset \H$ is the subset of $z$-normal points.
\end{enumerate}
Define for each $y \in \H$ the set $S_y$ of points $t \in \mathbb{T}$ where for all $x \in \H$,
\begin{align*}
	\| r_z (x,t) - r_z(y,t) \| = \| x-y\|,
\end{align*}
and we define for each $t \in \mathbb{T}$ the set 
\begin{align*}
	A_t := \{ y \in \H : t \in S_y \}.
\end{align*}
In the following sections we prove that, for almost all points $y$,
the set $S_y$ only contains values $t$ where $r_z(\cdot,t)$ is an isometry,
which in turn will prove Lemma \ref{l:srefl}.
The proof is structured as follows:
\begin{enumerate}[(i)]
    \item We first prove that if the set $A_t$ contains a smooth curve, then $r_z(\cdot,t)$ is an isometry (Lemma \ref{l:pathint2}).
    \item Next, we show that there exists an open dense set $\H'' \subset \H$ of points $y$ where perturbing $y$ will also just perturb the points in the set $S_y$, while keeping the number of points in $S_y$ constant.
    \item Finally we will prove there exists a countable set of points $W \subset \mathbb{T}$,
    where for each $t \in \mathbb{T} \setminus W$ we have either $A_t = \H$ and $r_z(\cdot,t)$ is an isometry, or $A_t = \emptyset$.
    We shall also prove that $\bigcup_{t \in W} A_t \cap \H''$ is a countable set.
\end{enumerate}

\subsection{Basic properties of \texorpdfstring{$S_y$}{Sy} }
We first consider the sets $S_y$.

\begin{lem}\label{l:nots}
	For all $y \in \H$ and any $t \in \mathbb{T} \setminus S_y$, 
	there exists an open conull subset of $\H$ of points $x$ where
	\begin{align*}
		\| r_z (x,t) - r_z(y,t) \| \neq \| x-y\|.
	\end{align*}
\end{lem}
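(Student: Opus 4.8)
The plan is to mimic the proof of Lemma \ref{l:semirefl}: construct an analytic function of $x$ whose zero set is exactly the set of points where equality holds, and then invoke Proposition \ref{p:anvar} to conclude that this zero set is closed and null.

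First I would fix $y \in \H$ and $t \in \mathbb{T} \setminus S_y$, and define
\[
	f : \H \setminus \{y\} \rightarrow \mathbb{R}, \qquad f(x) := \| r_z(x,t) - r_z(y,t) \| - \| x - y\|.
\]
The point $y$ is deliberately removed from the domain, since neither $\|x-y\|$ nor the first term is analytic where its argument vanishes; this is the one place requiring care. Because $\H$ avoids the line $\spann\{z\}$, Theorem \ref{t:genrot}(\ref{l:genrot2}) guarantees that $r_z(\cdot,t)$ is analytic on $\H$. Moreover $r_z(\cdot,t)$ is injective by Theorem \ref{t:genrot}(\ref{l:genrot1.5}), so $r_z(x,t) - r_z(y,t) \neq 0$ whenever $x \neq y$; together with $x - y \neq 0$ on $\H \setminus \{y\}$ and the analyticity of the norm away from the origin, this shows $f$ is analytic on the open connected set $\H \setminus \{y\}$ (removing a single point from the convex open set $\H$ leaves it connected).

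Next I would use the hypothesis $t \notin S_y$: by the definition of $S_y$ there is some $x_0 \in \H$ with $\| r_z(x_0,t) - r_z(y,t) \| \neq \| x_0 - y\|$, and necessarily $x_0 \neq y$ since equality holds trivially at $y$. Hence $f$ is a non-zero analytic function on $\H \setminus \{y\}$, and Proposition \ref{p:anvar} shows that $Z(f)$ is a closed null subset of $\H \setminus \{y\}$. The sought set of points is then precisely
\[
	\H \setminus \big( \{y\} \cup Z(f) \big),
\]
which is open in $\H$ and conull, since its complement $\{y\} \cup Z(f)$ is a null set.

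The argument is essentially routine once the domain is chosen correctly; the only genuine subtlety — the hard part — is the failure of the two norm terms to be analytic at $x = y$, which is why I excise $y$ and note that $y$ correctly lies outside the claimed set (equality always holds at $x = y$). Everything else is a direct transcription of the analytic-zero-set technique already used in Lemma \ref{l:semirefl}.
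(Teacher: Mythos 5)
Your proposal is correct and is essentially identical to the paper's own proof: both define $f(x) := \| r_z(x,t) - r_z(y,t)\| - \|x-y\|$ on the open connected set $\H \setminus \{y\}$, use $t \notin S_y$ to see $f$ is not identically zero, and apply Proposition \ref{p:anvar}. Your additional remarks on why $f$ is analytic (injectivity of $r_z(\cdot,t)$ keeping the arguments of the norms away from $0$) and on excising the point $y$ only make explicit details the paper leaves implicit.
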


\begin{proof}
	If we define the map
	\begin{align*}
		f : \H \setminus \{y\} \rightarrow \mathbb{R}, ~ x \mapsto \| r_z (x,t) - r_z(y,t) \| - \| x-y\|,
	\end{align*}
	then $f$ is an analytic function with an open connected domain.
	By Proposition \ref{p:anvar},
	the zero set of $f$ is either a closed null set or $\H \setminus \{y\}$.
	If the latter holds then $t \in S_y$,
	a contradiction,
	thus the former must hold as required.
\end{proof}

\begin{lem}\label{l:sfin}
	For any $y \in \H$,
	if $y$ is $z$-normal then $S_y$ is finite.
\end{lem}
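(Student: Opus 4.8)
The plan is to inject $S_y$ into the fibre of the rigidity map of a suitable copy of $K_4$ and to show this fibre is finite. The three points $0,z,y$ are non-collinear but a triangle on them is flexible (as $K_3$ is not $(2,2)$-tight), so I must rigidify with a fourth point $w$, and that point must lie in $\H$ in order for its distance to $y$ to be controlled by membership in $S_y$. Using the $z$-normality of $y$ together with the conullity of the strongly regular placements (Proposition \ref{p:comreg}) and of the $z$-normal points (Proposition \ref{prop:znice}), the set of fourth points $w\in X$ for which $(0,z,y,w)$ is a strongly regular placement of $K_4$ is conull; intersecting with the open half-plane $\H$, I may fix such a $w\in\H$ and set $p:=(0,z,y,w)$.

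For each $t\in S_y$ I would form the rotated placement $q^t:=(0,\,r_z(z,t),\,r_z(y,t),\,r_z(w,t))$ of $K_4$ and check that $(K_4,q^t)$ is equivalent to $(K_4,p)$. The three edge lengths among $0,z,y$ and the two edge lengths from $w$ to $0$ and to $z$ are preserved by the defining identities $\|r_z(\cdot,t)\|=\|\cdot\|$ and $\|r_z(\cdot,t)-r_z(z,t)\|=\|\cdot-z\|$ of Theorem \ref{t:genrot}(\ref{l:genrot3}); the single remaining edge length, between $y$ and $w$, is preserved precisely because $w\in\H$ and $t\in S_y$. Hence every $q^t$ lies in
\begin{align*}
	S := \{\, q : f_{K_4}(q)=f_{K_4}(p),\ q_{v_0}=0 \,\},
\end{align*}
where $q$ ranges over placements of $K_4$.

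Next I would show $S$ is finite. The graph $K_4$ is $(2,2)$-tight, hence minimally rigid in $X$ (Theorem \ref{thm:rigidne}), and since $X$ is non-Euclidean we have $\dim\mathcal{T}(q)=2$; thus each regular $q\in S$ is infinitesimally rigid with $\ker df_{K_4}(q)=\mathcal{T}(q)$ the two-dimensional space of translational flexes, so the restriction of $f_{K_4}$ to the slice $\{q_{v_0}=0\}$ has injective derivative there. Strong regularity of $p$ forces every point of $S$ to be regular, so by the inverse function theorem each point of $S$ is isolated; as $K_4$ is connected, $f_{K_4}$ restricted to $\{q_{v_0}=0\}$ is proper, and therefore $S$, being the preimage of a single point, is compact. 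A compact discrete set is finite.

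Finally, the map $S_y\to S$, $t\mapsto q^t$, is injective: if $q^{t_1}=q^{t_2}$ then $r_z(z,t_1)=r_z(z,t_2)$, and $r_z(z,\cdot)$ is a homeomorphism onto $S_{\|z\|}[0]$ by Theorem \ref{t:genrot}(\ref{l:genrot1.6}), so $t_1=t_2$. Consequently $|S_y|\le|S|<\infty$. The step I expect to be the main obstacle is the very first one, namely guaranteeing that the rigidifying fourth point $w$ can be chosen inside $\H$ while $(0,z,y,w)$ remains strongly regular. This is exactly where the $z$-normality hypothesis is indispensable, and turning ``there exists one admissible fourth point somewhere in $X$'' into ``the admissible fourth points are conull, hence meet the open set $\H$'' is the delicate part of the argument.
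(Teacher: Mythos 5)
Your overall mechanism is the same one the paper uses: a rotation through any $t\in S_y$ preserves all six edge lengths of a copy of $K_4$ placed at $0,z,y,w$ with $w\in\H$, so $S_y$ injects into the fibre of $f_{K_4}$ over a strongly regular value, and that fibre is finite because $K_4$ is rigid. The difference is one of direction. The paper runs the argument contrapositively: if $S_y$ were infinite, then \emph{every} placement $(0,z,y,x)$ with $x\in\H$ would have an infinite configuration space modulo isometries and hence fail to be strongly regular, so $y$ could not be $z$-normal. That version quantifies over all fourth points at once and never has to exhibit a particular one. You run the argument forwards, which obliges you to actually produce a strongly regular fourth point inside $\H$ --- and that is where your proof, as written, has a gap. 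Your justification is that the set of admissible $w$ is conull because $\sreg(K_4;X)$ is conull in $X^V$ (Proposition \ref{p:comreg}) and the $z$-normal points are conull (Proposition \ref{prop:znice}). Neither statement controls the two-dimensional slice $\{(0,z,y,w):w\in X\}$: a conull subset of $X^V$ can meet a fixed measure-zero slice in a null set, or miss it entirely, and Fubini only tells you that \emph{almost every} slice is conull, not the one through your particular $(0,z,y)$; likewise Proposition \ref{prop:znice}(ii) concerns varying $y$, not $w$. The existence of one witness $w_0$ from Definition \ref{defn:nice} together with the analytic-minor argument behind Proposition \ref{p:anvar} does give that \emph{complete regularity} of $(0,z,y,w)$ holds for a conull set of $w$, but upgrading this to \emph{strong} regularity along the slice requires a separate Sard-type argument (it is essentially the unproved slice statement the paper itself invokes in Lemma \ref{l:sinfgood}) and is not a formal consequence of the two propositions you cite. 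To your credit, you flag exactly this step as the delicate one.

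The remainder of your argument is sound and in fact more detailed than the paper's: where the paper simply asserts that a strongly regular framework on a rigid graph has finitely many equivalence classes, you prove it (regularity plus the pin $q_{v_0}=0$ kills the translational kernel, so the inverse function theorem isolates each point of the fibre; properness of the pinned $f_{K_4}$ gives compactness; a compact discrete set is finite), and the injectivity of $t\mapsto q^t$ via Theorem \ref{t:genrot} is correct. If you either adopt the paper's contrapositive phrasing --- which needs no specific $w$ --- or supply an honest proof of the slice-conullity claim, your proof is complete.
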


\begin{proof}
	Suppose $S_y$ is not finite.
	Define for each $x \in \H$ the placement $p^x$ of $K_4$,
	with $p^x_{v_1}=0$,
	$p^x_{v_2}=z$,
	$p^x_{v_3} = y$,
	$p^x_{v_4}=x$.
	As $S_y$ is not finite then for each $x \in \H$ the configuration space of $(K_4,p^x)$ modulo isometries will also be infinite.
	Since $K_4$ is rigid in $X$ (by Theorem \ref{thm:rigidne}) any strongly regular framework $(K_4,p)$ will have a finite configuration space modulo isometries.
	Hence,
	$p^x$ is not strongly regular for any choice of $x \in \H$ and $y$ is not $z$-normal.
\end{proof}

\subsection{Structural properties of $A_t$}\label{sec:at}

In this section we shall prove that if $A_t$ contains a smooth curve then $r_z(\cdot,t)$ is an isometry (see Lemma \ref{l:apath}).
We first note the following.

\begin{lem}\label{l:ainterior}
	Let $t \in \mathbb{T}$.
	If the interior of $A_t$ is non-empty then $r_z( \cdot, t)$ is an isometry.
\end{lem}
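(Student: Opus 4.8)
The plan is to reduce the statement directly to Lemma~\ref{l:theyareisom}, which already establishes that a map of the form $r_z(\cdot,t)$ (or $r_z(R_z(\cdot),t)$) that preserves distances on some open set must be a linear isometry. All the analytic continuation work is packaged into that lemma, so the only task here is to extract a suitable open set from the hypothesis that $A_t$ has non-empty interior.

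First I would simply unwind the definitions. By construction, a point $y \in \H$ lies in $A_t$ exactly when $t \in S_y$, which by the definition of $S_y$ means that $\|r_z(x,t) - r_z(y,t)\| = \|x-y\|$ for \emph{every} $x \in \H$. Assuming the interior of $A_t$ is non-empty, I would fix a non-empty open set $U \subseteq A_t$, and note that $U \subseteq \H$ since $A_t \subseteq \H$.

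The key observation is that $U$ witnesses distance preservation for the map $g := r_z(\cdot,t)$. Indeed, take any $x,y \in U$. Since $y \in U \subseteq A_t$ we have $t \in S_y$, and since $x \in U \subseteq \H$ the defining relation of $S_y$ applies to this particular $x$; hence $\|g(x) - g(y)\| = \|x-y\|$. As $x,y \in U$ were arbitrary, $g$ preserves distances on the open set $U$.

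Finally, I would apply Lemma~\ref{l:theyareisom} with $g = r_z(\cdot,t) \in \{r_z(\cdot,t),\, r_z(R_z(\cdot),t)\}$ and this open set $U$, concluding that $g$ is a linear isometry, and in particular that $r_z(\cdot,t)$ is an isometry. I do not anticipate a genuine obstacle: the substantive content lives in Lemma~\ref{l:theyareisom}, and the only point requiring care is matching the quantifier conventions, namely checking that the ``for all $x \in \H$'' built into the definition of $S_y$ supplies precisely the pairs $(x,y) \in U \times U$ needed to feed into the lemma.
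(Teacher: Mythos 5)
Your proof is correct and is essentially identical to the paper's own argument: both take a non-empty open subset $U$ of $A_t$, observe that the definition of $S_y$ gives $\|r_z(x,t)-r_z(y,t)\|=\|x-y\|$ for all $x,y\in U$, and then invoke Lemma~\ref{l:theyareisom}. Your extra care in unwinding the quantifier in the definition of $S_y$ is a fair (and harmless) elaboration of the same step.
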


\begin{proof}
	Let $U$ be an open connected set in $A_t$.
	For all $x,y \in U$ we have
	\begin{align*}
		\| r_z(x,t) - r_z(y,t)\| = \| x - y\|.
	\end{align*}
	Hence by Lemma \ref{l:theyareisom},
	$r_z( \cdot, t)$ is an isometry.
\end{proof}

\begin{lem}\label{l:distline}
	Let $a,b,c \in X$ with $\varphi_c(b) =0$ and $c \neq 0$,
	and define $\ell :=\{ a + tb : t \in \mathbb{R}\}$.
	Then for every $x \in X$ there exists a unique pair $x_b,x_c \in \mathbb{R}$ so that $x = a + x_b b + x_c c$ and $a+ x_b$ is the closest point in $\ell$ to $x$.
\end{lem}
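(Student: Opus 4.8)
The plan is to recognise the hypothesis $\varphi_c(b)=0$ as exactly the orthogonality condition that makes $a+x_b b$ the foot of the ``perpendicular'' from $x$ to $\ell$, and to obtain the pair $(x_b,x_c)$ from a basis decomposition, so that existence and uniqueness of the pair are pure linear algebra and only the closest-point property requires an argument.

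First I would show $\{b,c\}$ is a basis of $X$. Since $c\neq 0$ and $X$ is smooth, the support functional $\varphi_c$ exists and satisfies $\varphi_c(c)=\|c\|^2>0$; if $b=\lambda c$ then $0=\varphi_c(b)=\lambda\|c\|^2$ forces $\lambda=0$, so either $b=0$ (in which case $\ell$ is a single point and a decomposition of a general $x$ is impossible, so this case is excluded) or $b,c$ are linearly independent. As $\dim X=2$, $\{b,c\}$ spans $X$, and hence every $x\in X$ has a unique expansion $x=a+x_b b+x_c c$. This settles existence and uniqueness of the pair $(x_b,x_c)$ outright; the remaining task is purely to verify that this pair enjoys the closest-point property.

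Next I would verify that $t=x_b$ minimises $D(t):=\tfrac12\|x-(a+tb)\|^2=\tfrac12\|(x_b-t)b+x_c c\|^2$. This $D$ is convex, being $\tfrac12\|\cdot\|^2$ precomposed with an affine map. The case $x_c=0$ is immediate, since then $x=a+x_b b\in\ell$. For $x_c\neq 0$ the argument $x_c c$ is a non-zero, hence smooth, point, so $D$ is differentiable at $x_b$; since $\varphi_\bullet$ is the derivative of $\tfrac12\|\cdot\|^2$ and scaling gives $\varphi_{x_c c}=x_c\varphi_c$, I compute
\begin{align*}
D'(x_b)=-\varphi_{x_c c}(b)=-x_c\,\varphi_c(b)=0.
\end{align*}
A convex function differentiable at a critical point attains its global minimum there, so $a+x_b b$ is a closest point of $\ell$ to $x$; its uniqueness (validating the phrase ``the closest point'') follows from the strict convexity of $X$ (Lemma \ref{prop:strconv}).

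I expect no genuine obstacle here. The only delicate bookkeeping is the scaling identity $\varphi_{x_c c}=x_c\varphi_c$, valid for all real scalars including negative ones by uniqueness of support functionals, which is what converts the hypothesis $\varphi_c(b)=0$ into the vanishing of $D'(x_b)$; together with the minor point that this derivative computation is only legitimate away from the origin, which is precisely why the degenerate case $x_c=0$ must be peeled off first.
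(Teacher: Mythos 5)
Your proof is correct and follows essentially the same route as the paper: both arguments decompose $x=a+x_bb+x_cc$ in the basis $\{b,c\}$ and differentiate $t\mapsto\tfrac12\|a+tb-x\|^2$, using $\varphi_{x_cc}(b)=x_c\varphi_c(b)=0$ to identify $t=x_b$ as a critical point. The only (harmless) divergence is in the last step: you conclude global minimality from convexity of the squared distance and get uniqueness of the closest point from strict convexity of the norm, whereas the paper shows the derivative is non-zero for $t\neq x_b$ (again via strict convexity) and invokes coercivity of $f$.
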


\begin{proof}
	If $x \in \ell$ then the result clearly holds.
	Suppose $x \notin \ell$ and define the analytic function
	\begin{align*}
		f: \mathbb{R} \rightarrow \mathbb{R}_{\geq 0}, ~ t \mapsto \frac{1}{2}\| a + tb - x\|^2.
	\end{align*}
	Then, for any $t \in \mathbb{R}$, we have $f'(t) = \varphi_{a+tb-x}(b)$.
	As $x \notin \ell$,
	there exists a unique point $x_b \in \mathbb{R}$ such that $x- (a+x_b b) = x_c c$ for some $x_c \in \mathbb{R}$.
	Since $X$ is strictly convex (Proposition \ref{prop:strconv}) and $\varphi_c(b) =0$ then $f'(x_b)=0$, and $f'(t) \neq 0$ if $t \neq x_b$.
	We have $f(t) \rightarrow \infty$ as $t \rightarrow \pm \infty$,
	hence $x_b$ is the unique minimum of $f$ as required.
\end{proof}

For the following, the line segment $\{ ta+(1-t)b : t \in [0,1]\}$ is denoted by $[a,b]$, and the open line segment $\{ ta+(1-t)b : t \in (0,1)\}$ is denoted by $(a,b)$.

\begin{lem}\label{l:aline}
	Let $t \in \mathbb{T}$.
	If $A_t$ contains a line segment $[a,b]$ with $a \neq b$,
	then $r_z(\cdot,t)$ is an isometry.
\end{lem}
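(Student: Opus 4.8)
The plan is to prove that $g:=r_z(\cdot,t)$ agrees with an affine isometry on an open subset of $\H$; Lemma~\ref{l:theyareisom} then immediately gives that $g$ is a linear isometry. As a first step I would check that $g$ maps $[a,b]$ isometrically onto the segment $[g(a),g(b)]$. Indeed every $c\in(a,b)$ lies in $A_t$, so $g$ preserves the three pairwise distances among $a,b,c$; since these points are collinear with $c$ between $a$ and $b$, the triangle inequality $\|g(a)-g(b)\|=\|g(a)-g(c)\|+\|g(c)-g(b)\|$ holds with equality, and strict convexity (Lemma~\ref{prop:strconv}) forces $g(c)\in[g(a),g(b)]$. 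Writing $\beta:=b-a$ and $\beta':=g(b)-g(a)$ (so $\|\beta\|=\|\beta'\|$), continuity and injectivity of $g$ yield the parametrisation $g(a+s\beta)=g(a)+s\beta'$ for $s\in[0,1]$.

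Next I would set up adapted coordinates. Let $L$ be the line through $a,b$ and $L'$ the line through $g(a),g(b)$. Using Lemma~\ref{l:distline} choose unit vectors $c,c'$ with $\varphi_c(\beta)=0$ and $\varphi_{c'}(\beta')=0$, oriented into a fixed side $\H^+$ of $L$ and into the corresponding side of $L'$. For $x\in\H$ off $L$ write $x=a+x_\beta\beta+x_c c$ with $a+x_\beta\beta$ the closest point of $L$ to $x$, and similarly $g(x)=g(a)+y_\beta\beta'+y_c c'$. A continuity/connectedness argument, combined with Lemma~\ref{l:2point}(ii), shows $g$ carries each side of $L$ into a single side of $L'$ (if $g(x)$ met $L'$, then by Lemma~\ref{l:2point}(ii) $x$ would lie on $L$), so the sign of $y_c$ is constant on $\H^+$.

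The heart of the argument exploits that, since every $p(s):=a+s\beta\in A_t$, $g$ preserves the distance from $x$ to all of $[a,b]$. Setting $\phi(u):=\|u\beta+x_c c\|$ and $\psi(u):=\|u\beta'+y_c c'\|$ (both analytic, as their arguments never vanish for $x\notin L$), the identity $\|x-p(s)\|=\|g(x)-g(p(s))\|$ for $s\in[0,1]$ reads $\phi(x_\beta-s)=\psi(y_\beta-s)$ on an interval, hence $\phi(u)=\psi(u+\delta)$ for all $u$ by Proposition~\ref{p:analytic1d}, where $\delta:=y_\beta-x_\beta$. By Lemma~\ref{l:distline} each of $\phi,\psi$ has a unique minimiser at $u=0$, so comparing forces $\delta=0$; thus $x_\beta=y_\beta$ and $\phi\equiv\psi$. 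Evaluating at $u=0$ gives $|x_c|=|y_c|$, and the fixed side gives $y_c=x_c$, whence $\|u\beta+x_c c\|=\|u\beta'+x_c c'\|$ for all $u\in\mathbb{R}$. Letting $x$ range over an open subset of $\H^+$ (so $x_c$ sweeps an interval) and invoking homogeneity of the norm, the linear map $T$ determined by $T\beta=\beta'$, $Tc=c'$ satisfies $\|Tv\|=\|v\|$ for every $v$, i.e.\ $T$ is a linear isometry.

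It then follows that $g(x)=g(a)+T(x-a)$ on this open subset of $\H^+$, so $\|g(x_1)-g(x_2)\|=\|x_1-x_2\|$ there, and Lemma~\ref{l:theyareisom} concludes that $g$ is a linear isometry. I expect the main obstacle to be the middle step: correctly converting ``all segment distances are preserved'' into the one-variable functional equation $\phi=\psi(\cdot+\delta)$, globalising it through analyticity, and — the most delicate part — controlling the orientation (the side of $L'$) so that $T$ emerges as a genuine isometry rather than a shear.
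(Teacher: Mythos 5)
Your proof is correct and follows essentially the same route as the paper's: both first show that $r_z(\cdot,t)$ maps $[a,b]$ affinely onto $[r_z(a,t),r_z(b,t)]$, then use Lemma \ref{l:distline} to set up perpendicular coordinates, show the perpendicular component is preserved with the sign fixed by a continuity/side-preservation argument, and conclude by exhibiting an affine isometry agreeing with $r_z(\cdot,t)$ on an open set and invoking Lemma \ref{l:theyareisom}. The only real difference is the middle step, where the paper argues directly that the distance from $x$ to the segment (and hence the foot of the perpendicular) is preserved, while you derive the one-variable functional equation $\phi=\psi(\cdot+\delta)$ and use Proposition \ref{p:analytic1d} together with uniqueness of the minimiser --- a minor variation that changes nothing essential.
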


\begin{proof}
	Let $\ell$ be the line through $a,b$ and choose any $x \in \ell \cap \H$.
	Then $x = (1-\alpha) a +\alpha b$ for some $\alpha \in \mathbb{R}$,
	\begin{align*}
		\|x - a\| = |\alpha|\|b-a\| \mbox{ and } \|x - b\| = |1-\alpha|\|b-a\|.
	\end{align*}
	As $a,b \in A_t$ we have that 
	\begin{align*}
		\|r_z(x,t) - r_z(a,t)\| = \|x - a\| = |\alpha|\|b-a\| = |\alpha|\|r_z(b,t) - r_z(a,t)\| \mbox{ and} \\
		\|r_z(x,t) - r_z(b,t)\| = \|x - b\| = |1-\alpha|\|b-a\| = |1-\alpha|\|r_z(b,t) - r_z(a,t)\|.
	\end{align*}
	Since $X$ is strictly convex (Proposition \ref{prop:strconv}), Lemma \ref{l:2point} implies that $r_z(x,t) = (1-\alpha)r_z(a,t) + \alpha r_z(b,t)$. Thus the points of $r_z(\ell \cap \H,t)$ are colinear and the distances between points preserved.
	Fix a point $\ell^\perp \in X$ where $\varphi_{\ell^\perp}(b-a)=0$ and $\|\ell^\perp\|=1$.
	By Lemma \ref{l:distline} there exists, for each $x \in \H$, unique scalars $\alpha_x,\beta_x$ where: (i) $x = (1-\alpha_x) a +\alpha_x b + \beta_x \ell^\perp$,
	(ii) the closest point to $x$ in $\ell$ is $(1-\alpha_x) a +\alpha_x b$,
	(iii) the distance from $x$ to its closest point in $\ell$ is $|\beta_x|$,
	and (iv) $\varphi_{\ell^\perp} (x -a) = |\beta_x|$.
	The maps $x \mapsto \alpha_x,\beta_x$ are continuous,
	since the map that takes points to their closest point on a fixed convex set is continuous.
	
	Define $U$ to be the set of points $x$ in $\H$ where the closest point to $x$ in $\ell$ lies in the open interval $(a,b)$.
	By taking a suitable open cover of $(a,b)$ we see that $U$ contains an open neighbourhood of $(a,b)$.
	Thus we may choose an open connected subset $U' \subset U$.
	As the distance from any point to the line segment $[a,b]$ is preserved,
	the closest point to $r_z(x,t)$ in $r_z(\ell,t)$ is $r_z((1-\alpha_x) a +\alpha_x b,t)$ and the distance from $r_z(x,t)$ to $r_z((1-\alpha_x) a +\alpha_x b,t)$ is $|\beta_x|$.
	By Lemma \ref{l:distline} we may choose $w \in X$ so that $\varphi_{w}(r_z(b,t) - r_z(a,t))=0$, $\|w\|=1$ and for any $x \in U'$ we have
	\begin{align*}
		r_z(x,t) = r_z((1-\alpha_x) a +\alpha_x b ) \pm \beta_x w = (1-\alpha_x) r_z(a,t) +\alpha_x r_z(b,t) \pm \beta_x w.
	\end{align*}	
	By the continuity of $r_z (\cdot, t)$, we are able to choose $w$ such that the plus/minus sign above is always a plus sign.
	It follows that we may extend the map $r_z(\cdot ,t)|_{U'}$ to a unique affine map $T : X \rightarrow X$ by setting 
	\begin{align*}
		T((1-\alpha) a +\alpha b + \beta \ell^\perp) := (1-\alpha) r_z(a,t) +\alpha r_z(b,t) + \beta w
	\end{align*}
	for all $\alpha, \beta \in \mathbb{R}$.
	The map $T$ is a linear isometry,
	as $T(0)=0$ and $\|T(x)\| = \|x\|$ for all $x \in U'$ (see Theorem \ref{t:mazurulam}).
	Hence $r_z(\cdot,t)$ is an isometry by Lemma \ref{l:theyareisom}.
\end{proof}

\begin{lem}\label{l:pathint2}
	Let $O$ be an open subset of $\mathbb{R}^2$ and $\alpha:[0,1] \rightarrow O$ be a smooth path with $\alpha'(t) \neq 0$ for all $t \in [0,1]$.
	If $\alpha$ is not a line segment then there exists a non-empty open set $U \subset O \times O$ of pairs of distinct points $(x_1,x_2)$ where the line through both points intersects the path $\alpha$ in at least two distinct points.
\end{lem}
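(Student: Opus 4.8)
The plan is to reduce the statement to the existence of a single \emph{transversal secant} of $\alpha$ and then to ``open it up'' by a perturbation argument. Introduce the signed area
\[
g(s;x_1,x_2):=\det\bigl(\alpha(s)-x_1,\ x_2-x_1\bigr),
\]
which vanishes precisely when $\alpha(s)$ lies on the line through $x_1,x_2$. Call a pair $a\neq b$ in $[0,1]$ a transversal secant if $\alpha(a)\neq\alpha(b)$ and the curve crosses the line through $\alpha(a),\alpha(b)$ transversally at both endpoints; concretely $\partial_s g(a;\alpha(a),\alpha(b))=\det(\alpha'(a),\alpha(b)-\alpha(a))\neq0$ and likewise at $b$. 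Since transversality is a statement about linear independence of two vectors, it is unaffected by the particular (non-Euclidean) norm on $\mathbb{R}^2$, so I may freely use ordinary determinants here.

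Given a transversal secant, I would apply the implicit function theorem to $g$ viewed as a function of $s$ with parameters $(x_1,x_2)$. Because $g(\cdot;\alpha(a),\alpha(b))$ has nonvanishing $s$-derivative at its zeros $s=a$ and $s=b$, there are continuous functions $s_1,s_2$ on a neighbourhood of $(\alpha(a),\alpha(b))$ in $O\times O$ with $s_1\mapsto a$, $s_2\mapsto b$ and $g(s_i(x_1,x_2);x_1,x_2)=0$. Thus the line through $x_1,x_2$ meets $\alpha$ at $\alpha(s_1)$ and $\alpha(s_2)$; shrinking the neighbourhood so that $\alpha(s_1)$ stays near $\alpha(a)$ and $\alpha(s_2)$ near $\alpha(b)$ — which are distinct — guarantees that these two intersection points are distinct and that $x_1\neq x_2$. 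This neighbourhood is the desired open set $U$.

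It remains to produce a transversal secant, and this is the step I expect to require the most care, as it is where the hypothesis that $\alpha$ is not a line segment is used. I would argue that the direction of $\alpha'(t)$ cannot be constant: if $\alpha''(t)$ were a scalar multiple of $\alpha'(t)$ for every $t$, then solving the linear ODE $\alpha''=\lambda\alpha'$ shows $\alpha'(t)$ is always a positive multiple of $\alpha'(0)$, so the image of $\alpha$ lies on a line and, being the connected image of a map with nonvanishing derivative, is a line segment, contrary to hypothesis. Hence there is an interior $t_\ast\in(0,1)$ with $\alpha'(t_\ast),\alpha''(t_\ast)$ linearly independent. Taking the symmetric chords $a=t_\ast-h$, $b=t_\ast+h$ for small $h>0$, the expansions $\alpha(b)-\alpha(a)=2h\,\alpha'(t_\ast)+O(h^3)$ and $\alpha'(t_\ast\pm h)=\alpha'(t_\ast)\pm h\,\alpha''(t_\ast)+O(h^2)$ give endpoint determinants of the form $\pm 2h^2\det(\alpha''(t_\ast),\alpha'(t_\ast))+O(h^3)$ with opposite signs, both nonzero for small $h$ since $\det(\alpha''(t_\ast),\alpha'(t_\ast))\neq0$. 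This supplies a transversal secant and, via the perturbation argument above, completes the proof.
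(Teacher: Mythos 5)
Your proof is correct, but it follows a genuinely different route from the paper's. The paper works in the space of lines: it parametrises lines as pairs $(n,d)$ with $n\cdot n=1$, notes that the map sending a pair of distinct points to the line through them is continuous, finds $t_1<t_2<t_3$ with $\alpha(t_1),\alpha(t_2),\alpha(t_3)$ non-collinear and $\alpha|_{[t_1,t_3]}$ injective, and observes that the open set of lines strictly separating $\alpha(t_2)$ from $\alpha(t_1)$ and $\alpha(t_3)$ must, by the intermediate value theorem, meet the curve in at least two points; pulling this open set of lines back gives $U$. You instead locate an interior $t_\ast$ with $\det(\alpha'(t_\ast),\alpha''(t_\ast))\neq 0$ (whose existence you correctly derive from the ODE characterisation of line segments among regular paths), build a short symmetric chord meeting the curve transversally at both ends, and use the implicit function theorem to show that the two transversal intersections persist for all pairs $(x_1,x_2)$ near $(\alpha(a),\alpha(b))$. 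Both arguments are sound. The paper's is more economical in regularity --- once the three points are chosen it uses only continuity of $\alpha$ --- whereas yours needs $\alpha\in C^2$ (harmless here, since in the application the path is analytic); in exchange, yours produces an explicit $U$, namely a neighbourhood of $(\alpha(a),\alpha(b))$, together with continuously varying intersection parameters $s_1,s_2$, which is more information than the lemma requires.
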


\begin{proof}
	Let $\cdot$ be the standard dot product of $\mathbb{R}^2$. Define the set
	\begin{align*}
		L:= \{ (n,d) \in \mathbb{R}^2 \times \mathbb{R} : n\cdot n=1 \}.
	\end{align*}
	Each element $(n,d)$ will represent the line $\{ x \in \mathbb{R}^2 : n \cdot x=d \}$,
	and the only other element that will represent the same line is $(-n,-d)$.
	Given the linear transform $T : \mathbb{R}^2 \rightarrow \mathbb{R}^2$ with $T(a,b) = (b,-a)$ (i.e.~the $90^\circ$ clockwise rotation),
	define the continuous map $l : O \times O \setminus \{(x,x) : x \in  O \} \rightarrow L$ with
	\begin{align*}
		l(x,y) = \left( \frac{T(x-y)}{\sqrt{T(x-y)\cdot T(x-y)}}, \frac{T(x-y)\cdot x}{\sqrt{T(x-y)\cdot T(x-y)}} \right).
	\end{align*}
	As $l$ is continuous and $\alpha$ is a curve in $O$,
	it suffices for us to show that there is an open set of elements $(n,d) \in L$ where $n \cdot \alpha(t) = d$ for at least two distinct values $t$ in $[0,1]$.
	
	As $\alpha$ is not a line segment and $\alpha' \neq 0$,
	there exists $0 \leq t_1<t_2<t_3\leq 1$ where $\alpha(t_1),\alpha(t_2),\alpha(t_3)$ are not colinear and $\alpha|_{[t_1,t_3]}$ is injective.
	As $\alpha(t_1),\alpha(t_2),\alpha(t_3)$ are not colinear,
	there exists an open set $U \subset L$ of points $(n,d)$ where $n\cdot \alpha(t_1)< d$, $n\cdot \alpha(t_2) > d$ and $n\cdot \alpha(t_3)<d$.
	By the intermediate value theorem,
	for each $(n,d) \in U$ there exists $s_1 \in (t_1,t_2)$ and $s_2 \in (t_2,t_3)$ where $n\cdot \alpha(s_1)= n\cdot \alpha(s_2)=d$ as required.	
\end{proof}

We are now ready to prove our first key lemma of the section.

\begin{lem}\label{l:apath}
	For all $t \in \mathbb{T}$,
	if $A_t$ contains a smooth path	then $r_z( \cdot, t)$ is an isometry.
\end{lem}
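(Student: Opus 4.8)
The plan is to reduce the statement to the two situations already handled, namely Lemma \ref{l:aline} (when $A_t$ contains a line segment) and the distance-preservation criterion of Lemma \ref{l:theyareisom}. Suppose $A_t$ contains a smooth path $\alpha$. After reparametrising and restricting the parameter interval we may assume $\alpha$ is a regular injective arc (i.e.\ $\alpha' \neq 0$ throughout), since a non-constant smooth path always contains such a subarc. If $\alpha$ lies on a single line then $A_t$ contains a line segment and we are done by Lemma \ref{l:aline}. Otherwise $\alpha$ contains a regular subarc that is not a line segment (a smooth path that is locally linear everywhere is itself a line segment), so I would assume from now on that $\alpha$ is not a line segment.

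In this case I would apply Lemma \ref{l:pathint2}, with $O$ an open subset of $\H$ containing $\alpha$, to obtain a non-empty open set $U \subset O \times O$ of pairs of distinct points $(x_1,x_2)$ for which the line through $x_1$ and $x_2$ meets $\alpha$, and hence $A_t$, in two distinct points $a,b$. The key observation is that the affine-preservation computation from the proof of Lemma \ref{l:aline} uses only that $a,b \in A_t$: every point $x$ on the line through $a,b$ satisfies $\|r_z(x,t)-r_z(a,t)\| = \|x-a\|$ and $\|r_z(x,t)-r_z(b,t)\| = \|x-b\|$, and strict convexity together with Lemma \ref{l:2point} then forces $r_z(x,t)$ to be the corresponding affine combination of $r_z(a,t)$ and $r_z(b,t)$. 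Applying this to the points $x_1,x_2$ on that line and using $\|r_z(a,t)-r_z(b,t)\| = \|a-b\|$ yields $\|r_z(x_1,t)-r_z(x_2,t)\| = \|x_1-x_2\|$ for every $(x_1,x_2) \in U$.

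To finish I would bootstrap this from the open set $U$ to all of $\H$ by analyticity. The function $F(x,y) := \|r_z(x,t)-r_z(y,t)\| - \|x-y\|$ is analytic on the connected open set $(\H \times \H) \setminus \Delta$, where $\Delta$ is the diagonal, using that $r_z(\cdot,t)$ is analytic on $\H$ by Theorem \ref{t:genrot}(\ref{l:genrot2}) and injective by Theorem \ref{t:genrot}(\ref{l:genrot1.5}). Since $F$ vanishes on the non-empty open set $U$, Proposition \ref{p:anvar} forces $F \equiv 0$, so $\|r_z(x,t)-r_z(y,t)\| = \|x-y\|$ for all $x,y \in \H$. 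Taking any open ball $W \subset \H$, Lemma \ref{l:theyareisom} then gives that $r_z(\cdot,t)$ is a linear isometry.

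The main obstacle is recognising that the affine-preservation step of Lemma \ref{l:aline} is precisely what converts two $A_t$-points lying on a common line into distance preservation between the two otherwise arbitrary endpoints $x_1,x_2$; once this is seen, Lemma \ref{l:pathint2} supplies a whole open set of such pairs and the analytic continuation argument closes the gap. The only genuinely technical point is the harmless reduction of an arbitrary smooth path to a regular non-linear subarc so that Lemma \ref{l:pathint2} applies.
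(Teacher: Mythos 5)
Your proposal is correct and follows essentially the same route as the paper's proof: reduce via Lemma \ref{l:aline} when the path is a segment, otherwise use Lemma \ref{l:pathint2} to produce an open set of pairs whose connecting line meets $A_t$ twice, deduce distance preservation on that open set via Lemma \ref{l:2point} and strict convexity, and conclude by analytic continuation (Proposition \ref{p:anvar}) and Lemma \ref{l:theyareisom}. Your explicit reduction to a regular non-linear subarc is a small but worthwhile tidying of a hypothesis of Lemma \ref{l:pathint2} that the paper leaves implicit.
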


\begin{proof}
	Let $\alpha :[0,1] \rightarrow \mathbb{R}^2$ be a smooth path in $A_t$.
	If $\alpha$ is a line segment then the result holds by Lemma \ref{l:aline}.
	Suppose otherwise.
	By Lemma \ref{l:pathint2},
	there exists an open set $U \subset \H \times \H$ of points $(x_1,x_2)$ where the line through $x_1,x_2$ intersects two distinct of $\alpha$.
	Choose any $(x_1,x_2) \in U$ and suppose they intersect $\alpha$ at the distinct points $\alpha(c_1),\alpha(c_2)$.
	Since the points are colinear and the distances between the pairs $\{x_1,\alpha(c_1)\}$, 
	$\{x_1,\alpha(c_2)\}$,
	$\{x_2,\alpha(c_1)\}$,
	$\{x_2,\alpha(c_2)\}$ and $\{\alpha(c_1),\alpha(c_2)\}$ are preserved under $r_z(\cdot ,t)$,
	then it follows from Lemma \ref{l:2point} that
	\begin{align*}
		\|r_z(x_1,t) - r_z(x_2,t)\| = \|x_1 - x_2\|
	\end{align*}
	for each $(x_1,x_2) \in U$.
	Define the function
	\begin{align*}
		f: \H \times \H \rightarrow \mathbb{R}, ~ (x_1,x_2) \mapsto \|r_z(x_1,t) - r_z(x_2,t)\| - \|x_1 - x_2\|.
	\end{align*}
	The map $f$ is analytic on the open connected set $\{ (x_1,x_2) \in \H \times \H : x_1 \neq x_2\}$.
	Hence,
	as the open set $U$ is contained in the zero set of $f$ then $f (x_1,x_2) =0$ for all $(x_1,x_2) \in \H$.
	By Lemma \ref{l:theyareisom},
	$r_z(\cdot,t)$ is an isometry.
\end{proof}

\subsection{Determining how \texorpdfstring{$S_y$}{Sy} changes under perturbation}

In this section we will construct the tools to help us understand how $S_y$ changes as $y$ moves (see Lemma \ref{l:hcover}).
We recall that $d_{\mathbb{T}}$ is the metric of the circle group $\mathbb{T}$ (see Section \ref{sec:prelim}).

\begin{lem}\label{l:bandiff}
	Let $C^1(\mathbb{T})$ be the Banach space of continuously differentiable functions from $\mathbb{T}$ to $\mathbb{R}$ with the norm 
	\begin{align*}
		\| f \|' := \sup_{t \in \mathbb{T}} | f (t)| + \sup_{t \in \mathbb{T}} | f' (t)|.
	\end{align*}
	Suppose $f \in C^1 (\mathbb{T})$ has a finite zero set $Z(f)$,
	and for every $t \in \mathbb{T}$,
	$f'(t) \neq 0$.
	Then for every $\epsilon >0$ where
	\begin{align*}
		\epsilon < \frac{1}{2} \min \{ d_{\mathbb{T}}(s,t) : s, t \in Z(f), s \neq t\} 
	\end{align*}
	there exists $\delta > 0$ such that if $\|g -f \|' < \delta$ then $|Z(g)| = |Z(f)|$ and $g'(t) \neq 0$ for all $t \in Z(g)$,
	and for every $t \in Z(f)$ there exists a unique $t' \in Z(g)$ where $d_{\mathbb{T}}(t,t')< \epsilon$.
\end{lem}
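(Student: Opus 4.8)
The plan is to localise near each zero of $f$ and invoke the stability of simple zeros under $C^1$-perturbation. The hypothesis ensures $f'$ does not vanish at any zero of $f$, so $Z(f) = \{t_1, \dots, t_n\}$ consists of finitely many simple zeros. For each $i$, since $f'(t_i) \neq 0$ and $f'$ is continuous, I would choose a radius $\rho_i \in (0,\epsilon)$ and a constant $c_i > 0$ so that $|f'(t)| \geq c_i$ and $f'$ has a constant sign on the closed arc $I_i := \{ t \in \mathbb{T} : d_{\mathbb{T}}(t,t_i) \leq \rho_i \}$; then $f$ is strictly monotone on $I_i$ and, as $f(t_i)=0$ with $t_i$ interior, $f$ takes opposite signs at the two endpoints of $I_i$. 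The condition $\epsilon < \tfrac{1}{2}\min\{ d_{\mathbb{T}}(s,t) : s,t \in Z(f),\, s \neq t\}$ makes the balls $B_\epsilon(t_i)$ pairwise disjoint, and since $\rho_i < \epsilon$ we have $I_i \subset B_\epsilon(t_i)$.

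Next I would control $f$ away from its zeros. The set $K := \mathbb{T} \setminus \bigcup_i \operatorname{int} I_i$ is compact and contains no zero of $f$, so $m := \min_{t \in K} |f(t)| > 0$; note each endpoint of each $I_i$ lies in $K$. Put $\delta := \min\{ m, c_1, \dots, c_n \}$ and suppose $\|g-f\|' < \delta$. Since $\sup_t |g(t)-f(t)| \leq \|g-f\|' < \delta \leq m$, the function $g$ has no zero on $K$ and agrees in sign with $f$ at the endpoints of each $I_i$; and since $\sup_t |g'(t)-f'(t)| \leq \|g-f\|' < \delta \leq c_i$, the derivative $g'$ keeps the same (nonzero) sign as $f'$ throughout $I_i$, so $g$ is strictly monotone on $I_i$.

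Putting these together, on each $I_i$ the function $g$ changes sign between the endpoints and is strictly monotone, so by the intermediate value theorem it has exactly one zero $t_i' \in \operatorname{int} I_i$, with $g'(t_i') \neq 0$. As $g$ has no zero on $K$, these are all of its zeros, so $|Z(g)| = n = |Z(f)|$ and every zero of $g$ is simple. The arcs $I_i \subset B_\epsilon(t_i)$ are pairwise disjoint, so $t_i \mapsto t_i'$ is a bijection $Z(f) \to Z(g)$ with $d_{\mathbb{T}}(t_i,t_i') \leq \rho_i < \epsilon$; moreover any zero of $g$ lying within $\epsilon$ of $t_i$ lies in $B_\epsilon(t_i)$, which is disjoint from every $I_j$ with $j \neq i$, and hence equals $t_i'$. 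This gives the asserted uniqueness.

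The estimates themselves are routine; the point needing care is the separate roles of the two smallness conditions on $\delta$. The $C^0$ bound $m$ is what removes spurious zeros of $g$ on $K$ and pins down the boundary signs needed for the intermediate value theorem, whereas the $C^1$ bounds $c_i$ are what preserve strict monotonicity on each arc, thereby securing both the uniqueness of the zero in $I_i$ and its simplicity. The disjointness of the $B_\epsilon(t_i)$, which is exactly the content of the hypothesis on $\epsilon$, is what turns the local correspondence into a global bijection.
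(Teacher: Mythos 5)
Your proof is correct and follows essentially the same strategy as the paper's: isolate each (simple) zero in a short closed arc on which $|f'|$ is bounded below and $f$ changes sign, bound $|f|$ from below off these arcs, and then use the sign change plus strict monotonicity to locate exactly one simple zero of $g$ per arc, with the hypothesis on $\epsilon$ giving disjointness and hence the bijection. The only difference is cosmetic — you take metric balls of radius $\rho_i<\epsilon$ and an explicit minimum $m$ of $|f|$ on the complement, where the paper uses connected components of the sublevel set $\{|f|\le r\}$ and shrinks $r$ — and your handling of the ``no spurious zeros off the arcs'' step is, if anything, slightly more careful.
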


\begin{proof}
	Fix $\epsilon$ and $f \in C^1(\mathbb{T})$.
	As the set $Z(f')$ is compact and disjoint from $Z(f)$,
	we may choose $r > 0$ so that $r < |f(t)|$ for all $t \in \mathbb{T}$ where $f'(t) = 0$.	Define for each $s \in Z(f)$ the set $I_s$ to be the largest closed connected set containing $s$ where $|f(t)| \leq r$ for all $t \in I_s$.
	By shrinking $r$ we also shrink each set $I_s$,
	hence we may assume $r$ was chosen to be small enough that the length of the each of the intervals is less than $\epsilon$.
	Hence,
	$I_s \cap I_{s'} = \emptyset$ for all distinct $s,s' \in Z(f)$.
	For each $s \in Z(f)$, the restricted function $f|_{I_s}$ is strictly monotonic, and there exists $\delta_s >0$ where $|\delta_s| < f'(t)$ for all $t \in I_s$.
	Furthermore,
	for each $s \in Z(f)$ we can label the two boundary points of $I_s$ as $s_{\min},s_{\max}$ so that $f(s_{\min}) = \min_{t \in I_s} f(t) <0$ and $f(s_{\max}) = \max_{t \in I_s} f(t) >0$.
	
	Choose any $0< \delta < r$ so that for every $s \in Z(f)$ we have $\delta < \min \{|f(s_{\min})|,|f(s_{\max})| \}$ and $\delta < \delta_s$.
	If we choose any $g \in C^1(\mathbb{T})$ where $\|g-f\|' < \delta$,
	then the following will hold:
	\begin{enumerate}[(i)]
		\item For every $t \in \mathbb{T} \setminus \bigcup_{s \in Z(f)} I_s$,
		we have
		\begin{align*}
			|g(t)| \geq |f(t)| - |g(t)-f(t)| > r - \delta >0,
		\end{align*}
		thus $g$ has no zeroes in $\mathbb{T} \setminus \bigcup_{s \in Z(f)} I_s$.
		\item For every $s \in Z(f)$ and every $t \in I_s$,
		we have
		\begin{align*}
			|g'(t)| \geq |f'(t)| - |g'(t)-f'(t)| > \delta_s - \delta >0,
		\end{align*}
		thus each function $g|_{I_s}$ is strictly monotonic and $g'(s) \neq 0$.
		\item For each $s \in Z(f)$ we have
		\begin{align*}
			g(s_{\min}) < f(s_{\min}) +\delta < 0 \\
			g(s_{\max}) > f(s_{\max}) - \delta >0
		\end{align*}
		as $f(s_{\min}) < -\delta < 0 < \delta < f(s_{\max})$ by our choice of $\delta$.
		Combined with $g|_{I_s}$ being strictly monotonic,
		this implies there is exactly one zero of $g$ in the interval $I_s$.
	\end{enumerate}
	Since each of the intervals has width less than $\epsilon$,
	the result now holds.	
\end{proof}

\begin{lem}\label{l:sinfgood}
	For all $y \in \H'$,
	there exists an open conull subset $R \subset \H$ of points where the following holds for all $x \in R$;
	there exist only finitely many values $t \in \mathbb{T}$ where
	\begin{align*}
		\| r_z (x,t) - r_z(y,t) \| = \| x-y\|,
	\end{align*}
	and when the above does hold we have
	\begin{align*}
		\frac{d}{dt}\left(\| r_z (x,t) - r_z(y,t) \| - \| x-y\| \right) = \frac{d}{dt}\left(\| r_z (x,t) - r_z(y,t) \| \right) \neq 0.
	\end{align*}
\end{lem}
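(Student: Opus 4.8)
The plan is to realise $R$ as a slice of the strongly regular placements of $K_4$, obtained by pinning three of the four vertices. Write $K_4$ on vertices $v_1,v_2,v_3,v_4$, set $p^x := (0,z,y,x)$, and define
\[
R := \H \cap \{ x \in X : p^x \in \sreg(K_4;X) \}.
\]
Since $\sreg(K_4;X)$ is open (Proposition \ref{p:comreg}) and $x \mapsto p^x$ is continuous, $R$ is open. For conullity I would argue exactly as in Proposition \ref{prop:znice}(\ref{prop:znice2}), but now fixing the placement of three vertices rather than two: because $y$ is $z$-normal there is a completely strongly regular framework $(K_4,p^{x_0})$, so the slice already contains a strongly regular placement, and re-running the Sard-type machinery behind Proposition \ref{p:comreg} (via Lemma \ref{l:sard2}) in this slice shows that $\{x : p^x \in \sreg(K_4;X)\}$ is conull in $X$, whence $R$ is conull in $\H$.

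It then remains to verify the conclusion on $R$, and here the strategy is contrapositive: a failure of the conclusion at some $x\in R$ manufactures a non-trivial infinitesimal flex of a $K_4$-framework equivalent to $(K_4,p^x)$, contradicting strong regularity. Fix $x\in R$ and put $F(t):=\|r_z(x,t)-r_z(y,t)\|-\|x-y\|$. As $x\neq y$ and $r_z(\cdot,t)$ is injective (Theorem \ref{t:genrot}(\ref{l:genrot1.5})), the two arguments never coincide, so $F$ is analytic on $\mathbb{T}$. If the stated conclusion fails, then either $F\equiv 0$ or $F$ has a zero $t$ with $F'(t)=0$; in either case there is some $t\in\mathbb{T}$ with $F(t)=F'(t)=0$.

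For such a $t$ I would consider $q := (0,\,r_z(z,t),\,r_z(y,t),\,r_z(x,t))$. Using that $r_z$ preserves the norm and the distance to $z$ (Theorem \ref{t:genrot}(\ref{l:genrot3})) together with $F(t)=0$, a direct check shows all six edge lengths of $q$ equal those of $p^x$, so $q$ is equivalent to $p^x$. Differentiating the path $s\mapsto(0,r_z(z,s),r_z(y,s),r_z(x,s))$ at $s=t$ gives a velocity $u$: five of the six edge lengths are constant in $s$ and the sixth has derivative $F'(t)=0$, so $u\in\ker df_{K_4}(q)$. Since $u_{v_1}=0$ while $u_{v_2}=f'(t)\neq 0$ (as $\|f'\|\equiv 1$), $u$ is not constant, hence $u\notin\mathcal{T}(q)$; as $X$ is non-Euclidean, $\Isolin(X)$ is finite, so $\mathcal{T}(q)$ is the $2$-dimensional space of translational velocities. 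Thus $\dim\ker df_{K_4}(q)\geq 3$, giving $\rank df_{K_4}(q)\leq 5$. But $K_4$ is $(2,2)$-tight, hence independent (Theorem \ref{thm:rigidne}), so the maximal rank of its rigidity operator is $|E(K_4)|=6$ and $q$ is \emph{not} regular; since $p^x$ is strongly regular every framework equivalent to $(K_4,p^x)$ must be regular, a contradiction. Therefore $F\not\equiv 0$, so by Proposition \ref{p:analytic1d} (applied on arcs covering the compact circle) $F$ has finitely many zeros, each necessarily simple, and at each such zero $\frac{d}{dt}(\|r_z(x,t)-r_z(y,t)\|-\|x-y\|)=\frac{d}{dt}\|r_z(x,t)-r_z(y,t)\|\neq 0$, as required.

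The step I expect to be the main obstacle is the conullity of $R$: transferring the genericity of strong regularity from the full configuration space down to the three-point slice requires re-running the Sard/submersion argument of Propositions \ref{p:comreg} and \ref{prop:znice}(\ref{prop:znice2}) in the slice, and it is precisely here that the hypothesis $y\in\H'$ (that $y$ is $z$-normal) is used, guaranteeing the slice contains a strongly regular placement so that the restricted rigidity map has full rank somewhere and its critical values form a null set. By contrast, the equivalence $q\sim p^x$ and the flex count are elementary once $\mathcal{T}(q)$ is known to be two-dimensional.
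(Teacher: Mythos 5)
Your proposal is correct and follows essentially the same route as the paper: both realise $R$ as the slice of strongly regular placements of $K_4$ with $(v_1,v_2,v_3)$ pinned at $(0,z,y)$ (conullity via the machinery of Proposition \ref{prop:znice}(ii), which is where $z$-normality of $y$ enters), and both observe that the velocity of the rotation path is an infinitesimal flex of $K_4-v_3v_4$ that is non-trivial, so it can only extend to a flex of $K_4$ — forcing $\tfrac{d}{dt}\|r_z(x,t)-r_z(y,t)\|=0$ — at the cost of contradicting the (infinitesimal) rigidity guaranteed by strong regularity. Your cosmetic variations (arguing by contrapositive with a rank count rather than invoking infinitesimal rigidity directly, and deducing finiteness of the zero set from analyticity of $F$ on $\mathbb{T}$ rather than from the finiteness of the equivalence class modulo pinning) are both sound.
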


\begin{proof}
	For every $x \in \H$,
	let $(K_4,p^x)$ be the framework defined in Lemma \ref{l:sfin}.
	By employing similar methods to those outlined in the proof of Proposition \ref{prop:znice}(\ref{prop:znice2}),
	we see that there exists an open conull set $R$ of points $x$ where $(K_4,p^x)$ is completely strongly regular and infinitesimally rigid.
	Define 
	\begin{align*}
		p^x(t) := (r_z(p_{v_i}^x,t))_{i=1}^4.
	\end{align*}
	If $x \in R$ then for each $t \in \mathbb{T}$ the framework $(K_4 - v_3 v_4,p^x(t))$ has a single infinitesimal flex that is $0$ at $v_1$ (modulo scaling),
	and that flex is exactly 
	\begin{align*}
		u_t := (\frac{d}{dt} r_z(p_{v_i}^x,t))_{i=1}^4.
	\end{align*}
	Further,
	$(K_4,p^x(t)) \sim (K_4,p^x)$ if and only if 
	\begin{align}\label{e:sinfgood}
		\| r_z (x,t) - r_z(y,t) \| = \| x-y\|.
	\end{align}
	If $x \in R$ it follows that there exist only finitely many equivalent frameworks to $(K_4,p)$ with the vertex $v_1$ fixed at $0$,
	thus only finitely many $t \in \mathbb{T}$ can satisfy Equation (\ref{e:sinfgood}).	
	Choose any $x \in R$ and $t \in \mathbb{T}$ such that equation \ref{e:sinfgood} holds.
	Then the flex $u_t$ is a non-trivial flex of $(K_4,p^x(t))$ if and only if 
	\begin{align*}
		\frac{d}{dt}\left(\| r_z (x,t) - r_z(y,t) \| \right) =0.
	\end{align*}
	As $(K_4,p^x(t))$ is infinitesimally rigid,
	the flex $u_t$ is trivial as required.
\end{proof}

We now define $\H''$ to be the dense subset of $z$-normal points $y$ where for some neighbourhood $U$ of $y$ we have $|S_y| \leq |S_x|$ for all $x \in U$.
The following key result will allow us to determine how the $S_y$ sets continuously change as we move $y$ in $\H''$.

\begin{lem}\label{l:hcover1}
	For all $y \in \H''$ there exists an open neighbourhood $U \subset \H$ of $y$ and points $a,b \in \H \setminus U$ such that the following holds:
	\begin{enumerate}[(i)]
		\item For each $x \in U$,
		a point $t \in \mathbb{T}$ lies in $S_x$ if and only if
		\begin{align*}
			\| r_z (a,t) - r_z(x,t) \| = \| a-x\|, \qquad \| r_z (b,t) - r_z(x,t) \| = \| b-x\|.
		\end{align*}
		\item There exists $k \in \mathbb{N}$ such that $|S_x| = k$ for all $x \in U$ (and hence $U \subset \H'$).
		\item There exists continuous functions $f_1, \ldots, f_{k} : U \rightarrow \mathbb{T}$ such that for each $x \in U$,
		\begin{align*}
			0 = f_1(x) < \ldots < f_{k}(x) < 2 \pi
		\end{align*}
		are exactly the elements of $S_x$.		
	\end{enumerate}
\end{lem}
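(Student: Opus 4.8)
The plan is to characterise $S_x$, for $x$ near $y$, as the common zero set (in the angular variable $t$) of two analytic functions attached to two well-chosen fixed ``anchor'' points, and then to pin down this common zero set by a cardinality squeeze. Throughout I would write, for $w \in \H$,
\begin{align*}
\Phi_w(x,t) := \| r_z(w,t) - r_z(x,t)\| - \|w-x\|,
\end{align*}
which is analytic on $\{(x,t) : x \in \H \setminus \{w\}\} \times \mathbb{T}$. Since $r_z(\cdot,0)$ is the identity (Theorem \ref{t:genrot}(\ref{l:genrot})) we have $0 \in S_x$ and $\Phi_w(x,0)=0$ for every $x$; in particular $k := |S_y| \ge 1$. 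The only inclusion immediate from the definitions is the forward one: if $t \in S_x$ then every distance from $x$ is preserved, so $\Phi_a(x,t)=\Phi_b(x,t)=0$ for any anchors $a,b$. Thus $S_x \subseteq Z(\Phi_a(x,\cdot)) \cap Z(\Phi_b(x,\cdot))$ always, and it suffices to force the reverse inclusion by counting.

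For the choice of anchors, note $y \in \H'' \subseteq \H'$, so Lemma \ref{l:sinfgood} (with $y$ the fixed $z$-normal point) supplies an open conull set $R \subset \H$ such that for $a \in R$ the map $t \mapsto \Phi_a(y,t)$ has a finite zero set, with $\frac{d}{dt}\Phi_a(y,t) \ne 0$ at each zero. I would fix such an $a \ne y$ and write $Z(\Phi_a(y,\cdot)) = \{t_1,\dots,t_m\}$; the forward inclusion gives $S_y \subseteq Z(\Phi_a(y,\cdot))$, so $m \ge k$. For each $\tau \in Z(\Phi_a(y,\cdot)) \setminus S_y$ the analytic map $w \mapsto \Phi_w(y,\tau)$ on $\H$ is not identically zero (as $\tau \notin S_y$ some $w$ has $\Phi_w(y,\tau)\ne 0$), so by Proposition \ref{p:anvar} its zero set is null. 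Removing these finitely many null sets from $R$, together with $a$ and $y$, I may fix $b \in \H$ with $\tau \notin Z(\Phi_b(y,\cdot))$ for every such $\tau$; that is,
\begin{align*}
Z(\Phi_a(y,\cdot)) \cap Z(\Phi_b(y,\cdot)) = S_y.
\end{align*}

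The squeeze then runs as follows. Since $\Phi_a$ and its $t$-derivative are jointly continuous, $x \mapsto \Phi_a(x,\cdot)$ is continuous into the Banach space $C^1(\mathbb{T})$, so $\Phi_a(x,\cdot) \to \Phi_a(y,\cdot)$ in $\|\cdot\|'$ as $x \to y$. Applying Lemma \ref{l:bandiff} to $f = \Phi_a(y,\cdot)$, there is a connected open neighbourhood $U \subset \H' \setminus \{a,b\}$ of $y$ on which $\Phi_a(x,\cdot)$ has exactly $m$ simple zeros $\tau_1(x),\dots,\tau_m(x)$, each continuous in $x$ and near the corresponding $t_j$. Of $t_1,\dots,t_m$, exactly the $k$ lying in $S_y$ satisfy $\Phi_b(y,t_j)=0$, while the other $m-k$ satisfy $\Phi_b(y,t_j)\ne 0$; shrinking $U$ via continuity of $\Phi_b$ and of the $\tau_j$ keeps $\Phi_b(x,\tau_j(x)) \ne 0$ for those $m-k$ indices, so $Z(\Phi_a(x,\cdot)) \cap Z(\Phi_b(x,\cdot))$ has at most $k$ elements for every $x \in U$. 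On the other hand $y \in \H''$ gives $|S_x| \ge |S_y| = k$ near $y$, so after a further shrink
\begin{align*}
k \le |S_x| \le \big|Z(\Phi_a(x,\cdot)) \cap Z(\Phi_b(x,\cdot))\big| \le k,
\end{align*}
the middle step being the forward inclusion. Hence all three are equal to $k$ and $S_x = Z(\Phi_a(x,\cdot)) \cap Z(\Phi_b(x,\cdot))$, which is exactly (i) and (ii) (with $U \subseteq \H'$ built into the choice of $U$). For (iii), the $k$ common zeros are precisely the $\tau_j(x)$ with $t_j \in S_y$, hence continuous in $x$; since $|S_x|=k$ they never collide, so on the connected set $U$ their order is preserved and the zero at $t=0$ (present for all $x$) stays fixed, and relabelling them increasingly yields continuous $f_1,\dots,f_k$ with $0=f_1(x)<\dots<f_k(x)<2\pi$.

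The main obstacle is the reverse inclusion in (i): two anchor points form a null subset of $\H$, so one \emph{cannot} deduce $t \in S_x$ from $\Phi_a(x,t)=\Phi_b(x,t)=0$ by the analyticity-in-the-space-variable argument (Proposition \ref{p:anvar}) used for single distance functions. The whole point is to sidestep this by transferring the question to counting zeros in the angular variable $t$, where finiteness (Lemma \ref{l:sfin}), stability of simple zeros (Lemma \ref{l:bandiff}), and the lower bound encoded in the definition of $\H''$ conspire to force $|S_x|$ to be the constant $k$. A secondary care point is the genericity of $b$ ensuring no accidental common zeros at $x=y$, which is where Proposition \ref{p:anvar} is genuinely needed.
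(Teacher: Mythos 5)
Your proposal is correct and follows essentially the same route as the paper's proof: the same two-anchor construction (first anchor from Lemma \ref{l:sinfgood}, second chosen via Proposition \ref{p:anvar}/Lemma \ref{l:nots} to kill the spurious common zeros at $y$), the same stability-of-simple-zeros argument via Lemma \ref{l:bandiff} in $C^1(\mathbb{T})$, and the same cardinality squeeze $k \le |S_x| \le |Z(\Phi_a(x,\cdot)) \cap Z(\Phi_b(x,\cdot))| \le k$ using the defining property of $\H''$. The only cosmetic difference is that you apply Lemma \ref{l:bandiff} to the $a$-function alone and handle the $b$-function by pointwise continuity at the tracked zeros, where the paper applies it to both and separates the zero sets by a common $\epsilon$; your treatment of why $f_1 \equiv 0$ is if anything slightly more explicit than the paper's.
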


\begin{proof}
	Define $k := |S_y|$ and order the elements of $S_y$ as $0=: t_1 < \ldots < t_k < 2 \pi$.
	Following from Lemma \ref{l:sinfgood},
	we define $O$ to be the conull set of points $x \in \H\setminus \{y\}$ where the set
	\begin{align*}
		O_x := \{ t \in \mathbb{T} : \| r_z (x,t) - r_z(y,t) \| = \| x-y\| \}
	\end{align*}
	is finite,
	and if $t \in O_x$ then
	\begin{align*}
		\frac{d}{dt}\left(\| r_z (x,t) - r_z(y,t) \| - \| x-y\| \right) = \frac{d}{dt}\left(\| r_z (x,t) - r_z(y,t) \| \right) \neq 0.
	\end{align*}	
	It is immediate that $S_y \subset O_x$ for all $x \in O$.
	Choose and fix $a \in O$.
	As $O_a \setminus S_y$ is finite then by Lemma \ref{l:nots},
	there is a conull subset of points $O' \subset O$ of points $x$ where
	\begin{align*}
		\| r_z (x,s) - r_z(y,s) \| \neq \| x-y\|
	\end{align*}
	for each $s \in O_a \setminus S_y$.
	Now choose and fix $b \in O'$.
	For our choice of $a,b \in \H$,
	the following will hold:
	\begin{enumerate}[(i)]
		\item $a,b \neq y$.
		\item Any point $t \in \mathbb{T}$ will lie in $S_y$ if and only if $\| r_z (a,t) - r_z(y,t) \| = \| a-y\|$ and $\| r_z (b,t) - r_z(y,t) \| = \| b-y\|$.
		\item If $t \in \mathbb{T}$,
		$x \in \{a,b\}$ and $\| r_z (x,t) - r_z(y,t) \| = \| x-y\|$, then 
		\begin{align*}
			\frac{d}{dt}\left(\| r_z (x,t) - r_z(y,t) \| - \| x-y\| \right) = \frac{d}{dt}\left(\| r_z (x,t) - r_z(y,t) \| \right) \neq 0.
		\end{align*}
	\end{enumerate}
	
	Define for every $x \in \H $ the analytic maps $g^a_{x}, g^b_x : \mathbb{T} \rightarrow \mathbb{R}$,
	where for all $t \in \mathbb{T}$ we have
	\begin{align*}
		g^a_{x} (t) &:=& \| r_z (a,t) - r_z(x,t) \| - \| a-x\|, \qquad
		g^b_{x} (t) &:=& \| r_z (b,t) - r_z(x,t) \| - \| b-x\|.
	\end{align*}
	Also define the continuous maps
	\begin{align*}
		g^a : \H \rightarrow C^1(\mathbb{T}), ~ x \mapsto g_{x}, \qquad g^b : \H \rightarrow C^1(\mathbb{T}), ~ x \mapsto g_{x},
	\end{align*}
	where $C^1(\mathbb{T})$ is the Banach space described in Lemma \ref{l:bandiff}.
	By definition we note that $S_y = Z(g^a_y) \cap Z(g^b_y)$.
	Choose any $\epsilon >0$ so that
	\begin{eqnarray*}
		\epsilon &<& \frac{1}{2} \min \left\{ |s-t| : s, t \in Z(g^a_y), ~ s \neq t\right\},\\
		\epsilon &<& \frac{1}{2} \min \left\{ |s-t| : s, t \in Z(g^b_y), ~ s \neq t\right\},\\
		\epsilon &<& \frac{1}{2} \min \left\{ |s-t| : s \in Z(g^a_y) \setminus Z(g^b_y), ~ t \in Z(g^b_y) \setminus Z(g^a_y) \right\}.
	\end{eqnarray*}
	By the continuity of $g^a$ at $g^a_y$ and $g^b$ at $g^b$, combined with Lemma \ref{l:bandiff},
	there exists $\delta > 0$ such that the following holds for all $x \in \H$ where $\|x - y \| < \delta$:	
	\begin{enumerate}[(i)]
		\item $\|a - y \| > \delta$ and $\|b - y \| > \delta$.
		\item $|Z(g^a_x)| = |Z(g^a_y)|$ and for every $t \in Z(g^a_y)$ there exists a unique $s \in Z(g^a_x)$ so that $d_{\mathbb{T}}(s,t)<\epsilon$.
		\item $|Z(g^b_x)| = |Z(g^b_y)|$ and for every $t \in Z(g^b_y)$ there exists a unique $s \in Z(g^b_x)$ so that $d_{\mathbb{T}}(s,t)<\epsilon$.
	\end{enumerate}
	By our choice of $\epsilon$ we must have that $|S_x| \leq |Z(g^a_x) \cap Z(g^b_x)| \leq k$ if $\|x -y\|<\delta$.
	As $y \in \H'$,
	it follows that we can choose an open neighbourhood $U$ of $y$ where for all $x \in U$ we have that (i) $\|x-y\|<\delta$ and (ii) $k \leq |S_x|$.
	This in turn implies $|S_x|=k$ for all $x \in U$;
	importantly, $|S_x|=k$ (and hence $S_x= Z(g^a_x) \cap Z(g^b_x)$) for all $x \in U$.
	Since both $g^a$ and $g^b$ are continuous,
	we can now apply Lemma \ref{l:bandiff} to each pair of maps $g_x^a,g^b_x$ for every $x \in U$ to obtain the continuous functions $f_1,\ldots,f_k$ as required.
\end{proof}	

We now obtain the following result by applying Lemma \ref{l:hcover1} to each point in $\H''$.
	
\begin{lem}\label{l:hcover}
	There exists a countable open cover $\{U_i \subset \H : i \in I\}$ of $\H''$ and a set of pairs $((a_i,b_i))_{i \in I}$ with $a_i,b_i \in \H \setminus U_i$ such that for each $i \in I$ the following holds:
	\begin{enumerate}[(i)]
		\item For each $y \in U_i$,
		a point $t \in \mathbb{T}$ lies in $S_y$ if and only if
		\begin{align*}
			\| r_z (a_i,t) - r_z(y,t) \| = \| a_i-y\|, \qquad \| r_z (b_i,t) - r_z(y,t) \| = \| b_i-y\|.
		\end{align*}
		\item There exists $k_i \in \mathbb{N}$ such that $|S_y| = k_i$ for all $y \in U_i$.
		\item There exists continuous functions $f_1, \ldots, f_{k_i} : U_i \rightarrow \mathbb{T}$ such that for each $y \in U_i$,
		\begin{align*}
			0 = f_1(y) < \ldots < f_{k_i}(y) < 2 \pi
		\end{align*}
		are exactly the elements of $S_y$.		
	\end{enumerate}
\end{lem}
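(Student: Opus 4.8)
The plan is to apply Lemma \ref{l:hcover1} at every point of $\H''$ and then pass from this pointwise statement to a countable family using the fact that $\H$, being an open subset of the two-dimensional space $X \cong \mathbb{R}^2$, is second countable and hence Lindel\"of.

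First I would apply Lemma \ref{l:hcover1} at each $y \in \H''$. For every such $y$ this supplies an open neighbourhood $U^{(y)} \subset \H$ of $y$, a pair of points $a^{(y)}, b^{(y)} \in \H \setminus U^{(y)}$, an integer $k^{(y)} \in \mathbb{N}$, and continuous functions $f_1^{(y)}, \ldots, f_{k^{(y)}}^{(y)} : U^{(y)} \rightarrow \mathbb{T}$, all satisfying conditions (i)--(iii) of that lemma. Since $y \in U^{(y)}$ for each $y$, the family $\{ U^{(y)} : y \in \H'' \}$ is an open cover of $\H''$.

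Next I would invoke second countability. As $X$ is a two-dimensional normed space it is homeomorphic to $\mathbb{R}^2$, and $\H$ is an open subset of $X$; hence $\H$ (and every subset of it) is second countable, and therefore Lindel\"of. Consequently the open cover $\{ U^{(y)} : y \in \H'' \}$ of $\H''$ admits a countable subcover, so there is a countable index set $I$ and points $(y_i)_{i \in I}$ in $\H''$ with $\{ U^{(y_i)} : i \in I \}$ still covering $\H''$. For each $i \in I$ I then set $U_i := U^{(y_i)}$, $a_i := a^{(y_i)}$, $b_i := b^{(y_i)}$, $k_i := k^{(y_i)}$, and take the continuous functions to be those furnished by Lemma \ref{l:hcover1} at $y_i$. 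Properties (i), (ii) and (iii) of the present lemma then hold for each $i$ verbatim, since they are precisely the conclusions of Lemma \ref{l:hcover1} applied at $y_i$, and the containments $a_i, b_i \in \H \setminus U_i$ are inherited directly.

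Essentially all of the genuine work is carried out in Lemma \ref{l:hcover1}, so I do not expect a substantive obstacle here: the argument is a routine globalisation of a local statement. The only point requiring any care is the topological reduction itself, namely recording that a countable subcover of an open cover of $\H''$ remains a cover of $\H''$ (and not merely of its union), which is immediate from the definition of a subcover.
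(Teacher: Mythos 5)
Your proposal is correct and follows essentially the same route as the paper: apply Lemma \ref{l:hcover1} pointwise over $\H''$ and extract a countable subcover from the resulting open cover. If anything, your justification for the countable subcover (second countability of $\H \subset X \cong \mathbb{R}^2$, hence the Lindel\"of property) is cleaner than the paper's appeal to local compactness, which on its own would not suffice.
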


\begin{proof}	
	Apply Lemma \ref{l:hcover1} to every point $y \in \H''$ to obtain a corresponding set open neighbourhood $U_y$ of $y$,
	a pair of points $\{a_y,b_y\}$ with $a_y,b_y \notin U_y$,
	and some $k_y \in \mathbb{N}$.
	The set $\{ U_y :y \in \H''\}$ defines an open cover of $\H''$.
	The set $\H''$ is open by Lemma \ref{l:hcover1}.
	Hence $\H''$ is locally compact,
	and there exists a countable subcover $\{U'_i :i \in I\}$.
\end{proof}

It now follows that the set $\H''$ is an open dense subset of $\H$.

\subsection{Proof of Theorems \ref{l:sisom2} and \ref{l:srefl}}\label{sec:proof}

We are now finally ready to tackle the following key lemma.

\begin{lem}\label{l:aisom}
	There exists a countable set $W \subset \mathbb{T}$ such that the following holds:
	\begin{enumerate}[(i)]
		\item If $t \in \mathbb{T} \setminus W$ then either $A_t =\H$ and $r_z(\cdot, t)$ is an isometry,
		or $A_t \cap \H'' = \emptyset$.
		\item If $t \in W$ then $A_t \cap \H''$ is a countable set.
	\end{enumerate}
\end{lem}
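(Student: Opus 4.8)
The plan is to analyse $A_t \cap \H''$ through the continuous parametrisations of the sets $S_y$ furnished by Lemma \ref{l:hcover}. First I would invoke Lemma \ref{l:hcover} to fix the countable open cover $\{U_i : i \in I\}$ of $\H''$, the pairs $(a_i,b_i)$, the constants $k_i$, and the continuous functions $f_1^{(i)}, \ldots, f_{k_i}^{(i)} : U_i \to \mathbb{T}$ whose values are exactly the elements of $S_y$ for each $y \in U_i$; after passing to connected components (of which there are only countably many) I may assume each $U_i$ is connected. The first technical step is to upgrade each $f_j^{(i)}$ from continuous to analytic. By Lemma \ref{l:hcover}(i) a point $t$ lies in $S_y$ precisely when $g^{a_i}_y(t) := \|r_z(a_i,t)-r_z(y,t)\| - \|a_i-y\|$ vanishes, where $g^{a_i}$ is analytic in $(y,t)$, and by the choice of $a_i$ in Lemma \ref{l:hcover1} together with Lemma \ref{l:bandiff} we have $\tfrac{\partial}{\partial t} g^{a_i}_y(t) \neq 0$ at every such zero throughout $U_i$. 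The analytic implicit function theorem (Theorem \ref{thm:ifta}) then realises the zeros of $g^{a_i}$ locally as analytic graphs over $U_i$; since each continuous $f_j^{(i)}$ takes its values among these finitely many disjoint branches, connectedness of $U_i$ forces it to coincide with a single branch, so each $f_j^{(i)}$ is analytic.

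With analyticity in hand I would record the crucial fibre dichotomy. Fix $i,j,t$; the fibre $(f_j^{(i)})^{-1}(t)$ is exactly the zero set of the analytic map $U_i \to \mathbb{R}^2$, $x \mapsto (\cos f_j^{(i)}(x) - \cos t, \sin f_j^{(i)}(x) - \sin t)$, so Proposition \ref{p:anvar2} applies: the fibre is empty, all of $U_i$, countable, or it contains a smooth path. If $(f_j^{(i)})^{-1}(t) = U_i$ then the open set $U_i$ lies in $A_t$, so $A_t$ has nonempty interior and $r_z(\cdot,t)$ is an isometry by Lemma \ref{l:ainterior}; if the fibre contains a smooth path then that path lies in $A_t$ and $r_z(\cdot,t)$ is an isometry by Lemma \ref{l:apath}. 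In either situation $r_z(\cdot,t)$ being an isometry forces $A_t = \H$. Contrapositively, whenever $r_z(\cdot,t)$ is not an isometry, every fibre $(f_j^{(i)})^{-1}(t)$ is countable.

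Next I would prove that the exceptional values are countable. Fix $(i,j)$ and write $g := f_j^{(i)}$. If $g$ is constant then no fibre is simultaneously countable and nonempty and there is nothing to show; assume $g$ is non-constant, so the analytic map $\nabla g = (\partial_1 g, \partial_2 g) : U_i \to \mathbb{R}^2$ is not identically zero and $\operatorname{Crit}(g) := Z(\nabla g) \neq U_i$. If $g^{-1}(t)$ is countable and nonempty then each of its points is critical (at a point where $dg \neq 0$ the fibre would be a $1$-manifold), so $t \in g(\operatorname{Crit}(g))$. The key observation is that $g(\operatorname{Crit}(g))$ is countable: Proposition \ref{p:anvar2} shows that $\operatorname{Crit}(g)$ is countable or equals $\phi(M)$ for a real analytic $1$-manifold $M$ with countably many components and an analytic map $\phi$; in the latter case, since $dg$ vanishes on $\operatorname{Crit}(g)$, the composite $g \circ \phi$ has zero derivative and is therefore constant on each component of $M$. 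Either way $g(\operatorname{Crit}(g))$ is countable, and hence $V_{i,j} := \{ t \in \mathbb{T} : (f_j^{(i)})^{-1}(t) \text{ is countable and nonempty}\} \subseteq g(\operatorname{Crit}(g))$ is countable.

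Finally I would set $W := \{ t \in \mathbb{T} : A_t \cap \H'' \neq \emptyset \text{ and } r_z(\cdot,t) \text{ is not an isometry}\}$ and verify the requirements. For countability of $W$: if $t \in W$ then some $y \in \H'' \subseteq \bigcup_i U_i$ has $t \in S_y$, so $t = f_j^{(i)}(y)$ for some $i,j$; as $r_z(\cdot,t)$ is not an isometry the fibre $(f_j^{(i)})^{-1}(t) \ni y$ is countable and nonempty by the dichotomy, whence $t \in V_{i,j}$, so $W \subseteq \bigcup_{i,j} V_{i,j}$ is countable. For (ii): for $t \in W$ the dichotomy gives $A_t \cap \H'' \subseteq \bigcup_{i,j} (f_j^{(i)})^{-1}(t)$, a countable union of countable fibres, hence countable. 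For (i): if $t \notin W$ then either $r_z(\cdot,t)$ is an isometry, in which case $A_t = \H$, or it is not an isometry, in which case $A_t \cap \H'' = \emptyset$ by the definition of $W$. The main obstacle is the countability of $W$, and inside that the countability of $f_j^{(i)}(\operatorname{Crit}(f_j^{(i)}))$, which hinges on the interplay between Proposition \ref{p:anvar2} and the vanishing of the gradient on the critical set; the preliminary upgrade of the $f_j^{(i)}$ to analytic maps is the other delicate point.
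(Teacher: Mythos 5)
Your proof is correct, and it reaches the statement by a genuinely different route in its second half. Both arguments start from Lemma \ref{l:hcover}, and both use Proposition \ref{p:anvar2} together with Lemmas \ref{l:ainterior} and \ref{l:apath} to establish the dichotomy ``countable fibre or $r_z(\cdot,t)$ is an isometry''; the paper applies Proposition \ref{p:anvar2} once per $t$ to the zero set of $g_t(x)$ (which is all of $A_t\cap U_i$), whereas you apply it to each fibre $(f_j^{(i)})^{-1}(t)$ separately, which is equivalent since $A_t\cap U_i=\bigcup_j (f_j^{(i)})^{-1}(t)$. The real divergence is in how the exceptional set $W$ is produced. The paper takes $W_i$ to be a finite set of extremal values of the continuous functions $f_j^{(i)}$ over $U_i$ and rules out ``countable but nonempty'' fibres for $t\notin W_i$ by an intermediate value theorem argument applied to uncountably many disjoint paths; this is elementary but the definition of $W_i$ is ad hoc. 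You instead upgrade each $f_j^{(i)}$ to an analytic function via Lemma \ref{l:bandiff}, Theorem \ref{thm:ifta} and a local branch-selection argument (your phrase ``coincide with a single branch'' should really be a local statement, since global branches need not exist on a non--simply-connected $U_i$, but local agreement with an analytic branch already gives analyticity), and then run a miniature Sard-type argument: a countable nonempty fibre consists of critical points, and $f_j^{(i)}(\operatorname{Crit}(f_j^{(i)}))$ is countable because the restriction of $f_j^{(i)}$ to each $1$-dimensional component of the critical set has vanishing derivative. This costs you the extra analyticity upgrade but buys a cleaner, intrinsic definition of $W$ as the set of non-isometry parameters $t$ with $A_t\cap\H''\neq\emptyset$; a pleasant side effect is that conclusion (ii) is then immediate for every $t\in W$, whereas the paper's $W=\bigcup_i W_i$ could in principle contain a parameter $t$ for which $r_z(\cdot,t)$ is an isometry and $A_t=\H$, a corner case your formulation excludes by construction.
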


\begin{proof}
	Let $\{U_i : i \in I\}$ be the countable open cover of $\H''$ with corresponding points $\{(a_i,b_i) : i \in I\}$ as defined in Lemma \ref{l:hcover};
	by separating any disconnected open sets into their countable components,
	we may assume each $U_i$ is connected.
	We note that it is suffice to show that the result holds for $A_t \cap U_i$ (given arbitrary $i \in I$) instead of for $A_t \cap \H''$,
	as the former will imply the latter.
	Due to this,
	we shall fix $i \in I$, we shall also let $k_i \in \mathbb{N}$ and $f_1, \ldots, f_{k_i}$ be as described in Lemma \ref{l:hcover}.
	
	Consider the ordering $0 \leq t < 2 \pi$ for $\mathbb{T}$;
	if the maximum of a set of points is $2 \pi$, we shall consider it to be 0.
	Define $W_i$ to be the finite set of points $t \in \mathbb{T}$ where the following holds:
	\begin{enumerate}
		\item There exists $j' \in \{1, \ldots, k_i\}$ so that either $\min_{x \in U_i} f_{j'} (x) = t$ or $\max_{x \in U_i} f_{j'} (x) = t$.
		\item For all other $j \in \{1, \ldots, k_i\}$ we have either $\min_{x \in U_i} f_j (x) = t$,
		$\max_{x \in U_i} f_j (x) = t$,
		or $f_j (x) \neq t$ for all $x \in U_i$.	
	\end{enumerate}
	Note that we are using minimums and maximums that may or may not exists,
	so $W_i$ may be empty.
	Define for each $t \in \mathbb{T}$ the analytic map $g_t : U_i \rightarrow \mathbb{R}$,
	where for $x \in U_i$ we have
	\begin{align*}
		g_t (x) = \left( \|r_z(x,t) - r_z(a_i,t)\| - \|x-a_i\| \right)^2 + \left( \|r_z(x,t) - r_z(b_i,t)\| - \|x-b_i\| \right)^2.
	\end{align*}
	By Lemma \ref{l:hcover} we have that $g_t(x) = 0$ if and only if $t \in S_x$,
	hence	$Z(g_t) = A_t \cap U_i$ for each $t \in \mathbb{T}$.	
	We now need to show three things hold for any $t \in \mathbb{T}$:
	\begin{enumerate}[(i)]
		\item \label{e:aisom1} The set $A_t \cap U_i$ is either countable or $A_t \cap U_i = U_i$.
		\item \label{e:aisom2} If $A_t \cap U_i = U_i$ then $r_z(\cdot, t)$ is an isometry.
		\item \label{e:aisom3} If $t \notin W_i$ and $A_t \cap U_i$ is countable then $A_t \cap U_i = \emptyset$.
	\end{enumerate}
	
	(\ref{e:aisom1}): 
	By Proposition \ref{p:anvar2},
	we have that either $A_t \cap U_i \in \{\emptyset, U_i\}$,
	$A_t \cap U_i$ is countable,
	or there exists an injective analytic path $\phi : [0,1] \rightarrow A_t \cap U_t$ with $\phi' \neq 0$.
	If the latter holds then by Lemma \ref{l:apath},
	$A_t \cap U_i = U_i$ as required.
	
	(\ref{e:aisom2}): This follows immediately from Lemma \ref{l:ainterior}.
	
	(\ref{e:aisom3}): Suppose $A_t \cap U_i$ is countable but not empty for some $t \notin W_i$.
	By our assumptions plus the continuity of the maps $f_1, \ldots, f_{k_i}$,
	there exists $j \in \{1,\ldots, k_i\}$ and $x_0,x_1 \in U_i$ such that $f_j(x_0) < t< f_j(x_1)$.
	We note that for any continuous path $\alpha:[0,1] \rightarrow U_i$ with $\alpha(0) = x_0$ and $\alpha(1) = x_1$,
	we have by the Intermediate Value Theorem the existence of a point $c \in (0,1)$ where $f_j(\alpha(c)) = t$.
	Since we can define an uncountable amount of distinct continuous paths from $x_0$ to $x_1$,
	$A_t \cap U_i$ cannot be countable,
	a contradiction.	
	
	As $i \in I$ was chosen arbitrarily,
	(\ref{e:aisom1}), (\ref{e:aisom2}) and (\ref{e:aisom3}) all hold for any $i \in I$.
	The required result now follows immediately with $W := \bigcup_{i \in I} W_i$.
\end{proof}

This allows us to prove our final key lemma.

\begin{lem}\label{l:sisom}
	There is a comeagre set of points $y \in \H$ with $S_y$ being exactly the set of points $t$ where $r_z(\cdot, t)$ is an isometry.
\end{lem}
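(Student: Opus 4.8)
The plan is to isolate the ``bad'' parameters and show that the placements $y$ detecting them form a meagre set. I would first record the trivial inclusion that drives everything: if $r_z(\cdot,t)$ is an isometry then $\|r_z(x,t)-r_z(y,t)\| = \|x-y\|$ for all $x,y \in X$, so in particular $t \in S_y$ for \emph{every} $y \in \H$. Writing $T := \{ t \in \mathbb{T} : r_z(\cdot,t) \text{ is an isometry}\}$, this gives $T \subseteq S_y$ for all $y$, so the lemma reduces to producing a comeagre set of $y$ with the reverse inclusion $S_y \subseteq T$. Equivalently, setting
\begin{align*}
	B := \bigcup_{t \in \mathbb{T} \setminus T} A_t,
\end{align*}
we have $y \in B$ precisely when $S_y \not\subseteq T$, so it suffices to prove that $B$ is a meagre subset of $\H$.

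The heart of the argument is to apply Lemma \ref{l:aisom} after intersecting with $\H''$. Let $W \subset \mathbb{T}$ be the countable set supplied by that lemma. For a parameter $t \in \mathbb{T} \setminus T$ (so $r_z(\cdot,t)$ is \emph{not} an isometry): if $t \notin W$, then Lemma \ref{l:aisom}(i) forces $A_t \cap \H'' = \emptyset$, since the alternative there requires $r_z(\cdot,t)$ to be an isometry; and if $t \in W$, then Lemma \ref{l:aisom}(ii) gives that $A_t \cap \H''$ is countable. Hence
\begin{align*}
	B \cap \H'' = \bigcup_{t \in (\mathbb{T}\setminus T)\cap W} \big(A_t \cap \H''\big)
\end{align*}
is a countable union (as $W$ is countable) of countable sets, and is therefore countable. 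In particular $B \cap \H''$ is meagre in $\H$, since singletons are nowhere dense in the open planar set $\H$.

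It remains to account for the points outside $\H''$. Recall from the discussion following Lemma \ref{l:hcover} that $\H''$ is an open dense subset of $\H$; consequently its complement $\H \setminus \H''$ is closed with empty interior, i.e.\ nowhere dense, and hence meagre. Writing $B = (B \cap \H'') \cup (B \cap (\H \setminus \H''))$ exhibits $B$ as the union of a countable (hence meagre) set and a subset of a nowhere dense set, so $B$ is meagre and $\H \setminus B$ is comeagre. Finally, for any $y \in \H \setminus B$ we have $S_y \subseteq T$ by the definition of $B$, together with $T \subseteq S_y$ from the opening observation, so $S_y = T$, which is exactly the claimed description.

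I expect the genuine difficulty to have already been discharged in Lemma \ref{l:aisom} (and the structural results that $\H''$ is open dense and Lemma \ref{l:apath}); what remains here is essentially careful meagreness bookkeeping. The one point requiring care is that the countability conclusion only holds on $\H''$, so the comeagreness argument must separately absorb the ``boundary'' set $\H \setminus \H''$ via its nowhere denseness rather than via countability.
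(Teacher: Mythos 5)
Your proof is correct and follows essentially the same route as the paper: both arguments reduce the claim to Lemma \ref{l:aisom}, use the countability of $A_t \cap \H''$ over the countable exceptional set $W$ together with the open density of $\H''$ to see the bad set is meagre, and then read off $S_y = T$ from the trivial inclusion $T \subseteq S_y$. The only (harmless) difference is bookkeeping: the paper takes $y \in \H'' \setminus \bigcup_{t \in W} A_t$ directly, whereas you union over all non-isometry parameters and absorb $\H \setminus \H''$ separately as a nowhere dense set.
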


\begin{proof}
	Let $W \subset \mathbb{T}$ be the set defined in Lemma \ref{l:aisom}.
	If we define $A := \bigcup_{t \in W} A_t$ then $A \cap \H''$ is a countable set.
	Choose any $y \in \H'' \setminus A$ and note that $\H'' \setminus A$ is a comeagre subset of $\H$.
	As $S_y = \{ t \in \mathbb{T} : y \in A_t\}$ then $t \in S_y$ if and only if $r_z(\cdot, t)$ is an isometry.	
\end{proof}

\begin{proof}[Proof of Theorem \ref{l:sisom2}]
	This follows immediately from Lemma \ref{l:nots} and Lemma \ref{l:sisom}.
\end{proof}

\begin{proof}[Proof of Theorem \ref{l:srefl}]
	By redefining $S_y$ to be the set of points $t \in \mathbb{T}$ where for all $x \in \H$,
	\begin{align*}
		\| r_z (R_z(x),t) - r_z(R_z(y),t) \| = \| x-y\|.
	\end{align*}
	We note that the methods of Section \ref{sec:tech} can be adapted easily to prove the result.
\end{proof}

\section{Concluding remarks}
\label{sec:conc}

\noindent \textbf{1. Global rigidity.} We aim to use the results of this article to prove a characterisation of global rigidity in analytic normed planes. This will be analogous to the following key result in combinatorial rigidity theory for the Euclidean plane.

\begin{thm}\cite{C2005,hendrickson,J&J}\label{thm:euclidglobal}
	A generic bar-joint framework $(G,p)$ in the Euclidean plane is globally rigid if and only if $G$ is either a complete graph on at most 3 vertices or $G$ is 3-connected and redundantly rigid.
\end{thm}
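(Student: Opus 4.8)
The plan is to prove the two directions separately, treating the complete graphs on at most three vertices as a trivial special case (a single vertex, a single edge, and a triangle are each determined up to congruence by their edge lengths, yet none of $K_1,K_2,K_3$ is both $3$-connected and redundantly rigid), and to assume throughout the foundational fact, due to Connelly, that generic global rigidity is a property of the graph $G$ alone: either every generic placement is globally rigid, or none is. This lets us speak of ``$G$ being globally rigid'' unambiguously and reduces both directions to combinatorial statements about graphs on at least four vertices.

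For necessity I would establish Hendrickson's two conditions. For $3$-connectivity: if $G$ has a vertex cut $\{a,b\}$ of size two, write $G$ as the union of two subgraphs $G_1,G_2$ meeting exactly in $\{a,b\}$, each containing a vertex outside $\{a,b\}$. Reflecting the placement of $G_2$ in the line through $p_a,p_b$ is an isometry fixing $p_a,p_b$, so it preserves every edge length of $G$ and yields an equivalent framework; for generic $p$ this partial reflection is not the restriction of a global isometry, so the new framework is non-congruent and $G$ is not globally rigid. For redundant rigidity: if $G-e$ is flexible at a generic $p$ for some edge $e=uv$, then the configuration variety of $G-e$ modulo congruence is positive-dimensional and (after passing to a compact component) the squared length $\|q_u-q_v\|^2$ is a non-constant analytic function on it; for generic $p$ its value at $p$ is not extremal, hence is attained at a second point, giving an equivalent non-congruent framework. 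Thus a globally rigid graph on at least four vertices must be $3$-connected and redundantly rigid.

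For sufficiency the central tool is Connelly's stress-matrix criterion: a generic framework $(G,p)$ in $\mathbb{R}^d$ possessing an equilibrium stress matrix of maximal possible rank $|V|-d-1$ is globally rigid. The problem then becomes combinatorial: to show that every $3$-connected redundantly rigid graph admits, at a generic placement, a stress matrix of rank $|V|-3$. I would argue by induction following Jackson and Jordán. The base case is $K_4$, which carries a generic stress of rank $1=|V|-3$. For the inductive step one invokes the recursive structure of $3$-connected redundantly rigid graphs: such a graph can be reduced to a smaller graph of the same type either by deleting an edge or by performing a $1$-reduction (the reverse of a Henneberg $1$-extension) at a vertex of degree three. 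One then shows each operation can be reversed while preserving a maximal-rank stress, using stress-gluing and stress-extension lemmas (adding an edge can only increase the available stresses, and a $1$-extension can be equipped with a compatible stress that maintains the rank).

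The main obstacle I expect is the sufficiency direction, and within it two points deserve the most care. The first is tracking the rank of the equilibrium stress matrix through the inductive construction: a maximal-rank stress on the reduced graph must be lifted to one on the larger graph without dropping rank, and it is precisely here that the hypotheses are genuinely used ($3$-connectivity so that the cut constructions cannot be used to kill the stress, and redundant rigidity to guarantee that enough independent stresses exist). The second, more foundational, point is justifying that the stress-matrix criterion captures global rigidity generically at all: Connelly's theorem together with the result later completed by Gortler, Healy and Thurston, that the maximal stress rank is itself a generic invariant, so that exhibiting a single good placement certifies all generic placements. By contrast, the necessity direction is comparatively routine once the reflection and flexing constructions are set up.
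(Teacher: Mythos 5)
This theorem is quoted in the paper from Hendrickson, Connelly and Jackson--Jord\'an (\cite{hendrickson,C2005,J&J}); the paper offers no proof of its own, so there is no internal argument to compare yours against. Your outline is a faithful reconstruction of the standard proof from exactly those sources: Hendrickson's reflection-through-a-2-separator and flexing arguments for necessity, and Connelly's maximal-rank stress-matrix sufficiency condition driven by Jackson and Jord\'an's inductive reduction of $3$-connected redundantly rigid graphs to $K_4$ via edge deletions and $1$-reductions. Two small points to tighten: the non-constancy of $\|q_u-q_v\|^2$ on the configuration space of $G-e$ comes precisely from the rigidity of $G$, which you should first extract from its global rigidity; and the fact that generic global rigidity depends only on $G$ is due to Gortler--Healy--Thurston rather than Connelly, although in the plane the Hendrickson/Connelly/Jackson--Jord\'an combination yields the stated equivalence without invoking that general result.
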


Indeed we conjecture that in any analytic normed plane a completely regular bar-joint framework is globally rigid if and only if it is 2-connected and redundantly rigid. Note that it is relatively straightforward to deduce that 2-connectivity is a necessary condition and the fact that $H$ is globally rigid (Theorem \ref{thm:H}) shows that 3-connectivity is not necessary. However it is a substantial task to show that every 2-connected and redundantly rigid graph is globally rigid in an analytic normed plane. Note that, by Theorem \ref{thm:22normed} and \cite[Theorem 5.3]{NOP}, the combinatorial conditions of redundant rigidity and 2-connectivity are exactly the same as those that arise in the study of global rigidity for frameworks on the cylinder in $\mathbb{R}^3$ \cite{JNglobal}. This gives additional motivation for our small graph results since $K_5^-$ and $H$ are exactly the base graphs used in the recursive proof of that work.\\

\noindent \textbf{2. Normed planes.} We conclude by commenting on the generality our results are proved in. We expect that one can adapt our main global rigidity results to replace analytic normed plane with any smooth $\ell_q$-norm, $1<q<\infty$. The key technical step required to achieve this would be to adapt Lemma \ref{l:sisom2}. On the other hand to analyse global rigidity in $\ell_1$, $\ell_\infty$ or general polyhedral norms seems to require completely different techniques. See \cite{kit} for an analysis of rigidity in polyhedral norms in terms of coloured sub-frameworks. It would be interesting to extend that analysis to global rigidity. The reader unfamiliar with the standard Euclidean rigidity theory may wonder about higher dimensional normed spaces. Unfortunately it is a long-standing open problem to understand rigidity (and global rigidity) in $d$-dimensional Euclidean space. It is not obvious that other normed spaces would be any simpler.\\

\noindent \textbf{3. Tingley's problem.} 
Tingley's problem (originally proposed by Tingley in \cite{tingley}) asks the following question: given two Banach spaces $X,Y$ with unit spheres $S_X,S_Y$, will any surjective isometry $f : S_X \rightarrow S_Y$ extend to a linear isometry $\tilde{f} :X \rightarrow Y$?
The question seems to be difficult and is currently still unknown even in the case where $X$ is a normed plane and $Y=X$.
We note that it is actually sufficient in this case to prove the following:
for every normed plane $X$, there exists a globally rigid framework $(K_n,p)$ in $X$ for some $n \geq 5$ where $p_{v_0} = 0$ for some $v_0 \in V(K_n)$ and $\|p_v\|=1$ for all $v \in V \setminus \{v_0\}$.
The idea to prove this in the case where $X$ is an analytic normed plane would be to develop analogues of Theorems \ref{l:sisom2} and \ref{l:srefl} for rotations restricted to the unit circle. If this can be done, it should be relatively easy to adapt Lemma \ref{lem:k5-e} to prove that any surjective isometry $f : S_X \rightarrow S_X$ can be extended to a linear isometry when $X$ is an analytic normed plane.

\subsection*{Acknowledgements}
Parts of this research was completed during the Fields Institute Thematic Program on Geometric constraint
systems, framework rigidity, and distance geometry. SD was partially supported by Austrian Science Fund (FWF): P31888.
AN was partially supported by the Heilbronn institute for mathematical research.

\bibliographystyle{abbrv}

\begin{thebibliography}{40}
\bibitem{Brouwer12} L.E.J. Brouwer, \emph{Beweis der invarianz des n-dimensionalen gebiets}, Mathematische Annalen 71 (1912): 305-313.
%
\bibitem{con} R. Connelly, \emph{Rigidity and energy}, Inventiones, 66:1 (1982) 11--33

\bibitem{C2005} R. Connelly, \emph{Generic global rigidity}, Discrete and Computational Geometry, 33 (2005) 549-563.

\bibitem{CW} R. Connelly and W. Whiteley, \emph{Global rigidity: the effect of coning}, Discrete Comput. Geom. 43 4 (2010) 717--735.

\bibitem{clm} J.~Cook, J.~Lovett and F.~Morgan, {\em Rotation in a normed plane}, 
The American Mathematical Monthly 114:7  (2007) 628--632. 

\bibitem{dewphd} S. Dewar, {\em The rigidity of countable frameworks in normed spaces}, PhD thesis, Lancaster University (2019).

\bibitem{dew2} S. Dewar, {\em Infinitesimal rigidity in normed planes}, Siam J. Discrete Math. 34(2) (2020) 1205--1231.

\bibitem{D19} S. Dewar, \emph{Equivalence of continuous, local and infinitesimal rigidity in normed spaces}, Discrete Comput. Geom. 65 (2021), 655--679.
%
\bibitem{GHT} S. Gortler, A. Healy and D. Thurston, \emph{Characterizing generic global rigidity}, American Journal of Mathematics, 132:4 (2010) 897--939.

\bibitem{GTfields} S. Gortler and D. Thurston, \emph{Generic Global Rigidity in Complex and Pseudo-Euclidean Spaces}, in Rigidity and Symmetry, Editors: R. Connelly, A. Weiss and W. Whiteley, 2014.

\bibitem{hendrickson} B. Hendrickson, \emph{Conditions for unique graph realizations}, SIAM Journal of Computing, 21:1 (1992) 65-84.

\bibitem{J&J} B. Jackson and T. Jord\'{a}n, \emph{Connected Rigidity Matroids and Unique Realisations of Graphs}, Journal of Combinatorial Theory, Series B 94 (2005) 1--29.

\bibitem{JNglobal} B. Jackson and A. Nixon, \emph{Global rigidity of generic frameworks on the cylinder},  Journal of Combinatorial Theory, Series B, 139 (2019) 193-229

\bibitem{kit} D. Kitson, \emph{Finite and infinitesimal rigidity with polyhedral norms}, Discrete and Computational Geometry, 54:2 (2015) 390--411.

\bibitem{LK} D. Kitson and R. Levene, 
\emph{Graph rigidity for unitarily invariant matrix norms}, Journal of Mathematical Analysis and Applications, 491:2 (2020) 124353.

\bibitem{kit-pow-1} D. Kitson and S.C. Power, \emph{Infinitesimal rigidity for non-Euclidean bar-joint frameworks}, {\itshape Bulletin of the London Mathematical Society}, 46:4 (2014) 685--697.
%
\bibitem{kitschulze} D. Kitson and B. Schulze, \emph{Maxwell–Laman counts for bar-joint frameworks in normed spaces}, Linear Algebra Appl., 481 (2015) 313–329.
%
\bibitem{analytic} S. G. Krantz and H. R. Parks, {\em A primer of real analytic functions}, Birkh\"{a}user Verlag (1992).
%
\bibitem{laman} G. Laman, \emph{On graphs and rigidity of plane skeletal structures}, Journal of Engineering Mathematics, 4 (1970) 331--340.

\bibitem{manifold} J. E. Marsden, T. Raitu and R. Abraham, \emph{Manifolds, tensor analysis, and applications}, Third Edition, Springer-Verlag New York, 2002.

\bibitem{Mityagin15} B. Mityagin, \emph{The zero set of a real analytic function}, arXiv:1512.07276, 2015.

\bibitem{NOP} A. Nixon, J.C. Owen, and S.C. Power, 
\emph{Rigidity of frameworks supported on surfaces}, SIAM Journal on Discrete Mathematics. 26:4 (2012) 1733--1757.

\bibitem{NSW} A. Nixon, B. Schulze and W. Whiteley, \emph{Rigidity through a projective lens}, preprint, 2021.

\bibitem{NW} A. Nixon and W. Whiteley, \emph{Change of metrics in rigidity theory}, in Handbook of Geometric Constraint Systems Principles, eds. M. Sitharam, J. Sidman and A. St John, 2018, CRC Press.

\bibitem{PG} H. Pollaczek-Geiringer, \emph{Uber die Gliederung ebener Fachwerke}, Zeitschrift fur Angewandte Mathematik und Mechanik (ZAMM), 7 (1927) 58-72.

\bibitem{SalW} F. Saliola and W. Whiteley, \emph{Some notes on the equivalence of first-order rigidity in various geometries}, preprint, arXiv:0709.3354 (2007).

\bibitem{tingley} D. Tingley, \emph{Isometries of the unit sphere}, Geometriae Dedicata 22 (1987) 371--378.

\bibitem{thompson} A. Thompson, \emph{Minkowski Geometry (Encyclopedia of Mathematics and its Applications)}. Cambridge: Cambridge University Press. doi:10.1017/CBO9781107325845 (1996).
\end{thebibliography}
\def\lfhook#1{\setbox0=\hbox{#1}{\ooalign{\hidewidth
  \lower1.5ex\hbox{'}\hidewidth\crcr\unhbox0}}}

\end{document}